\definecolor{fgreen}{RGB}{44,144, 14}
\renewenvironment{proof}{{\bfseries Proof.}}{\qed}
\numberwithin{equation}{section} 
\newtheorem{theorem}{Theorem}[section] 
\newtheorem{proposition}[theorem]{Proposition} 
\newtheorem{corollary}[theorem]{Corollary} 
\newtheorem{lemma}[theorem]{Lemma} 
\theoremstyle{definition}
\newtheorem{definition}[theorem]{Definition} 
\newtheorem{remark}[theorem]{Remark} 
\newtheorem{example}[theorem]{Example}
\def\O{\mathbb O}
\def\C{\mathbb C}
\def\F{\mathbb F}
\def\d{\mathbf{ d}}
\def\N{\mathbb N}
\def\E{\mathbb E}
\def\d{\mathcal D}
\newcommand{\SL}{\mathrm{SL}}
\newcommand{\GL}{\mathrm{GL}}
\def\d{\mathcal D}
\def\C{\mathbb {C}}
\def\N{\mathbb {N}}
\def\O{\mathbb {O}}
\def\E{\mathbb {E}}
\def\F{\mathbb {F}}
\def\d{\mathbf {d}}
\def\GL{\rm GL}
\def\SL{\rm SL}
\newcommand{\defref}[1]{Definition~\ref{#1}}
\newcommand{\secref}[1]{Section~\ref{#1}}
\newcommand{\thmref}[1]{Theorem~\ref{#1}}
\newcommand{\lemref}[1]{Lemma~\ref{#1}}
\newcommand{\remref}[1]{Remark~\ref{#1}}
\newcommand{\propref}[1]{Proposition~\ref{#1}}
\begin{document}
\title[Strongly reversible classes in $\mathrm{SL}(n,\mathbb{C})$]{Strongly reversible classes in $\mathrm{SL}(n,\mathbb{C})$}

 \author[K. Gongopadhyay,    T. Lohan and  C. Maity]{Krishnendu Gongopadhyay, 
 Tejbir Lohan and Chandan Maity}

\address{Indian Institute of Science Education and Research (IISER) Mohali,
 Knowledge City,  Sector 81, S.A.S. Nagar 140306, Punjab, India}
\email{krishnendu@iisermohali.ac.in, krishnendug@gmail.com}

 \address{Indian Institute of Technology Kanpur, Kanpur 208016, Uttar Pradesh, India}
\email{tejbirlohan70@gmail.com}

\address{Indian Institute of Science Education and Research (IISER) Berhampur, Berhampur 760003, Odisha, India}
\email{maity.chandan1@gmail.com, cmaity@iiserbpr.ac.in}

\makeatletter
\@namedef{subjclassname@2020}{\textup{2020} Mathematics Subject Classification}
\makeatother

 \subjclass[2020]{Primary 20E45, 15A23; Secondary 15A21, 15A27}
\keywords{Reversibility, strongly reversible elements,  bireflectional elements, reversing symmetry group, Jordan canonical form, Weyr canonical form.}

\begin{abstract} 
An element of a group is called \textit{strongly reversible} or \textit{strongly real}  if it can be expressed as a product of two involutions. We provide necessary and sufficient conditions for an element of $\mathrm{SL}(n,\mathbb{C})$ to be a product of two involutions. In particular, we classify the strongly reversible conjugacy classes in $\mathrm{SL}(n,\mathbb{C})$.
\end{abstract}
\maketitle

\section{Introduction} 
Let $G$ be a group. An element $g \in G$ is called an involution if $g^2=e$. It is a question of general interest whether every element of $G$ can be expressed as a product of involutions; see \cite{GHR, Li, Pa1, Pa2, RV}. It is fascinating how, even in very large groups, the product of a small number of involutions suffices to express any arbitrary element; see \cite[p. 76]{OS}. For instance, in the special linear group ${\rm SL}(n, \C)$, consisting of all $n \times n$ matrices over $\C$ with determinant $1$, every element can be written as a product of four involutions; see \cite{KN}.

A related problem is to understand the product of two involutions in a group; see \cite{bm, OS}. This problem is well-understood in the context of general linear groups; see  \cite{W, Dj, HP, Go}. It follows that for a given field $\F$, an element of the general linear group $\mathrm{GL}(n,\F)$ can be written as a product of two involutions if and only if it is conjugate to its inverse in $\mathrm{GL}(n,\F)$. However, this does not hold for the special linear group,  e.g., the unipotent Jordan block in $ \mathrm{SL}(2,  \C)$. 

An element of a group $G$ is called {\it strongly reversible} if it can be expressed as a product of two involutions. Such elements are also known as {\it strongly real} or \textit{bireflectional} in the literature. It is easy to see that an element $g \in G$ is strongly reversible if and only if $g$ is conjugate to $g^{-1}$ by an involution in $G$. When the conjugating element of $g$ is not necessarily an involution, $g$ is known as a \emph{reversible} or \emph{real} element. A conjugacy class in $G$ is called \textit{reversible conjugacy class} if it contains a reversible element, and a \textit{strongly reversible conjugacy class} if it contains a strongly reversible element.
Most investigations in the literature have focused on understanding the equivalence between the reversible and strongly reversible conjugacy classes. For an elaborate exposition of this theme, we refer to the monograph \cite{OS}. In this article, we use the notion of reversibility to classify the product of two involutions in $\mathrm{SL}(n,\C)$. Such classification is known for the finite special linear group $ \mathrm{SL}(n, q)$; see \cite{GS}.

By strong reversibility in a group $G$, we mean a classification of strongly reversible elements in $G$. Recall that an element of $\mathrm{GL}(n,\C)$ is reversible if and only if it is strongly reversible; see \cite{W, OS}. A complete list of reversible elements in $\mathrm{GL}(n,\C)$ can be found in \cite[Section 4.2]{OS}. Note that if two matrices in $ \mathrm{SL}(n,\C)$ are conjugate by an element of  $\mathrm{GL}(n,\C)$, then by a suitable scaling, we can assume that both matrices are conjugate by an element of  $ \mathrm{SL}(n,\C)$. Therefore, the classification of reversible elements in $\mathrm{SL}(n,\C)$ follows from the corresponding classification in $\mathrm{GL}(n,\C)$ and is given by the following result. We refer to \lemref{lem-Jordan-M(n, C)} for the Jordan decomposition of matrices over $\C$.

\begin{theorem}  [{\cite[Theorem 4.2]{OS}}] \label{thm-reversible-SL(n, C)}  
An element $A \in  {\rm SL }(n,\C)$  is reversible if and only if the Jordan blocks in the Jordan decomposition of $A$ can be partitioned into singletons $\{\mathrm{J}(\mu, k )\}$ or pairs $ \{ \mathrm{J}(\lambda, m),\mathrm{J}(\lambda^{-1}, m)\} $,  where $ \mu, \lambda \in \C \setminus \{0\} $ such that $ \mu \in \{-1, + 1\}$ and $\lambda \notin \{- 1, +1\}$.
\end{theorem}

However, unlike $\mathrm{GL}(n,\C)$, there are reversible elements in $\mathrm{SL}(n,\C)$ that are not strongly reversible in $\mathrm{SL}(n,\C)$. Classifying strongly reversible elements in $\mathrm{SL}(n,\C)$ is a natural problem of interest; see \cite[p. 77]{OS}.
In \cite{ST}, the authors proved that if $n \not\equiv 2 \pmod{4}$, then every reversible element in $\mathrm{SL}(n,\C)$ is strongly reversible; see \cite[Theorem 3.1.1]{ST}. Recently, in \cite{GM}, the notion of adjoint reality was introduced, and the authors used this concept to classify the strongly reversible unipotent elements in $\mathrm{SL}(n,\C)$; see \cite[Theorem 4.6]{GM}. However, these classifications do not cover all strongly reversible elements in $\mathrm{SL}(n,\C)$. We are not aware of any literature that provides a complete list of strongly reversible elements in $\mathrm{SL}(n,\C)$. The aim of this paper is to offer a complete classification for the strongly reversible elements in $\mathrm{SL}(n,\C)$.

We now state the main result of this paper which classifies products of two involutions in $\mathrm{SL}(n,\mathbb{C})$.

\begin{theorem} \label{thr-main-strong-rev-SL(n, C)}
Let $A$ be a reversible element in $\mathrm{SL}(n,\C)$. Let $s$ denote the number of Jordan blocks of the form $\{\mathrm{J}(\mu, 4k+2)\}$, $\mu \in \{-1, + 1\}$, and $t$ denote the number of pairs of the form $ \{ \mathrm{J}(\lambda, 2m+1),\mathrm{J}(\lambda^{-1}, 2m+1)\} $, $ \lambda \notin \{- 1, +1\}$, in the Jordan decomposition of $A$. Then $A$ is strongly reversible in $\mathrm{SL}(n,\C)$ if and only if at least one of the following conditions holds.
\begin{enumerate}
\item \label{cond-1-thr-strong-SL(n, C)} There is a Jordan block $\mathrm{J}(\mu, 2r+1)$, $\mu \in \{-1, + 1\}$, of odd size in the Jordan decomposition of $A$.
\item\label{cond-2-thr-strong-SL(n, C)} $s + {t} \equiv 0 \pmod 2$.
\end{enumerate}
\end{theorem}

A key ingredient of our approach used in this paper is a description of the reversers. 
For a group $ G $, the {\it centraliser and reverser} of an element $g$ of $G$ are respectively defined as 
$$\mathcal{Z}_G(g): = \{ f \in G \mid fg = gf \}, \hbox{ and \ } \mathcal{R}_G(g) := \{ h \in G \mid hgh^{-1} = g^{-1} \}.$$
The set $\mathcal{R}_G(g)$ of reversers for a reversible element $g \in G$ is a right coset of the centraliser $\mathcal{Z}_G(g)$. Thus, the \textit{reversing symmetry group} or \textit{extended centraliser} $\mathcal{E}_G(g) := \mathcal{Z}_G(g) \cup \mathcal{R}_G(g)$ is a subgroup of $G$ in which $\mathcal{Z}_G(g)$ has an index of at most $2$.
Therefore, to find the reversing symmetry group $\mathcal{E}_G(g)$ of a reversible element $g \in G$, it is sufficient to specify one reverser of $g$ that is not in the centraliser; see \cite{BR}, \cite[Section 2.1.4]{OS} for more details.

The centraliser of each element in the group $\mathrm{GL}(n,\mathbb{C})$ has been well-studied in the literature; see \cite[Proposition 3.1.2]{COV}, \cite[Theorem 9.1.1]{GLR}. It follows that to find the reversing symmetry group $\mathcal{E}_{\mathrm{GL}(n,\mathbb{C})}(A)$ of an arbitrary reversible element $A \in \mathrm{GL}(n,\mathbb{C})$, it is sufficient to find a reverser for the Jordan form of $A$; see Section \ref{sec-rev-sym-group-GL(n, C)}.  In this paper, using some combinatorial identities, we have described a reverser for certain types of Jordan forms in ${\rm GL}(n,\C)$, which are summarised in Table \ref{table:1}. We refer to \defref{def-jordan} for the notation of the Jordan block used in Table \ref{table:1}. Moreover, we require the following notation to state such an explicit description of reversers.

\begin{definition}\label{def-special-matrix-omega}
For a non-zero  $\lambda \in \C$, define $\Omega( \lambda, n) := [ x_{i,j} ]_{ n \times n}  \in   \mathrm{GL}(n,\C)$ as follows
	\begin{enumerate}
		\item $x_{i,j} = 0$ for all $1\leq i,j \leq n$ such that $j<i$,
		\item  $x_{n,n} = 1$ and $x_{i,n} = 0  $   for  all $1\leq i \leq n-1$,
		\item  \label{cond-main-special-matrix-omega} For all $ 1\leq i \leq j \leq n-1 $,  define 
		\begin{equation} \label{eq-equvi-special-matrix-omega}
			x_{i,j} =   - \lambda^{-1} x_{i+1,j} - \lambda^{-2} x_{i+1,j+1}.
		\end{equation}
\end{enumerate}
\end{definition}

{\small\begin{table}[H]	
		\centering{
			\caption{Involutory reversing symmetries for Jordan forms in $ {\GL}(n,\C)$.}
			\begin{tabular}{|p{1.5cm}|p{8cm}|p{5cm}|}
				\hline
				\vspace{0.05cm} 	\textbf{ Sr No.  } &\vspace{0.05cm}  Jordan form &  \vspace{0.05cm}  Reversing involution \\
				\hline
				\vspace{0.05cm} \hspace{.44cm}	1 & \vspace{0.005cm} 	\textit{$\mathrm{J}(\mu,  \,  n)$,  $\mu \in \{ - 1, +1 \}$}  &\vspace{0.005cm}  $\Omega(\mu,  n)$ \vspace{0.15cm} \\
				\hline
				\vspace{.33cm}	\hspace{.55cm}2 &    \vspace{0.005cm}  $ \left( \begin{array}{cc}    \mathrm{J}(\lambda, n)  &  \\  & \mathrm{J}(\lambda^{-1}, n)  \end{array}\right) $,  	$ \lambda \in \C \setminus\{-1, 0, +1\} $ \vspace{0.005cm} &  \vspace{.005cm} $ \left( \begin{array}{cc}      & \Omega(\lambda,  n) \\
					\Big(	\Omega(\lambda,  n) \Big)^{-1} &   \end{array}\right) $ \vspace{.2cm} \\ 
				\hline
			\end{tabular}
			\label{table:1}}
\end{table}}

In view of \thmref{thm-reversible-SL(n, C)} and  \lemref{lem-Jordan-M(n, C)},  we can use Table \ref{table:1} to construct a suitable reverser for each reversible element of ${\rm GL }(n,\C)$, which is also an involution in ${\rm GL }(n,\C)$. It follows that every reversible element in ${\rm GL }(n,\C)$ is strongly reversible. This constructive proof of strong reversibility in ${\rm GL }(n,\C)$ may be of independent interest.

Another key tool used in this paper is the notion of the \textit{Weyr canonical form}; see \cite{Sh1, COV}. The centraliser (and hence the reverser) of a reversible element written in Weyr canonical form is a block upper triangular matrix, making it more suitable for our purposes than the corresponding Jordan canonical form; see  \secref{subsec-intro-Weyr-form} and \remref{rem-Weyr-form-significance} for more details.

Although we restrict ourselves to the field of complex numbers, the methods used in this paper may extend to classify strongly reversible elements in the special linear group over an algebraically closed field of characteristic not equal to $2$. It is worth mentioning that in \cite{PS}, the authors proved that an element $A$ of the general linear group $\mathrm{GL}(n,\F)$ over a field $\F$  is conjugate to $-A^{-1}$ if and only if $A$ can be expressed as a product of an involution and a skew-involution (i.e., an element $\sigma$ such that $\sigma^2=-\mathrm{I}_n$); see  \cite[Theorem 5]{PS}.

Finally, consider the subgroup $\mathrm{SL}(n, \C) \ltimes \C^n$ of the affine group $\mathrm{GL}(n, \C) \ltimes \C^n$. An element $g = (A, v) \in \mathrm{GL}(n, \C) \ltimes \C^n$ acts on $\C^n$ as an affine transformation:   $g(x) = A(x) + v$. In \cite{GLM}, the authors proved that an element $g = (A, v) \in \mathrm{GL}(n, \C) \ltimes \C^n$ is reversible (respectively, strongly reversible) if and only if $A$ is reversible (respectively, strongly reversible) in $\mathrm{GL}(n, \C)$; see \cite[Theorem 1.1]{GLM}. However, the situation in $\mathrm{SL}(n, \C) \ltimes \C^n$ is more subtle than in $\mathrm{GL}(n, \C) \ltimes \C^n$. There are elements in $\mathrm{SL}(n, \C) \ltimes \C^n$ that are not  reversible in $\mathrm{SL}(n, \C) \ltimes \C^n$, even though their  linear part is  reversible in $\mathrm{SL}(n, \C)$. For example, consider the affine transformation: $x \mapsto x+1$ for all $x \in \C^n$. Reversibility in $\mathrm{SL}(n, \C) \ltimes \C^n$ is intricately related to reversibility in $\mathrm{SL}(n, \C)$; see  \cite[Lemmas 3.2--3.4]{GLM}. The results of this article, together with those in \cite{GLM}, can be used to classify the reversible and strongly reversible elements in $\mathrm{SL}(n, \C) \ltimes \C^n$. We will investigate this in a subsequent work.

\textbf{Structure of the paper.}
In \secref{sec-prelim}, we fix some notation and recall background related to the Jordan and Weyr canonical forms. In \secref{sec-rev-sym-group-GL(n, C)}, we explore the reversing symmetry groups of certain Jordan forms in $\mathrm{SL}(n,\mathbb{C})$. In \secref{sec-str-rev-Jordan-forms}, we classify the strong reversibility of certain Jordan forms by analysing the structure of the corresponding reversing symmetry groups. We also classify strongly reversible semisimple elements in \secref{sec-str-rev-Jordan-forms}. In \secref{sec-str-rev-unipotent}, we classify strongly reversible unipotent elements using the notion of the Weyr canonical form. Finally, in \secref{sec-str-rev-main-result}, we prove \thmref{thr-main-strong-rev-SL(n, C)}.

\section{Preliminaries} \label{sec-prelim}
In this section, we will fix some notations and introduce the notion of the  Jordan and Weyr canonical forms in ${\rm M }(n,\C)$, the algebra of $n \times n$ complex matrices.

\subsection{Notation for partition of $n$}\label{subsec-notation-partition-d(n)} In this section, we will recall some notation for partitioning a positive integer $n$. 
\begin{definition}[cf.~{\cite{COV}}]\label{def-partition-dual} A \textit{partition} of  a positive integer $n$ is a finite sequence $(n_1,n_2,\dots, n_r)$ of  positive integers  such that  $n_1 + n_2 + \dots + n_r =n$ and $n_1 \geq n_2 \geq \dots \geq n_r \geq  1$. Moreover, the \textit{conjugate partition} (or \textit{dual partition})  of  the partition $(n_1,n_2,\dots, n_r)$ of $n$ is the partition $(m_1,m_2,\dots, m_{n_1})$ such that $m_j = | \{i \mid n_i \geq j\}|$.
\end{definition}

For every positive integer $n$, we can represent each of its partitions using a diagram known as a Young diagram. The Young diagram of a specific partition $(n_1,n_2,\dots, n_r)$ of $n$ consists of $n$ boxes arranged into $r$ rows, where the length of the $i$-th row is $n_i$. We can obtain the Young diagram corresponding to the conjugate partition of a given partition of $n$ by flipping the Young diagram of the given partition over its main diagonal from upper left to lower right. For example, the Young diagrams corresponding to the partition $(4,4,2)$ and its conjugate partition $(3,3,2,2)$ are given as follows, respectively.
$${\small 
	\begin{ytableau}
		{} & {} & {} & {} \\
		{} & {} &{}  & {}\\
		{}& {}
\end{ytableau}} \quad  \hbox{and}  \quad {\small  \begin{ytableau}
		{} & {} & {}  \\
		{} & {} & {}  \\
		{}& {} \\
		{}& {}
\end{ytableau}}$$

We recall an alternative notation for partitioning a positive integer 
$n$, as introduced in \cite[Section 3.3]{GM}.

\begin{definition}\label{def-special-partition-1}
A \textit{partition }of a positive integer $n$ is a list of the form $$ {\d}(n) := [ d_1^{t_{d_1}}, \dots,   d_s^{t_{d_s}} ],$$ where $t_{d_i}, d_i \in \N, 1\leq i\leq s, $ such that $ \sum_{i=1}^{s} t_{d_i} d_i = n, t_{d_i} \geq 1 $ and  $ d_1 >  \cdots > d_s > 0$. Moreover,  for a partition $\d (n)\, =\, [ d_1^{t_{d_1}},\, \ldots ,\, d_s^{t_{d_s}} ]$ of $n$, define  ${\N}_{\d(n)}  := \{ d_i \,\mid\, 1 \,\leq\, i \,\leq\, s \}, {\E}_{\d(n)}  := {\N}_{\d(n)}  \cap (2\N)$, $ {\O}_{\d(n)}  :=  {\N}_{\d(n)}\setminus {\E}_{\d(n)}$, and $ \E_{{\d}(n)}^2 := \{ \eta \in \E_{{\d}(n)} \mid \eta \equiv 2 \pmod  {4} \}$. Furthermore, note that $ \lvert \E_{{\d}(n)}^2 \lvert =   \sum_{\eta \in \E_{{\d}(n)}^2}  t_{\eta}.$
\end{definition}

We have introduced two notations $(n_1,n_2,\dots, n_r)$ and $ {\d}(n)$ for the partition of a positive integer $n$; see   \defref{def-partition-dual} and \defref{def-special-partition-1}. 
The following lemma provides the relationship between the partition $ {\d}(n)$ and its conjugate partition $ \overline{\d}(n)$. We omit the proof as it is straightforward.
\begin{lemma}\label{lem-relation-both-partition}
Let  $ {\d}(n) = [ d_1^{t_{d_1}}, \dots,   d_s^{t_{d_s}} ]$ be a partition of a positive integer $n$,   as defined  in \defref{def-special-partition-1}. Then  the conjugate partition $ \overline{\d}(n)$ of $ {\d}(n)$ has the following form
$$ \overline{\d}(n) = \Big[ (t_{d_1}+t_{d_2}+\cdots+t_{d_s})^{d_s},  (t_{d_1}+t_{d_2}+\cdots+t_{d_{s-1}})^{d_{s-1}-d_s}, \dots,  (t_{d_1}+t_{d_2})^{d_2 -d_3},(t_{d_1})^{d_1 -d_2}\Big].$$
\end{lemma}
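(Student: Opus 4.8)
The statement is purely combinatorial: given a partition $\d(n) = [d_1^{t_{d_1}}, \dots, d_s^{t_{d_s}}]$ written in the "exponential" notation of Definition \ref{def-special-partition_1}, I must identify its conjugate partition in the same notation. The plan is to translate both notations into the Young-diagram picture of Definition \ref{def-partition-dual} and read off the column lengths. First I would unpack $\d(n)$ as an ordinary weakly-decreasing sequence: it consists of $t_{d_1}$ copies of $d_1$, then $t_{d_2}$ copies of $d_2$, and so on down to $t_{d_s}$ copies of $d_s$ (using $d_1 > d_2 > \cdots > d_s$). Its Young diagram therefore has $t_{d_1} + \cdots + t_{d_s}$ rows, the first $t_{d_1}$ of length $d_1$, the next $t_{d_2}$ of length $d_2$, etc.

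Next I would compute the conjugate via the formula $m_j = |\{i : n_i \geq j\}|$, i.e.\ the length of column $j$. The rows of length $\geq j$ are exactly those rows whose index corresponds to a part $d_i \geq j$. Since the parts are bucketed, $m_j = \sum_{i : d_i \geq j} t_{d_i}$. Now as $j$ runs over $1, 2, \dots, d_s$ we have $d_i \geq j$ for all $i$, giving $m_j = t_{d_1} + \cdots + t_{d_s}$; for $j$ running over $d_s+1, \dots, d_{s-1}$ the part $d_s$ drops out, giving $m_j = t_{d_1} + \cdots + t_{d_{s-1}}$; and so on, until $j$ in the range $d_2+1, \dots, d_1$ gives $m_j = t_{d_1}$. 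Counting how many values of $j$ fall in each range — namely $d_s$, then $d_{s-1} - d_s$, then $d_{s-2} - d_{s-1}$, \dots, then $d_1 - d_2$ — yields exactly the claimed exponential expression for $\overline{\d}(n)$, with the value $(t_{d_1} + \cdots + t_{d_{s-k}})$ appearing with multiplicity $d_{s-k} - d_{s-k+1}$.

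Finally I would note the edge cases for completeness: the largest column index is $d_1 = n_1$, consistent with the conjugate having $d_1$ parts; and the displayed list in the lemma is already written in strictly-decreasing-base, weakly-structured form, so one should observe that whenever two consecutive exponents $d_{i} - d_{i+1}$ vanish the corresponding block is simply absent (this cannot actually happen here since the $d_i$ are strictly decreasing, so every exponent $d_{i-1} - d_i$ is a positive integer, matching Definition \ref{def-special-partition_1}). There is essentially no obstacle here; the only mild care needed is bookkeeping the half-open ranges of $j$ correctly so that the multiplicities come out as differences of consecutive $d_i$'s rather than being off by one. This is exactly why the authors call the proof "straightforward" and omit it.
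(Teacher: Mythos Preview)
Your proposal is correct and is precisely the straightforward Young-diagram/column-counting argument the authors have in mind; the paper in fact omits the proof entirely, so there is nothing further to compare against. One small point you could make explicit for completeness is that the bases $t_{d_1}+\cdots+t_{d_{s-k}}$ are strictly decreasing in $k$ (since each $t_{d_i}\geq 1$), so the displayed expression for $\overline{\d}(n)$ genuinely conforms to the exponential notation of Definition~\ref{def-special-partition_1}.
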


\subsection{Block Matrices} \label{subsec-notation-block-matrix}
We can partition a matrix $A \in  \mathrm{M}(n,\C)$ by choosing a horizontal partitioning of the rows and an independent vertical partitioning of the columns. When the same partitioning is used for both the rows and columns, we refer to the resulting partitioned matrix as a \textit{block matrix} or a \textit{blocked matrix}. For example: {\small $$A =
	\left( {\begin{array}{ccc|cc}
			2 & 0 & 0& 1 &0  \\
			0 & 2 & 0& 0 &1 \\
			0 & 0 & 2 & 0 &0\\
			\hline
			0 & 0 & 0& 2&0\\
			0& 0 & 0& 0&2 
	\end{array} } \right) = \left( {\begin{array}{c|c}
			A_{1,1}& A_{1,2}  \\
			\hline
			A_{2,1}& A_{2,2}
	\end{array} } \right) = (A_{i,j})_{1\leq i,j \leq 2} \in  \mathrm{M}(5,\C).$$}

Note that the diagonal blocks $A_{i, i}$ in the block matrix $A = (A_{i,j})_{1\leq i,j \leq m}$ are all square sub-matrices. Moreover, when specifying the block structure of matrix $A$, it is sufficient to specify only the sizes of the diagonal blocks $A_{i, i}$ since the $(i,j)$-th block $A_{i,j}$ must be a $n_i \times n_j$ matrix, where $n_i$ and $n_j$ are the sizes of the diagonal blocks $A_{i, i}$ and $A_{j,j}$, respectively. Therefore, if the diagonal blocks of $A$ have decreasing size, we can uniquely specify the whole block structure of $A$ by a partition $(n_1,n_2,\dots, n_r)$ of  $n$  such that  $n_1 + n_2 + \dots + n_r =n$ and $n_1 \geq n_2 \geq \dots \geq n_r \geq  1$.
We refer to block matrix $A$  as  \textit{block upper triangular} if $A_{i,j}=0$ for all $i > j$. If all the non-diagonal blocks of a block matrix $A$ are zero matrices,  then $A$  is called a \textit{block diagonal} matrix; see  \cite[Section 1.2]{COV} for more details.

The symbol $\mathrm{I}_r$ represents the $r \times r$ identity matrix. When $s>r$, we use the notation $\mathrm{I}_{s,r}$ to denote an $s \times r$ matrix, where the first $r$ rows form the identity matrix $\mathrm{I}_r$, and the remaining $(s-r)$ rows consist entirely of zeros. For example, $\mathrm{I}_{3,2} = \begin{psmallmatrix} 1 & 0 \\ 0&1\\ 0&0 \end{psmallmatrix}$.

\subsection{Jordan canonical form}\label{sec-Jordan} In this section, we recall the notion of the Jordan canonical form in  $\mathrm{M}(n,\C)$; see {\cite[p. 39]{COV}}, {\cite[Section 2.2]{GLR}} for more details.

\begin{definition}[{\cite[p.  52]{GLR}}]\label{def-jordan}  
A {\it Jordan block} $\mathrm{J}(\lambda,m)$ is an $m \times m$ $(m>1)$ matrix with $ \lambda \in \C$ on the diagonal entries, $1$ on all of the super-diagonal entries and $0$ elsewhere. For $m=1$, $\mathrm{J}(\lambda,1) :=(\lambda)$. We will refer to a block diagonal matrix where each diagonal block is a Jordan block as  \textit{Jordan form}. 
\end{definition}

\begin{lemma}[{Jordan  form in $ \mathrm{M}(n,\C)$, 	 \cite[Theorem 2.2.1]{GLR}}] \label{lem-Jordan-M(n, C)}
For every $A \in  \mathrm{M}(n,\C)$,  there is an invertible matrix $S \in  \mathrm{GL}(n,\C)$ such that 
	\begin{equation} \label{equ-Jordan-M(n,C)}
		SAS^{-1} =  \mathrm{J}(\lambda_1,  \,  m_1) \oplus  \cdots \oplus  \mathrm{J}(\lambda_k, \, m_k),
	\end{equation}
	where  $ \lambda_1,  \dots,  \lambda_k $ are complex numbers (not necessarily distinct).
	The form  (\ref{equ-Jordan-M(n,C)}) is uniquely determined by $A$ up to a permutation of Jordan blocks.
\end{lemma}

The Jordan structure, which lists the sizes of the diagonal blocks in the Jordan form of a matrix, is defined as follows.

\begin{definition} [{\cite[p.  39]{COV}}] \label{def-jordan-structure-partition}
Suppose $A \in \mathrm{M}(n,\C)$ is   similar to the Jordan form $\mathrm{J}(\lambda,    n_1) \oplus  \cdots \oplus  \mathrm{J}(\lambda, n_r)$ such that $n_1 + n_2 + \dots + n_r =n$ and $n_1 \geq n_2 \geq \dots \geq n_r \geq  1$. Then the partition $(n_1,n_2,\dots, n_r)$ of $n$  is called the \textit{Jordan structure}  of $A$. The Jordan structure is also known as the \textit{Segre characteristic} in the literature; see \cite{Sh2}.
\end{definition}

\subsection{Weyr canonical form} \label{subsec-intro-Weyr-form}
In this section, we recall the notion of the Weyr canonical form introduced by the Czech mathematician Eduard Weyr in 1885. The Weyr form is preferred over the Jordan form when dealing with problems concerning matrix centralisers. We refer to \cite{Sh1, COV, Sh2} for a detailed discussion on the theory of  Weyr canonical forms.

\begin{definition} [{\cite[Definition 2.1.1]{COV}}]\label{def-basic-Weyr-block}
A \textit{basic Weyr matrix} with eigenvalue $\lambda$ is a matrix $W \in  \mathrm{M}(n,\C)$ of the following form: There exists a partition $(n_1,n_2, \dots,n_r)$ of $n$ such that, when $W$ is viewed as an $r \times r$ blocked matrix $(W_{ij})_{1 \leq i,j \leq r}$, where the $(i,j)$-th block $W_{ij}$ is an $ n_i \times n_j$ matrix, the following three features hold.
	\begin{enumerate}
\item The main diagonal blocks  $W_{i,i}$  are the $ n_i \times n_i$ scalar matrices $\lambda \mathrm{I}_{n_i} $ for all $1 \leq i\leq r$.
\item The first super-diagonal blocks $W_{i,{i+1}}$ are the $ n_i \times n_{i+1}$ matrices in reduced row-echelon of rank $n_{i+1}$ (i.e., $W_{i,i+1} = \mathrm{I}_{n_{i}, n_{i+1}}$ ) for all $1 \leq i\leq r$.
\item All other blocks of $W$ are zero matrices (i.e., $W_{ij}=0$ when $j \neq i, i+1)$.
\end{enumerate}
In this case, we say that $W$ has the \textit{Weyr structure} $(n_1,n_2, \dots,n_r)$.
\end{definition}

In other words, a basic Weyr matrix is a block upper triangular matrix where the diagonal blocks are scalar matrices (i.e., scalar multiples of identity matrices). The super-diagonal blocks consist of identity matrices augmented by rows of zeros, and all the other blocks are zero matrices. An $n \times n$ scalar matrix can be viewed as a basic Weyr matrix with the trivial Weyr structure $(n)$. On the other hand, Jordan blocks are basic Weyr matrices with the Weyr structure $(1, 1, 1, \dots, 1)$.

\begin{definition}[{\cite[Definition 2.1.5]{COV}}]\label{def-general-Weyr-matrix}
Let $W \in  \mathrm{M}(n,\C)$, and let $\lambda_1, \lambda_2,\dots, \lambda_k$ be the distinct eigenvalues of $W$. We say that $W$ is in Weyr form (or is a Weyr matrix) if it is a direct sum of basic Weyr matrices, one for each distinct eigenvalue. In other words, $W$ has the following form:
$$W = W_1 \oplus W_2 \oplus \dots \oplus W_k,$$
where $W_i $ is a basic Weyr matrix with eigenvalue  $\lambda_{i}$ for all $1 \leq i\leq k$.
\end{definition}

\begin{theorem}[{\cite[Theorem 2.2.4]{COV}}]\label{thm-Weyr-canonical-form}
Up to a permutation of the basic Weyr blocks, each square matrix $A \in \mathrm{M}(n,\C)$ is similar to a unique Weyr matrix $W$. The matrix $W$ is called the Weyr (canonical) form of $A$.
\end{theorem}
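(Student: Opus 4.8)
The plan is to reduce to the case of a single nilpotent eigenvalue and then transport the block structure from the Jordan form, which is already available through \lemref{lem-Jordan-M(n, F)}; uniqueness will follow by expressing the Weyr structure in terms of the ranks of powers of $A-\lambda\mathrm{I}$, which are manifestly similarity invariants. \emph{Reduction:} First I would invoke the primary decomposition. If $\lambda_1,\dots,\lambda_k$ are the distinct eigenvalues of $A$, then $\C^n=\bigoplus_{i=1}^k V_i$ with $V_i:=\ker(A-\lambda_i\mathrm{I})^n$ each $A$-invariant, and $A|_{V_i}$ has $\lambda_i$ as its only eigenvalue. Since a direct sum of basic Weyr matrices (one per $\lambda_i$) is precisely a Weyr matrix by \defref{def-general-Weyr-matrix}, it suffices to treat a matrix with a single eigenvalue $\lambda$; replacing it by $A-\lambda\mathrm{I}$, I may assume it is nilpotent, say $N$.

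\emph{Existence:} By \lemref{lem-Jordan-M(n, F)}, $N$ is similar to $\mathrm{J}(0,e_1)\oplus\cdots\oplus\mathrm{J}(0,e_m)$ with $e_1\ge\cdots\ge e_m\ge 1$. I would fix a corresponding Jordan basis and arrange its vectors in a left-justified array of $m$ columns, the Jordan chain of the $p$-th block occupying column $p$ (so there is an entry in row $j$, column $p$ exactly when $e_p\ge j$); reading the array column by column recovers the Jordan ordering of the basis. Then I would re-index the basis by reading the array \emph{row by row}, and set $n_j:=|\{p:e_p\ge j\}|$, so that $(n_1,\dots,n_r)$ with $r=e_1$ is the conjugate partition (\defref{def-partition-dual}) of $(e_1,\dots,e_m)$. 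The claim is that in this reordered basis $N$ is the basic Weyr matrix of structure $(n_1,\dots,n_r)$: indeed $N$ sends the row-$j$ vector of column $p$ to the row-$(j{-}1)$ vector of column $p$ for $j\ge 2$ and to $0$ for $j=1$; the $j$-th row of the array spans the $j$-th Weyr block (of dimension $n_j$), and since $\{p:e_p\ge j{+}1\}$ is an initial segment of $\{p:e_p\ge j\}$, the matrix of $N$ from the $(j{+}1)$-th block into the $j$-th block is exactly $\mathrm{I}_{n_j,n_{j+1}}$, the diagonal blocks are $0\cdot\mathrm{I}_{n_j}$, and all remaining blocks vanish, matching \defref{def-basic-Weyr-block}. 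Assembling the direct sum over $i$ of these basic Weyr matrices yields a Weyr matrix similar to $A$.

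\emph{Uniqueness and the main obstacle:} Let $W=W_1\oplus\cdots\oplus W_k$ be any Weyr matrix similar to $A$. Since similarity preserves the spectrum, after permuting the basic blocks I may assume $W_i$ has eigenvalue $\lambda_i$, with Weyr structure $(n_1^{(i)},n_2^{(i)},\dots)$. Directly from \defref{def-basic-Weyr-block} one computes $\mathrm{rank}\big((W_i-\lambda_i\mathrm{I})^t\big)=\sum_{j>t}n_j^{(i)}$ for every $t\ge 0$, and since $W-\lambda_i\mathrm{I}$ acts invertibly on every other basic block, $\mathrm{rank}\big((W-\lambda_i\mathrm{I})^t\big)$ differs from this by a constant independent of $t$; hence
\[
n_t^{(i)}=\mathrm{rank}\big((W-\lambda_i\mathrm{I})^{t-1}\big)-\mathrm{rank}\big((W-\lambda_i\mathrm{I})^{t}\big),
\]
which depends only on the similarity class of $A$. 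Thus every $n_t^{(i)}$, and therefore $W$ itself, is determined by $A$ up to a permutation of its basic Weyr blocks. The one step requiring genuine care is the verification, in the existence part, that reading a Jordan basis row by row produces exactly the reduced-row-echelon blocks $\mathrm{I}_{n_j,n_{j+1}}$ on the first super-diagonal (rather than some other placement of identity rows); this is a finite combinatorial bookkeeping check resting on the nesting of the index sets $\{p:e_p\ge j\}$. Everything else — primary decomposition and the invariance of ranks of powers — is routine.
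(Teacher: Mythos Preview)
The paper does not supply its own proof of this theorem; it is quoted as a background result from \cite[Theorem~2.2.4]{COV}. Your argument is correct and is essentially the standard one found there: reduce to a single eigenvalue via the primary decomposition, obtain existence by reordering a Jordan basis so that chains are read ``by rows'' (this is exactly the permutation transformation referred to in \thmref{thm-relation-Weyr-Jordan-form}, and your conjugate-partition bookkeeping is the right way to see the $\mathrm{I}_{n_j,n_{j+1}}$ blocks appear), and deduce uniqueness from the similarity-invariant rank formula $n_t=\mathrm{rank}\big((A-\lambda\mathrm{I})^{t-1}\big)-\mathrm{rank}\big((A-\lambda\mathrm{I})^{t}\big)$.
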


In the Jordan form, a matrix with a single eigenvalue can be expressed as a direct sum of several Jordan blocks, each corresponding to that eigenvalue. In contrast, the Weyr form of a matrix with a single eigenvalue is simply the corresponding basic Weyr matrix for that eigenvalue. This highlights a significant difference between the Jordan and Weyr forms of a matrix. In this article, we will use the notation $A_W$ to denote the Weyr form corresponding to $A \in \mathrm{M}(n,\C)$. The following result recalls the centraliser of a basic Weyr matrix in $ \mathrm{M}(n,\C)$. 
\begin{proposition}[{\cite[Proposition 2.3.3]{COV}}]\label{prop-centralizer-basic-Weyr-block}
Let $W \in \mathrm{M}(n,\C)$ be an $n \times n$ basic Weyr matrix with the Weyr structure $(n_1,\dots, n_r)$, where $ r \geq 2$. Let $K$ be an $n 
	\times n$ matrix, blocked according to the partition $(n_1,\dots, n_r)$,
	and let $K_{i,j}$ denote its $(i, j)$-th block of size $n_i \times n_j$ for all $1 \leq i, j\leq r$. Then $W$ and
	$K$ commute if and only if $K$ is a block upper triangular matrix such that
	$$ K_{i,j}=  \begin{psmallmatrix}
		K_{i+1,j+1} & 
		\ast  \\
		0	&  \ast
	\end{psmallmatrix} \hbox{ for all } 1 \leq i \leq j \leq r-1.$$
Here, $K_{i,j}$ is written as a block matrix where the zero denotes the zero matrix of size $(n_i - n_{i+1}) \times n_{j+1}$. The asterisk entries $(\ast)$ indicate no restrictions on the entries in that part of the matrix. The column of asterisks disappears if $n_j = n_{j+1}$, and the row $\begin{psmallmatrix}
		0	&  \ast
	\end{psmallmatrix}$  disappears if $n_i = n_{i+1}$.
\end{proposition}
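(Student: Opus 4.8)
The plan is to reduce to the nilpotent case and then establish the equivalence by comparing block entries of $WK$ and $KW$ directly. Since $W$ is a basic Weyr matrix with the single eigenvalue $\lambda$, write $W = \lambda\mathrm{I}_n + N$, where $N$ is the basic Weyr matrix with the same Weyr structure $(n_1,\dots,n_r)$ but eigenvalue $0$; thus $N_{i,i+1} = \mathrm{I}_{n_i,n_{i+1}}$ for $1 \le i \le r-1$ and all other blocks of $N$ vanish. Because $WK - KW = NK - KN$, it suffices to treat $W = N$. By the block multiplication rule, the $(i,j)$-block of $NK$ is $\mathrm{I}_{n_i,n_{i+1}}K_{i+1,j}$ when $i \le r-1$ and is $0$ when $i = r$; here $\mathrm{I}_{n_i,n_{i+1}}K_{i+1,j}$ is simply $K_{i+1,j}$ with $n_i - n_{i+1}$ rows of zeros appended at the bottom (note $n_i \ge n_{i+1}$). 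Dually, the $(i,j)$-block of $KN$ is $K_{i,j-1}\mathrm{I}_{n_{j-1},n_j}$ when $j \ge 2$ and is $0$ when $j = 1$; here $K_{i,j-1}\mathrm{I}_{n_{j-1},n_j}$ is the matrix formed by the first $n_j$ columns of $K_{i,j-1}$. Hence $NK = KN$ is equivalent to the system
\[
\mathrm{I}_{n_i,n_{i+1}}\,K_{i+1,j} \;=\; K_{i,j-1}\,\mathrm{I}_{n_{j-1},n_j}, \qquad 1 \le i,j \le r,
\]
with the convention that the left side reads $0$ when $i=r$ and the right side reads $0$ when $j=1$.

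I would first deduce that $K$ is block upper triangular ($K_{i,j}=0$ for $i>j$), by induction on the column index $j$. The essential point is that $\mathrm{I}_{n_i,n_{i+1}}$ has full column rank $n_{i+1}$, hence is injective, so $\mathrm{I}_{n_i,n_{i+1}}X = 0$ forces $X = 0$. For $j = 1$ the identity at $(i,1)$ with $i \le r-1$ reads $\mathrm{I}_{n_i,n_{i+1}}K_{i+1,1} = 0$, so $K_{i+1,1} = 0$ for all $1 \le i \le r-1$, i.e.\ $K_{2,1} = \cdots = K_{r,1} = 0$. For the inductive step, assume $K_{a,j-1} = 0$ for every $a > j-1$; given $i > j$, set $a = i-1 \ge j > j-1$, and the identity at $(a,j)$ yields $\mathrm{I}_{n_a,n_{a+1}}K_{a+1,j} = K_{a,j-1}\mathrm{I}_{n_{j-1},n_j} = 0$, whence $K_{i,j} = 0$.

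Next, for $1 \le i \le j \le r-1$ I would extract the super-diagonal shape from the identity at $(i,j+1)$, namely $\mathrm{I}_{n_i,n_{i+1}}K_{i+1,j+1} = K_{i,j}\mathrm{I}_{n_j,n_{j+1}}$. The left side is $K_{i+1,j+1}$ with an $(n_i - n_{i+1}) \times n_{j+1}$ zero block appended below it, and the right side is the submatrix of $K_{i,j}$ given by its first $n_{j+1}$ columns. Partitioning $K_{i,j}$ with row sizes $n_{i+1}, n_i - n_{i+1}$ and column sizes $n_{j+1}, n_j - n_{j+1}$, the comparison says precisely that its top-left block equals $K_{i+1,j+1}$ and its bottom-left block is zero — exactly the claimed form — and the degenerate cases $n_i = n_{i+1}$ and $n_j = n_{j+1}$ correspond to the vanishing of the stated row, resp.\ column, of asterisks. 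For the converse direction, if $K$ is block upper triangular and meets these super-diagonal constraints, I would recover the whole system by reversing the computation: each identity with $i < j$ is the constraint at $(i,j-1)$ rewritten, while for $i \ge j$ — and for the boundary rows $i = r$ and columns $j = 1$ — all blocks appearing lie strictly below the diagonal and thus vanish, so both sides are zero. This gives $NK = KN$ and completes the equivalence.

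I expect essentially no serious obstacle: the proof is bookkeeping. The two places to be careful are keeping the elementary block operations straight — left multiplication by $N$ pushes blocks up and pads with zero rows, right multiplication by $N$ pushes blocks left and truncates columns — and choosing to induct on columns so that the full-column-rank (injectivity) of $\mathrm{I}_{n_i,n_{i+1}}$ can be applied cleanly; the size-degeneracies $n_i = n_{i+1}$, $n_j = n_{j+1}$ then require no separate treatment.
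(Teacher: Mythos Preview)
Your argument is correct. Note, however, that the paper does not supply its own proof of this proposition: it is quoted verbatim from \cite[Proposition~2.3.3]{COV} and used as a black box, so there is nothing in the paper to compare your proof against. Your approach---reducing to the nilpotent part $N$, reading $NK=KN$ blockwise as $\mathrm{I}_{n_i,n_{i+1}}K_{i+1,j}=K_{i,j-1}\mathrm{I}_{n_{j-1},n_j}$, using injectivity of $\mathrm{I}_{n_i,n_{i+1}}$ to force block upper-triangularity by induction on the column index, and then unpacking the identity at $(i,j+1)$ to obtain the stated $2\times 2$ block shape of $K_{i,j}$---is exactly the standard computation one finds in \cite{COV}, and the converse verification is handled correctly. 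The only cosmetic point: in your final paragraph the phrase ``pushes blocks up'' for left multiplication by $N$ would be more accurately described as shifting the block-row index up (equivalently, reading off the block one row below), but your formulas make clear what is meant.
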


\subsection{Duality between the Jordan and Weyr Forms}
In this section, we will recall the duality between the Jordan and Weyr canonical forms.
Each partition $(n_1 , n_2 ,\dots, n_r)$ of $n$ determines a Young diagram.
The Weyr structure $(m_1,m_2,\dots,m_{n_1})$ is the conjugate partition of the Jordan structure $(n_1,n_2,\dots,n_r)$. Therefore, by transposing the Young diagram  (writing its columns as rows) of partition $(n_1, n_2,\dots, n_r)$, we get a Young diagram that corresponds to the conjugate partition $(m_1,m_2,\dots,m_{n_1})$ of $(n_1,n_2,\dots,n_r)$.  More precisely, $m_j$ is the number of $n_i$'s greater than or equal to $j$; see \defref{def-partition-dual}. Moreover, it is worth noting that if ${\d}(n)$ is a partition corresponding to the Jordan structure $(n_1,n_2,\dots,n_r)$, then the corresponding Weyr structure $(m_1,m_2,\dots,m_{n_1})$ can also be represented  by  the conjugate  partition $\overline{\d}(n)$; see \lemref{lem-relation-both-partition}.

The following result establishes the duality between Jordan and Weyr structures of complex matrices.

\begin{theorem}[{\cite[Theorem 2.4.1]{COV}}]\label{thm-relation-Weyr-Jordan-form}
The Weyr and Jordan structures of a nilpotent $n \times n$ matrix A (or a matrix with a single eigenvalue) correspond to partitions of $n$ that are conjugate (or dual) to each other. Furthermore, the Weyr and Jordan forms of a square matrix are conjugate to each other by a permutation matrix.
\end{theorem}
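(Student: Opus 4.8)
The plan is to reduce to the case of a single eigenvalue, then exhibit the permutation explicitly on a Jordan basis and simply read off the resulting matrix.

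First I would treat a matrix $A\in\mathrm{M}(n,\C)$ with a single eigenvalue $\lambda$. Replacing $A$ by $A-\lambda\mathrm{I}_n$ I may assume $A$ is nilpotent, and by \lemref{lem-Jordan-M(n, F)} I may further assume $A$ equals a Jordan form $N=\mathrm{J}(0,n_1)\oplus\cdots\oplus\mathrm{J}(0,n_r)$ with $n_1\geq n_2\geq\cdots\geq n_r\geq1$. Write $\{e^{(i)}_1,\dots,e^{(i)}_{n_i}\}$ for the standard basis of the $i$-th block, so that $N e^{(i)}_1=0$ and $N e^{(i)}_k=e^{(i)}_{k-1}$ for $2\leq k\leq n_i$. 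For $1\leq j\leq n_1$ put $m_j:=|\{\,i: n_i\geq j\,\}|$; since the $n_i$ are weakly decreasing one has $\{\,i:n_i\geq j\,\}=\{1,2,\dots,m_j\}$ and $m_1\geq m_2\geq\cdots\geq m_{n_1}$, and by \defref{def-partition-dual} the sequence $(m_1,\dots,m_{n_1})$ is exactly the conjugate partition $\overline{\d}(n)$ of the Jordan structure. Now reorder the basis level by level: for each $j$ take the vectors $e^{(1)}_j,e^{(2)}_j,\dots,e^{(m_j)}_j$ (in that order) as the $j$-th block of $m_j$ vectors, and relabel the $a$-th of them as $f^{(j)}_a:=e^{(a)}_j$.

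The key — and, I expect, short — computation is that in the basis $\{f^{(j)}_a\}$ the operator $N$ acts by $N f^{(1)}_a=0$ and $N f^{(j)}_a=f^{(j-1)}_a$ for $2\leq j\leq n_1$ and $1\leq a\leq m_j$; both identities are immediate from $N e^{(a)}_j=e^{(a)}_{j-1}$ together with $a\leq m_j\leq m_{j-1}$. Reading this off blockwise for the partition $(m_1,\dots,m_{n_1})$: the diagonal blocks are all $0\cdot\mathrm{I}_{m_j}$, the block in position $(j-1,j)$ is $\mathrm{I}_{m_{j-1},m_j}$ (an $m_{j-1}\times m_j$ identity matrix possibly augmented by zero rows, hence in reduced row-echelon form), and every other block is zero. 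Thus $N$ in the new basis is precisely a basic Weyr matrix with eigenvalue $0$ and Weyr structure $(m_1,\dots,m_{n_1})$ in the sense of \defref{def-basic-Weyr-block}, so $A=\lambda\mathrm{I}_n+N$ becomes the basic Weyr matrix with eigenvalue $\lambda$ and that structure. Since the change of basis only permutes the $f^{(j)}_a$ among the $e^{(i)}_k$, it is given by a permutation matrix. This settles the single-eigenvalue case: the Weyr structure is the conjugate $\overline{\d}(n)$ of the Jordan structure, and the Jordan and Weyr forms are conjugate by a permutation. (If one wants the uniqueness of the Weyr structure on its own, it also follows from the similarity invariance of $m_j=\dim\ker N^j-\dim\ker N^{j-1}$.)

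For the general case I would split along the distinct eigenvalues $\lambda_1,\dots,\lambda_k$ of $A$. By \lemref{lem-Jordan-M(n, F)} the Jordan form is $J=J_{\lambda_1}\oplus\cdots\oplus J_{\lambda_k}$, where $J_{\lambda_t}$ collects the Jordan blocks with eigenvalue $\lambda_t$; applying the previous step to each summand gives a permutation matrix $P_t$ such that $P_t J_{\lambda_t} P_t^{-1}$ is a basic Weyr matrix with eigenvalue $\lambda_t$. Then $P:=P_1\oplus\cdots\oplus P_k$ is a permutation matrix conjugating $J$ to $W_1\oplus\cdots\oplus W_k$, which is a Weyr matrix by \defref{def-general-Weyr-matrix}, and \thmref{thm-Weyr-canonical-form} identifies it as the Weyr form of $A$ up to a permutation of basic Weyr blocks. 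The only real obstacle is the index bookkeeping in the single-eigenvalue step — checking that the level-by-level relabeling is a bijection of basis vectors and that each super-diagonal block is genuinely in reduced row-echelon form — and this is exactly the place where the monotonicity $n_1\geq\cdots\geq n_r$ (equivalently $m_1\geq\cdots\geq m_{n_1}$) gets used.
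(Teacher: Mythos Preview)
The paper does not give its own proof of this theorem: it is quoted with a ``cf.~\cite[Theorem 2.4.1]{COV}'' and used as background, so there is nothing in the paper to compare your argument against. Your proof is correct and is essentially the standard one (as in \cite{COV}): reorder a Jordan basis level by level, observe that the resulting matrix is a basic Weyr matrix with structure equal to the conjugate partition, and note that the change of basis is a permutation; then assemble the general case eigenvalue by eigenvalue.
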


The following example illustrates  \propref{prop-centralizer-basic-Weyr-block} and describes the centraliser of a Weyr matrix.
\begin{example}\label{example-Weyr-form-explanation}
	Let  {\small $A = 		\left( {\begin{array}{c|c|c}
				\mathrm{J}(1,   4) & & \\
				\hline
				&	\mathrm{J}(1,   4) &\\
				\hline
				&			&	\mathrm{J}(1,   2)\\
		\end{array} } \right)$} := $\mathrm{J}(1,   4) \,  \oplus 	\, \mathrm{J}(1,   4)\,  \oplus \, 	\mathrm{J}(1,   2) $ be a unipotent Jordan form in $\mathrm{GL}(10,\C)$ with  Jordan structure $(4,4,2)$.  Then the Weyr form $
	A_{W}$ corresponding to the Jordan form $A$ has the Weyr structure $(3,3,2,2)$ and can  be given as follows
	{ \small	$$
		A_{W} = \left( {\begin{array}{c|c|c|c}
				\mathrm{I}_3 & \mathrm{I}_3 & & \\
				\hline
				&	\mathrm{I}_3 & \mathrm{I}_{3,2} &\\
				\hline
				&			&	\mathrm{I}_2& \mathrm{I}_2\\
				\hline
				&			& & \mathrm{I}_2\\
		\end{array} } \right).$$}
	Furthermore,  \propref{prop-centralizer-basic-Weyr-block} implies that a matrix $B \in \mathrm{GL}(10,\C)$ commuting with the basic  Weyr matrix  $
	A_{W} $ has the following form:
	{\small		\begin{equation} \label{eq-centralizer-example-Weyr-form}
			B =
			\left( {\begin{array}{ccc|ccc|cc|cc}
					a & b & e& h &i&l &p& q & v & w  \\
					c & d& f & j&k &m &r&s & x & y \\
					0 & 0 & g& 0 &0& n&t&u & z & \alpha \\
					\hline
					&  & & a&b &e &h& i & p & q  \\
					&  & & c &d &f &j&k  & r & s\\
					&  & & 0 &0&g &0&0 & t & u \\
					\hline
					&  & &  && &a&b & h & i \\
					&  & &  &&&c &d  & j & k \\
					\hline
					&  & &  && &&  & a & b \\
					&  & &  && &&   & c & d
			\end{array} } \right).
	\end{equation}}
Here, empty blocks represent blocks with zero matrices.
	Observe that partitions $(4,4,2)$ and $(3,3,2,2)$  representing the Weyr and Jordan structure of $A$, respectively, are conjugate (or dual) to each other; see \defref{def-partition-dual}.
	\qed
\end{example}

\section{Reversing symmetry groups in $ {\rm GL}(n,\C) $}\label{sec-rev-sym-group-GL(n, C)}
This section explores the structure of the reversing symmetry group for specific types of Jordan forms (listed in Table \ref{table:1}) in $ {\rm GL}(n,\C) $, which may be of independent interest. We will then apply these results to investigate strong reversibility in $ {\rm SL}(n,\C) $.

\begin{remark} \label{rem-equvi-special-matrix-omega}
Recall the notation  $\Omega( \lambda, n)$ introduced in  \defref{def-special-matrix-omega}. 
For more clarity, we can  write $\Omega( \lambda, n) = [ x_{i,j} ]_{ 1\leq i,  j \leq n}  \in   \mathrm{GL}(n,\C)$, where $ x_{i,j} $ are as follows
	{\small \begin{equation}\label{equation-special-matrix-omega-entries}
			x_{i,j} =\begin{cases}
				0 & \text{if $j <i $} \\
				0 & \text{if $j =n,  i  \neq n$}\\
				(-1)^{n-i} \, \lambda^{-2(n-i)}& \text{if $j=i$ }\\
				(-1)^{n-i} \,  \binom{n-i-1}{j-i}  \,  \lambda^{-2n+i+j} & \text{if $i < j, j  \neq n$ }
			\end{cases},
	\end{equation}}
	where $\lambda \in \C \setminus \{0\}$ and $\binom{n-i-1}{j-i} $ denotes the binomial coefficients. Observe that for all $ 1\leq i \leq j \leq n-1 $, we can also write Equation   \eqref{eq-equvi-special-matrix-omega} as follows
	\begin{equation} \label{eq-special-matrix-omega}
		x_{i,j} = - \lambda^{-2} x_{i+1,j+1} + \lambda^{-3} x_{i+2,j+1} - \lambda^{-4} x_{i+3,j+1} + \dots + (-1)^{(n-i)} \lambda^{-(n-i+1)}x_{n,j+1}.
	\end{equation}
\end{remark}

The following example gives relationship between $\Omega( \lambda, 4)$ and $\mathrm{J}(\lambda, 4)$ for $\lambda \neq 0$.
\begin{example}\label{examp-omega-jordan}
	For a non-zero $\lambda \in \C$,  consider $\Omega( \lambda, 4)$  as defined in \defref{def-special-matrix-omega}.
	Then
	\begin{align*}
		\Omega( \lambda, 4) \, \mathrm{J}(\lambda^{-1}, 4) & =
		\begin{psmallmatrix}
			- \lambda^{-6} & -2\lambda^{-5}   & -\lambda^{-4} & 0\\
			& \lambda^{-4} & \lambda^{-3}  &0 \\
			&  &- \lambda^{-2} &0 \\
			&  &  & 1 
		\end{psmallmatrix}  \begin{psmallmatrix}
			\lambda^{-1} & 1   &0 & 0\\
			& \lambda^{-1} & 1  &0 \\
			&  & \lambda^{-1} &1 \\
			&  &  & \lambda^{-1}
		\end{psmallmatrix}
		=\begin{psmallmatrix}
			- \lambda^{-7} & -3\lambda^{-6}   &- 3 \lambda^{-5} &- \lambda^{-4}\\
			& \lambda^{-5} & 2 \lambda^{-4}  & \lambda^{-3} \\
			&  & -\lambda^{-3} & -\lambda^{-2} \\
			&  &  & \lambda^{-1}
		\end{psmallmatrix} 
		\\&= \begin{psmallmatrix}
			\lambda^{-1} &- \lambda^{-2}   &\lambda^{-3} & -\lambda^{-4}\\
			& \lambda^{-1} &- \lambda^{-2}   &\lambda^{-3} \\
			& &  \lambda^{-1} &- \lambda^{-2}  \\
			&  &  & \lambda^{-1}
		\end{psmallmatrix}  \begin{psmallmatrix}
			- \lambda^{-6} & -2\lambda^{-5}   & -\lambda^{-4} & 0\\
			& \lambda^{-4} & \lambda^{-3}  &0 \\
			&  &- \lambda^{-2} &0 \\
			&  &  & 1 
		\end{psmallmatrix} = \Big(\mathrm{J}(\lambda, 4)\Big)^{-1} \,  \Omega( \lambda, 4).
	\end{align*}
	Furthermore,  if $\lambda \in \{-1, +1\}$, then $\lambda^{-1}= \lambda$ and $\Omega( \lambda, 4)$ is a involution.  Therefore,   $ \mathrm{J}(\lambda, 4)$ is a strongly reversible element in $ {\rm GL }(4,\C)$ for $\lambda  \in \{-1, +1\}.$
	\qed
\end{example}

The following lemma generalises Example \ref{examp-omega-jordan}.
\begin{lemma}\label{lem-conjugacy-reverser-unipotent}  Let  $\Omega( \lambda, n) \in {\rm GL }(n,\C)$ be  as defined in \defref{def-special-matrix-omega}.
	Then we have
	$$  \Omega( \lambda, n) \, \mathrm{J}(\lambda^{-1}, n) = \Big(\mathrm{J}(\lambda, n)\Big)^{-1} \,  \Omega( \lambda, n).$$
\end{lemma}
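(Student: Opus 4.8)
The plan is to verify the matrix identity $\Omega(\lambda,n)\,\mathrm{J}(\lambda^{-1},n) = \bigl(\mathrm{J}(\lambda,n)\bigr)^{-1}\,\Omega(\lambda,n)$ by comparing entries on both sides, using the recursive relation \eqref{eq-equvi-special-matrix-omega} that defines the entries $x_{i,j}$ of $\Omega(\lambda,n)$. First I would record explicitly the two matrices being multiplied: $\mathrm{J}(\lambda^{-1},n)$ has $\lambda^{-1}$ on the diagonal and $1$ on the super-diagonal, so right-multiplication of $\Omega(\lambda,n)$ by it sends column $j$ of the product to $\lambda^{-1}x_{i,j} + x_{i,j-1}$ (with the convention $x_{i,0}=0$). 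On the other side, $\bigl(\mathrm{J}(\lambda,n)\bigr)^{-1}$ is upper triangular with entries $(-1)^{k-l}\lambda^{-(k-l+1)}$ in position $(l,k)$ for $l\le k$; thus left-multiplication gives, in position $(i,j)$, the sum $\sum_{k\ge i}(-1)^{k-i}\lambda^{-(k-i+1)}x_{k,j}$. Equating these two expressions row-by-row reduces the lemma to a single family of scalar identities indexed by $(i,j)$.

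The key step is then to show, for each $1\le i\le j\le n$, that
\begin{equation*}
\lambda^{-1}x_{i,j} + x_{i,j-1} = \sum_{k=i}^{n}(-1)^{k-i}\lambda^{-(k-i+1)}x_{k,j}.
\end{equation*}
I would prove this by downward induction on $i$ (from $i=n$ to $i=1$), or equivalently by directly invoking the defining recursion. Note that \eqref{eq-equvi-special-matrix-omega} says precisely $x_{i,j} = -\lambda^{-2}x_{i+1,j+1} - \lambda^{-1}x_{i,j-1}$ after reindexing; rearranged, this gives $\lambda^{-1}x_{i,j-1} + \lambda^{-1}x_{i,j}\cdot(\text{something})$. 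More carefully: the right-hand side above, when we split off the $k=i$ term, equals $\lambda^{-1}x_{i,j} - \lambda^{-1}\sum_{k=i+1}^{n}(-1)^{k-i-1}\lambda^{-(k-i)}x_{k,j} = \lambda^{-1}x_{i,j} - \lambda^{-1}\bigl(\sum_{k=(i+1)}^{n}(-1)^{k-(i+1)}\lambda^{-(k-(i+1)+1)}x_{k,j}\bigr)$, and the inductive hypothesis applied at row $i+1$ with the same column rewrites the inner sum as $\lambda^{-1}x_{i+1,j} + x_{i+1,j-1}$. Substituting, the claimed identity at row $i$ becomes $x_{i,j-1} = -\lambda^{-1}\bigl(\lambda^{-1}x_{i+1,j} + x_{i+1,j-1}\bigr) = -\lambda^{-2}x_{i+1,j} - \lambda^{-1}x_{i+1,j-1}$, which is exactly \eqref{eq-equvi-special-matrix-omega} with $j$ replaced by $j-1$. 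The base case $i=n$ is immediate from the explicit values $x_{n,n}=1$, $x_{n,j}=0$ for $j<n$.

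I also need to handle the boundary columns separately: when $j=n$, the entries $x_{i,n}$ vanish for $i<n$ by part (2) of \defref{def-special-matrix-omega}, and one checks both sides of the identity collapse consistently using $x_{n,n}=1$; when $j<n$ but $i=j$ or the index $j+1$ reaches $n$, the recursion \eqref{eq-equvi-special-matrix-omega} still applies since it is stated for all $1\le i\le j\le n-1$. The only genuine subtlety — and the step I expect to require the most care — is bookkeeping the index shifts between the two equivalent forms \eqref{eq-special-matrix-omega} and \eqref{eq-equvi-special-matrix-omega} of the recursion and making sure the induction is set up so that the hypothesis at row $i+1$ is available for the same column appearing at row $i$; once the recursion is pinned down in the form $x_{i,j-1} = -\lambda^{-2}x_{i+1,j} - \lambda^{-1}x_{i+1,j-1}$, the argument is a short telescoping computation. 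No combinatorial identity on binomial coefficients is actually needed for this lemma, since everything follows from the defining recursion directly; the closed form \eqref{equation-special-matrix-omega-entries} is only needed to see that $\Omega(\lambda,n)$ is invertible (it is upper triangular with nonzero diagonal) and, when $\lambda=\pm1$, that it is an involution.
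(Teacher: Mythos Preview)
Your proposal is correct and follows essentially the same route as the paper: compute the $(i,j)$-entries of both sides, obtain $\lambda^{-1}x_{i,j}+x_{i,j-1}$ on the left and the alternating sum $\sum_{k\ge i}(-1)^{k-i}\lambda^{-(k-i+1)}x_{k,j}$ on the right, and then identify the latter with $\lambda^{-1}x_{i,j}+x_{i,j-1}$ via the defining recursion. The only cosmetic difference is that the paper invokes the unrolled form \eqref{eq-special-matrix-omega} of the recursion directly (so that the tail of the sum is recognized immediately as $x_{i,j-1}$), whereas you re-derive this by a downward induction on $i$ using the two-term form \eqref{eq-equvi-special-matrix-omega}; these are the same argument. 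One small slip to clean up: where you write ``\eqref{eq-equvi-special-matrix-omega} says precisely $x_{i,j} = -\lambda^{-2}x_{i+1,j+1} - \lambda^{-1}x_{i,j-1}$'', the last term should be $-\lambda^{-1}x_{i+1,j}$, as you in fact use correctly a few lines later.
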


\begin{proof}
	Write $ \mathrm{J}(\lambda^{-1}, n) = [a_{i,j}]_{ n \times n} $ and $ \Big(\mathrm{J}(\lambda, n)\Big)^{-1} = [b_{i,j}]_{ n \times n} $, where 
	{\small	\begin{equation*}
			a_{i,j} =  \begin{cases}
				\lambda^{-1} & \text{if $j=i$ }\\
				1 & \text{if $j =i+1$}\\
				0 & \text{otherwise}
			\end{cases}, \hbox{ and }  b_{i,j} =  \begin{cases}
				\lambda^{-1} & \text{if $j=i$ }\\
				(-1)^k \lambda^{-(k+1)} & \text{if $j =i+k$}\\
				0 & \text{otherwise}
			\end{cases}.
	\end{equation*}}
	Let $ \Omega( \lambda, n) =[x_{i,j}]_{ n \times n} $.
	Note that   for all $1\leq i \leq n$,  we have $$\Big( \Omega( \lambda, n) \, \mathrm{J}(\lambda^{-1}, n)  \Big)_{i, i} =\Big( \Big(\mathrm{J}(\lambda, n)\Big)^{-1} \,  \Omega( \lambda, n) \Big)_{i, i} = \lambda^{-1} x_{i, i} .$$  Since matrices  under consideration are upper triangular, so it is enough to prove the following
	$$
	\Big( \Omega( \lambda, n) \, \mathrm{J}(\lambda^{-1}, n)  \Big)_{i,j} =\Big( \Big(\mathrm{J}(\lambda, n)\Big)^{-1} \,  \Omega( \lambda, n) \Big)_{i, j} = \lambda^{-1} \,  x_{i, j} + x_{i, j-1} \hbox { for all } 1 \leq i < j\leq n. 
	$$
	To see this, note that  for all $1 \leq i < j\leq n$, we have
	$$\Big( \Omega( \lambda, n) \, \mathrm{J}(\lambda^{-1}, n)  \Big)_{i,j} = \sum_{r=1}^{n} x_{i,r} \,  a_{r,j} = \sum_{r=i}^{j} x_{i,r} \,  a_{r,j}= x_{i,j-1} + x_{i,j} \lambda^{-1}=\lambda^{-1}x_{i,j} +x_{i,j-1}.$$
	Furthermore,  for all $1 \leq i < j\leq n$,  we have
	\begin{align*}
		\Big( \Big( \mathrm{J}(\lambda, n)\Big)^{-1}  \,  \Omega( \lambda, n)  \Big)_{i,j}   & = \sum_{r=1}^{n} b_{i,r}  x_{r,j} = \sum_{r=i}^{j} b_{i,r}   x_{r,j} = b_{i,i} x_{i,j} + b_{i,i+1} x_{i+1,j}+ \dots + b_{i,j} x_{j,j}
		\\&	= \lambda^{-1} x_{i,j} + (- \lambda^{-2} x_{i+1,j} +  \lambda^{-3} x_{i+2,j} +  \dots + (-1)^{(j-i)}  \lambda^{-(j-i+1)}  x_{j,j}).
	\end{align*}
	Using  Equations  \eqref{eq-equvi-special-matrix-omega} and \eqref{eq-special-matrix-omega}, we get
	$$   \Big( \Big(\mathrm{J}(\lambda, n)\Big)^{-1}  \,  \Omega( \lambda, n)  \Big)_{i,j}  = \lambda^{-1} \,  x_{i,j} + x_{i,j-1} \hbox { for all } 1 \leq i < j\leq n. 
	$$
	Hence, the proof follows.
\end{proof}

Next, we want to find a relationship between  $ \Omega( \lambda, n) $ and $ \Omega( \lambda^{-1}, n) $,  which will be used for constructing reversers that are involutions for strongly reversible elements in  ${\rm SL }(n,\C)$. For this, we will use some well-known combinatorial identities. We refer to  \cite[Section 1.2]{burton} for the basic notions related to the binomial coefficients.
Recall the following well-known binomial identities concerning binomial coefficients.
\begin{enumerate} 
	\item ~\label{eq-Pascal-rule} 
	\textit{Pascal's rule:} $  \binom{n}{k} + \binom{n}{k-1} =\binom{n+1}{k}$ for all $1 \leq k \leq n$.
	\vspace{.1cm}
	\item ~\label{eq-Newton-identity}  
	\textit{Newton's identity:} $\binom{n}{k} \binom{k}{r}= \binom{n}{r} \binom{n-r}{k-r}  $ for all $ 0\leq r \leq k \leq n$.
	\vspace{.1cm}
	\item ~\label{eq-binomial-alternate-sum}
	For $n\geq 1$,   $\sum_{k=0}^{n} (-1)^{k} \binom{n}{k}=0$.
\end{enumerate}

In the following lemma, we compute inverse of $\Omega( \lambda, n)$ in  ${\rm GL }(n,\C)$. 

\begin{lemma}\label{lem-relation-omega-inverse}
	Let  $\Omega( \lambda, n) \in {\rm GL }(n,\C)$  be as defined in \defref{def-special-matrix-omega}.
	Then we have
	$$\Big(\Omega( \lambda, n)\Big)^{-1} = \Omega( \lambda^{-1}, n).$$
\end{lemma}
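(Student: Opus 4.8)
The plan is to verify the identity $\Omega(\lambda,n)\,\Omega(\lambda^{-1},n) = \mathrm{I}_n$ by a direct entrywise computation, exploiting the explicit closed form \eqref{equation-special-matrix-omega-entries} for the entries of $\Omega(\lambda,n)$. Write $\Omega(\lambda,n) = [x_{i,j}]$ and $\Omega(\lambda^{-1},n) = [y_{i,j}]$, so that $y_{i,j}$ is obtained from $x_{i,j}$ by replacing $\lambda$ with $\lambda^{-1}$; concretely $y_{i,j} = (-1)^{n-i}\binom{n-i-1}{j-i}\lambda^{2n-i-j}$ for $i<j<n$, $y_{i,i} = (-1)^{n-i}\lambda^{2(n-i)}$, $y_{i,n}=0$ for $i\neq n$, $y_{n,n}=1$, and $y_{i,j}=0$ for $j<i$. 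Both matrices are upper triangular, so the product is upper triangular, and I only need to check that the diagonal entries of the product equal $1$ and that the strictly-upper entries vanish.

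First I would dispose of the easy cases. The last column: since $x_{i,n}=0$ for $i<n$ and $x_{n,n}=1$, and likewise for $y$, the $(i,n)$ entry of the product is $\sum_r x_{i,r}y_{r,n} = x_{i,n}y_{n,n} = 0$ for $i<n$ and $=1$ for $i=n$. The diagonal entry $(i,i)$ for $i<n$ is $x_{i,i}y_{i,i} = (-1)^{n-i}\lambda^{-2(n-i)}\cdot(-1)^{n-i}\lambda^{2(n-i)} = 1$. So the remaining work is to show $\sum_{r=i}^{j} x_{i,r}y_{r,j} = 0$ for all $1\le i<j\le n-1$.

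For the off-diagonal vanishing I would substitute the closed forms and factor out the powers of $\lambda$. In $\sum_{r=i}^{j} x_{i,r}y_{r,j}$, the term with $r=i$ uses $x_{i,i}$, the term with $r=j$ uses $y_{j,j}$, and intermediate terms use the $\binom{n-i-1}{r-i}$ and $\binom{n-r-1}{j-r}$ forms; a quick check shows every term carries the same monomial $\lambda^{-2n+i+j}$ (times $(-1)^{n-j}$ overall, up to bookkeeping), so the sum reduces to a pure binomial-coefficient identity of the shape $\sum_{r=i}^{j} (-1)^{r-i}\binom{n-i-1}{r-i}\binom{n-r-1}{j-r} = 0$. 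Reindexing with $a = r-i$, $b = j-i$, $m = n-i-1$ this becomes $\sum_{a=0}^{b} (-1)^a \binom{m}{a}\binom{m-a}{b-a} = 0$ for $1 \le b \le m$. Here I would apply Newton's identity \eqref{eq-Newton-identity} in the form $\binom{m}{a}\binom{m-a}{b-a} = \binom{m}{b}\binom{b}{a}$, pull $\binom{m}{b}$ out of the sum, and conclude by the alternating-sum identity \eqref{eq-binomial-alternate-sum}, $\sum_{a=0}^{b}(-1)^a\binom{b}{a} = 0$ (valid since $b\ge 1$). Care is needed at the endpoints $r=i$ and $r=j$ to confirm they fit the same binomial pattern — at $r=i$ one reads $\binom{m}{0}=1$ matching $x_{i,i}$'s coefficient, and at $r=j$ one reads $\binom{m-b}{0}=1$ matching $y_{j,j}$'s coefficient — and in checking signs one must track that $x_{i,i}$ contributes $(-1)^{n-i}$ while the generic $x_{i,r}$ contributes $(-1)^{n-i}$ as well, and the $y$ factors contribute $(-1)^{n-r}$, so the aggregated sign is $(-1)^{n-i}(-1)^{n-r} = (-1)^{r-i}$ up to a global $(-1)^{\text{const}}$.

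The main obstacle is purely bookkeeping: making sure the powers of $\lambda$ really do coincide across all terms of the sum (so that one genuinely gets a $\lambda$-free binomial identity) and that the endpoint terms $r=i,j$ are correctly absorbed into the general binomial pattern rather than requiring separate treatment. Once the sum is in the normalized form $\sum_{a=0}^{b}(-1)^a\binom{m}{a}\binom{m-a}{b-a}$, the combinatorics is immediate from Newton's identity and the alternating binomial sum, both recalled in the excerpt. An alternative, slicker route — which I would mention but not pursue in detail — is to avoid the closed form entirely: by \lemref{lem-conjugacy-reverser-unipotent}, $\Omega(\lambda,n)$ conjugates $\mathrm{J}(\lambda^{-1},n)$ to $\mathrm{J}(\lambda,n)^{-1}$ and, applying the same lemma with $\lambda$ replaced by $\lambda^{-1}$, $\Omega(\lambda^{-1},n)$ conjugates $\mathrm{J}(\lambda,n)$ to $\mathrm{J}(\lambda^{-1},n)^{-1}$; composing, $\Omega(\lambda,n)\Omega(\lambda^{-1},n)$ centralizes $\mathrm{J}(\lambda,n)$, hence is an upper-triangular polynomial in the nilpotent $\mathrm{J}(\lambda,n)-\lambda^{-1}\mathrm{I}$, and comparing the diagonal (already shown to be $\mathrm{I}_n$) with one more super-diagonal would pin it down — but the direct computation is cleanest to write out.
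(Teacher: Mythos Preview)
Your proposal is correct and follows essentially the same route as the paper's own proof: both compute the product $\Omega(\lambda,n)\,\Omega(\lambda^{-1},n)$ entrywise using the closed form \eqref{equation-special-matrix-omega-entries}, reduce the off-diagonal entries to the binomial identity $\sum_{a=0}^{b}(-1)^a\binom{m}{a}\binom{m-a}{b-a}=0$, and finish via Newton's identity \eqref{eq-Newton-identity} together with the alternating-sum identity \eqref{eq-binomial-alternate-sum}. Your treatment is in fact slightly more careful about the boundary cases (last column, diagonal, endpoints $r=i,j$) than the paper's write-up, and the alternative centralizer argument you sketch at the end is a nice observation not appearing in the paper.
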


\begin{proof} 
	Let $  \Omega(\lambda, n) =[x_{i,j}]_{ n \times n} $ and $ \Omega(\lambda^{-1}, n) =[y_{i,j}]_{ n \times n} $.  Then  for all $1\leq i,  j \leq n$, observe that  $x_{i,j}$ are given by Equation \eqref{equation-special-matrix-omega-entries} and 
	{\small	$$
		y_{i,j} = \begin{cases}
			0 & \text{if $j <i $}\\
			0 & \text{if $j =n,  i  \neq n$}\\
			(-1)^{n-i} \,  \lambda^{2(n-i)}& \text{if $j=i$ }\\
			(-1)^{n-i} \,  \binom{n-i-1}{j-i}  \,  \lambda^{-(-2n+i+j)}  & \text{if $ i <j, j  \neq n$. }
		\end{cases}$$}
	Here,  condition (\ref{cond-main-special-matrix-omega}) of  \defref{def-special-matrix-omega} can be checked using Pascal's rule (\ref{eq-Pascal-rule}). 
	
	Let $g = [g_{i,j}]_{n \times n} :=  \Omega(\lambda, n) \,  \Omega(\lambda^{-1}, n) $.  Then $g$ is an upper triangular matrix with diagonal entries equal to $1$, and   $g_{i,j} =   \sum_{k=1}^{n} x_{i,k} \,  y_{k,j} = \sum_{k=i}^{j} x_{i,k} \,  y_{k,j}  $ for all $1 \leq i < j \leq n$. This  implies that  for all $1 \leq i < j \leq n$, we have
	$$g_{i,j} =    \sum_{k=i}^{j}  (-1)^{n-i}  \,  \binom{n-i-1}{k-i }  \,   \lambda^{-2n+i+k}   \,  (-1)^{n-k} \,    \binom{n-k-1}{j-k }  \,  \lambda^{-(-2n+k+j)}.$$
	Therefore,  for all $1 \leq i < j \leq n$, we have
	\begin{equation}\label{eq-relation-omega-complex-inverse}
		g_{i,j} =    \lambda^{i-j} \,   \sum_{k=i}^{j}  (-1)^{(-i-k)}   \,  \binom{n-i-1}{k-i }   \,     \binom{n-k-1}{j-k }.
	\end{equation}
	By substituting $r = k-i$ in  Equation \eqref{eq-relation-omega-complex-inverse}, we get
	$$
	g_{i,j} =    \lambda^{i-j} \,   \sum_{r=0}^{j-i} (-1)^{(-2i-r)}  \binom{n-i-1}{r } \binom{(n-i-1)-r}{(j-i) -r}.
	$$
	In view of the Newton's identity (\ref{eq-Newton-identity}) and identity (\ref{eq-binomial-alternate-sum}), we get
	$$ g_{i,j}  =   \lambda^{i-j} \,  \binom{n-i-1}{j-i } \,  \sum_{r=0}^{j-i} \, (-1)^r \,  \binom{j-i}{r } =0 \hbox { for all } 1\leq i < j \leq n.$$
	Hence,   $g = \mathrm{I}_n$  in ${\rm GL }(n,\C)$.   This completes the proof.
\end{proof}

In the following result, we show that $\Omega(\mu, n)$ is an involution for $\mu \in \{-1, +1\}$.
\begin{lemma}\label{lem-involution-reverser-unipotent}
Let  $\Omega( \mu, n)  \in {\rm GL }(n,\C)$ be as defined in \defref{def-special-matrix-omega}.  If $\mu \in \{- 1, +1\}$, then  $\Omega( \mu, n)$ is an involution in ${\rm GL }(n,\C)$.
\end{lemma}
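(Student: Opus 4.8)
The plan is to deduce this directly from \lemref{lem-relation-omega-inverse}. That lemma establishes $\big(\Omega(\lambda,n)\big)^{-1} = \Omega(\lambda^{-1},n)$ for every nonzero $\lambda \in \C$. Now specialize $\lambda$ to $\mu \in \{-1,+1\}$. In either case $\mu^{-1} = \mu$, so the identity becomes $\big(\Omega(\mu,n)\big)^{-1} = \Omega(\mu^{-1},n) = \Omega(\mu,n)$. Multiplying both sides by $\Omega(\mu,n)$ gives $\big(\Omega(\mu,n)\big)^{2} = \mathrm{I}_n$, which is precisely the assertion that $\Omega(\mu,n)$ is an involution in ${\rm GL}(n,\C)$ (it is invertible by construction, being upper triangular with nonzero diagonal entries $(-1)^{n-i}\mu^{-2(n-i)} = (-1)^{n-i} \neq 0$, so it genuinely lies in ${\rm GL}(n,\C)$).

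The only point requiring a word of care is that $\Omega(\lambda,n)$ was defined via \eqnref{equation-special-matrix-omega-entries} using $\lambda^{-1}$, $\lambda^{-2}$, etc., and one should confirm that substituting $\lambda = \mu$ with $\mu = \pm 1$ causes no degeneracy: since $\mu \neq 0$ all the negative powers of $\mu$ are well-defined, so $\Omega(\mu,n)$ is a legitimate member of the family to which \lemref{lem-relation-omega-inverse} applies. There is no genuine obstacle here; the lemma is an immediate corollary of the preceding one, and the proof is a single line once one observes that $\mu = \mu^{-1}$. I would therefore write:

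\begin{proof}
	Let $\mu \in \{-1, +1\}$, so that $\mu^{-1} = \mu$. By \lemref{lem-relation-omega-inverse},
	$$\big(\Omega(\mu, n)\big)^{-1} = \Omega(\mu^{-1}, n) = \Omega(\mu, n).$$
	Hence $\big(\Omega(\mu, n)\big)^{2} = \mathrm{I}_n$, and since $\Omega(\mu, n) \in {\rm GL}(n,\C)$, it follows that $\Omega(\mu, n)$ is an involution in ${\rm GL}(n,\C)$.
\end{proof}

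This also records the payoff already anticipated in Table \ref{table:1} and in Example \ref{examp-omega-jordan}: combining \lemref{lem-conjugacy-reverser-unipotent} (which gives $\Omega(\mu,n)\,\mathrm{J}(\mu^{-1},n) = \big(\mathrm{J}(\mu,n)\big)^{-1}\Omega(\mu,n)$, i.e. $\Omega(\mu,n)$ reverses $\mathrm{J}(\mu,n)$ when $\mu = \pm 1$) with the present lemma shows that $\mathrm{J}(\mu,n)$ is strongly reversible in ${\rm GL}(n,\C)$ for $\mu \in \{-1,+1\}$, since its reversing element $\Omega(\mu,n)$ is an involution. No separate argument or calculation beyond the cited lemmas is needed.
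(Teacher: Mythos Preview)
Your proof is correct and essentially identical to the paper's own: both invoke \lemref{lem-relation-omega-inverse} to obtain $\big(\Omega(\mu,n)\big)^{-1} = \Omega(\mu^{-1},n)$ and then observe that $\mu^{-1} = \mu$ when $\mu \in \{-1,+1\}$. The additional remarks you include (about invertibility and the connection to Table~\ref{table:1}) are fine but not needed for the argument.
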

\begin{proof}
In view of the \lemref{lem-relation-omega-inverse}, we have
$$\Big(\Omega( \mu, n)\Big)^{-1} = \Omega( \mu^{-1}, n).$$
If $\mu \in \{-1, +1\}$,  then $\mu^{-1} = \mu$.  Hence, the proof follows.
\end{proof}

\begin{remark}
The	proof of Table \ref{table:1} follows from \lemref{lem-conjugacy-reverser-unipotent} and \lemref{lem-relation-omega-inverse}. \qed
\end{remark}

\subsection{Reverser set  of certain Jordan forms}\label{subsec-reverser-Jordan-block}

In this section, we will compute the reverser set $\mathcal{R}_{{\GL}(n,\C) }(A) $   for  certain Jordan forms in ${\GL}(n,\C)$. 
We introduce a notation for upper triangular \textit{Toeplitz} matrices and some definitions to formulate our results.

\begin{definition}[{\cite[p. 297]{GLR}}]\label{def-toeplitz}
For $\mathbf{x}=(x_{1},x_{2},\dots, x_{n}) \in \C^{n}$,  we define $\mathrm{Toep}_{n}(\textbf{x}) \in \mathrm{M}(n, \C)$ as
	\begin{equation}\label{eq-expression-toeplitz}
		\mathrm{Toep}_{n}(\textbf{x}):= [x_{i,j}]_{n \times n} =  {\small\begin{cases}
				0 & \text{if $i>j$ }\\
				x_{j-i+1}& \text{if $i \leq j$}\\
			\end{cases},} \hbox{ where } 1 \leq i,j \leq n.
	\end{equation}
\end{definition}
We can also write $\mathrm{Toep}_{n}(\textbf{x})$ as
\begin{equation}\label{eq-Toeplitz-matrix}
	\mathrm{Toep}_{n}(\mathbf{x})= \begin{psmallmatrix}
		x_{1} & x_{2}   & \cdots & \cdots & \cdots & \cdots & x_{n}\\
		& x_{1} & x_{2}   & \cdots  & \cdots & \cdots & x_{n-1}\\
		&  & x_{1} & x_{2}  &  \cdots  &  \cdots & x_{n-2}\\
		&  &  &\ddots   & \ddots  &&  \vdots\\
		& & & & \ddots & \ddots   & \vdots\\
		& & & &  & x_{1} & x_{2} \\
		&   &  &  &  &  & x_{1}
	\end{psmallmatrix}.
\end{equation}

\begin{definition}\label{def-general-special-matrix-omega}
For a non-zero  $\lambda \in \C$ and $\mathbf{x}=(x_{1},x_{2},\dots, x_{n}) \in \C^{n}$ such that $x_{1} \neq 0$, define upper triangular matrix
	$\Omega( \lambda,\mathbf{x}, n) := [ g_{i,j} ]_{n \times n}  \in   \mathrm{GL}(n,\C)$ as follows
	\begin{enumerate}
		\item $g_{i,j} = 0$ for all $ 1\leq i,j \leq n$ such that $i >j$,
		\item  $g_{i,n} =  x_{n-i+1}  $  for all  $1\leq i\leq n$,
		\item  $g_{i,j} =   - \lambda^{-1} g_{i+1,j} - \lambda^{-2} g_{i+1,j+1} \hbox{ for all } 1\leq i \leq j \leq n-1 $.
	\end{enumerate}
\end{definition}

Note the following example that shows the relationship between $\Omega(\lambda, 5)$ and $\Omega(\lambda, \mathbf{x}, 5)$ when $\lambda=1$.
\begin{example}\label{examp-omega-toeplitz}
	Let $\lambda= 1$ and $\mathbf{x}=(x_{1},x_{2},\dots, x_{5}) \in \C^{5}$. Note that
	$$\begin{psmallmatrix}
		x_{1} & 3x_{1} - x_{2}   & 3x_{1} -2 x_{2} + x_{3}  &  x_{1}- x_{2} +  x_{3}  - x_{4} & x_{5}\\
		& -x_{1} & -2x_{1} +x_{2}   & - x_{1} + x_{2} -x_{3}  & x_{4}\\
		&  & x_{1} & x_{1} - x_{2}  &x_{3} \\
		& &  & -x_{1} &  x_{2} \\
		&  &  &  & x_{1}
	\end{psmallmatrix} = \begin{psmallmatrix}
		x_{1} & x_{2}   &x_{3}  & x_{4} & x_{5}\\
		& x_{1} & x_{2}   &x_{3}  & x_{4}\\
		&  & x_{1} & x_{2}  &x_{3} \\
		& &  & x_{1} & x_{2} \\
		&  &  &  & x_{1}
	\end{psmallmatrix} \begin{psmallmatrix}
		1 & 3  &3  & 1 &0 \\
		& -1 & -2   & -1 & 0\\
		&  & 1 & 1  &0 \\
		& &  & -1 &0 \\
		&  &  &  & 1
	\end{psmallmatrix}.$$
	Therefore, we have 
	$\Omega( 1,\mathbf{x}, 5) =   \mathrm{Toep}_{5}(\mathbf{x})\,  \Omega( 1,5).$
	\qed\end{example}

In the following lemma, we generalise Example \ref{examp-omega-toeplitz} and establish a relationship between $\Omega(\lambda, \mathbf{x}, n)$ and the Jordan block $\mathrm{J}(\lambda, n)$.

\begin{lemma}\label{lem-relation-toep-omega}
Consider $\lambda \in \C$ and $\mathbf{x}=(x_{1},x_{2},\dots, x_{n}) \in \C^{n}$ such that $\lambda \neq 0$ and  $x_{1} \neq 0$.  Let  $\Omega( \lambda, n),\mathrm{Toep}_{n}(\mathbf{x})$,  and $\Omega( \lambda,\mathbf{x}, n)$ be as defined in \defref{def-special-matrix-omega}, \defref{def-toeplitz}, and \defref{def-general-special-matrix-omega}, respectively.
Then the following statements hold.
\begin{enumerate}
		\item $\Omega( \lambda,\mathbf{x}, n) =   \mathrm{Toep}_{n}(\mathbf{x})\,  \Omega( \lambda,n)$.
		\item $ \Omega( \lambda,\mathbf{x}, n) \, \mathrm{J}(\lambda^{-1}, n) =  \Big( \mathrm{J}(\lambda, n) \Big)^{-1} \,  \Omega( \lambda,\mathbf{x}, n)$.
\end{enumerate}
\end{lemma}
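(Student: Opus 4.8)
The plan is to prove the two statements in order, using the previously established lemmas as building blocks. For statement (1), namely $\Omega(\lambda,\mathbf{x},n) = \mathrm{Toep}_{n}(\mathbf{x})\,\Omega(\lambda,n)$, I would proceed by comparing entries directly. Write $\Omega(\lambda,n) = [x_{i,j}]$ as in \defref{def-special-matrix-omega} and set $P := \mathrm{Toep}_{n}(\mathbf{x})\,\Omega(\lambda,n) = [p_{i,j}]$. Since both $\mathrm{Toep}_{n}(\mathbf{x})$ and $\Omega(\lambda,n)$ are upper triangular, so is $P$, which matches property (1) of \defref{def-general-special-matrix-omega}. For the last column, $p_{i,n} = \sum_{r} (\mathrm{Toep}_{n}(\mathbf{x}))_{i,r}\,x_{r,n} = (\mathrm{Toep}_{n}(\mathbf{x}))_{i,n}\,x_{n,n} = x_{n-i+1}\cdot 1 = x_{n-i+1}$, using properties (2) and (3) of \defref{def-special-matrix-omega}; this matches property (2) of \defref{def-general-special-matrix-omega}. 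The crux is verifying the recursion (property (3)): I want $p_{i,j} = -\lambda^{-2} p_{i+1,j+1} - \lambda^{-1} p_{i+1,j}$ for $1 \le i \le j \le n-1$. Expanding $p_{i,j} = \sum_{r=i}^{j}(\mathrm{Toep}_{n}(\mathbf{x}))_{i,r}\,x_{r,j} = \sum_{r=i}^{j} x_{r-i+1}\,x_{r,j}$ and similarly for the right-hand side, one reindexes and uses Equation~\eqref{eq-equvi-special-matrix-omega} (the recursion $x_{r,j} = -\lambda^{-2}x_{r+1,j+1} - \lambda^{-1}x_{r+1,j}$) term by term; the Toeplitz shift structure means the index $r-i+1$ in the coefficient is preserved when both $i$ and $r$ increase by one, so the identity collapses cleanly. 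Since properties (1)–(3) uniquely determine $\Omega(\lambda,\mathbf{x},n)$, this establishes statement (1).

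For statement (2), I would deduce it from statement (1) together with \lemref{lem-conjugacy-reverser-unipotent}. By (1), $\Omega(\lambda,\mathbf{x},n)\,\mathrm{J}(\lambda^{-1},n) = \mathrm{Toep}_{n}(\mathbf{x})\,\Omega(\lambda,n)\,\mathrm{J}(\lambda^{-1},n)$, and by \lemref{lem-conjugacy-reverser-unipotent} this equals $\mathrm{Toep}_{n}(\mathbf{x})\,\big(\mathrm{J}(\lambda,n)\big)^{-1}\,\Omega(\lambda,n)$. It therefore suffices to show that $\mathrm{Toep}_{n}(\mathbf{x})$ commutes with $\big(\mathrm{J}(\lambda,n)\big)^{-1}$, since then the right side becomes $\big(\mathrm{J}(\lambda,n)\big)^{-1}\,\mathrm{Toep}_{n}(\mathbf{x})\,\Omega(\lambda,n) = \big(\mathrm{J}(\lambda,n)\big)^{-1}\,\Omega(\lambda,\mathbf{x},n)$, which is the claim. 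The commutation $\mathrm{Toep}_{n}(\mathbf{x})\,\big(\mathrm{J}(\lambda,n)\big)^{-1} = \big(\mathrm{J}(\lambda,n)\big)^{-1}\,\mathrm{Toep}_{n}(\mathbf{x})$ is standard: both $\mathrm{J}(\lambda,n)$ and $\big(\mathrm{J}(\lambda,n)\big)^{-1}$ lie in the commutative algebra of upper triangular Toeplitz matrices (polynomials in the nilpotent shift $N = \mathrm{J}(0,n)$), and $\mathrm{Toep}_{n}(\mathbf{x})$ is itself such a polynomial in $N$; alternatively one checks it directly since anything commuting with $\mathrm{J}(\lambda,n)$ commutes with its inverse. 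I would state this as a brief observation or cite the well-known description of the centralizer of a single Jordan block.

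I expect the main obstacle to be statement (1), specifically the careful bookkeeping in verifying the recursion for $p_{i,j}$: one must track how the Toeplitz coefficient index and the $\Omega$-index interact under the simultaneous shift $i \mapsto i+1$, $j \mapsto j+1$, and confirm that the boundary term at $r = i$ (where the lower summation limit moves) behaves correctly. This is routine but requires attention; everything else reduces quickly to the already-proven \lemref{lem-conjugacy-reverser-unipotent} and the elementary fact that upper triangular Toeplitz matrices form a commutative algebra containing $\mathrm{J}(\lambda,n)$ and its inverse. An alternative, perhaps cleaner, route for (1) is to observe that $\Omega(\lambda,n)$ itself is of the form $\mathrm{Toep}_{n}(\mathbf{e})\cdot(\text{something})$ or to recognize both sides as the unique solution of the same linear recursion with the same final-column data, invoking uniqueness from \defref{def-general-special-matrix-omega} rather than expanding sums; I would likely present the uniqueness argument as the primary proof and relegate the explicit entry computation to a remark or omit it.
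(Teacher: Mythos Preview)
Your proposal is correct and follows essentially the same architecture as the paper's proof: for (1) you verify that $P:=\mathrm{Toep}_n(\mathbf{x})\,\Omega(\lambda,n)$ satisfies the three defining properties of $\Omega(\lambda,\mathbf{x},n)$, and for (2) you combine (1) with \lemref{lem-conjugacy-reverser-unipotent} and the commutation of $\mathrm{Toep}_n(\mathbf{x})$ with $\mathrm{J}(\lambda,n)^{-1}$, exactly as the paper does. The only technical difference is in how the recursion $p_{i,j}=-\lambda^{-2}p_{i+1,j+1}-\lambda^{-1}p_{i+1,j}$ is checked: the paper expands $p_{i,j}$ via the closed binomial formula \eqref{equation-special-matrix-omega-entries} and invokes Pascal's rule, whereas you propose to push the recursion \eqref{eq-equvi-special-matrix-omega} for the entries of $\Omega(\lambda,n)$ through the sum term by term, exploiting the Toeplitz shift; your route is slightly more elementary since it avoids the explicit binomial expression altogether.
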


\begin{proof}
Let $ \mathrm{Toep}_{n}(\mathbf{x}) = [x_{i,j}]_{n \times n} $ and $\Omega( \lambda,n) = [y_{i,j}]_{n \times n}$. Then $ x_{i,j}$ and  $y_{i,j}$  are  given by  Equations  \eqref{eq-expression-toeplitz} and \eqref{equation-special-matrix-omega-entries}, respectively, where  $ 1 \leq i,j \leq n$.  Let   $g = [g_{i,j}]_{n \times n} = \mathrm{Toep}_{n}(\mathbf{x})\,  \Omega( \lambda,n)$. Then $g$ is an upper triangular matrix such that  for all $1 \leq i\leq j \leq n$,  we have
	\begin{equation}\label{eq-1-relation-toeplitz-reverser}
		g_{i,j} =   \sum_{k=1}^{n} x_{i,k} \,  y_{k,j} = \sum_{k=i}^{j} x_{i,k} \,  y_{k,j} =    \sum_{k=i}^{j}  x_{k-i+1}  (-1)^{n-k}  \binom{n-k-1}{j-k }  \,   \lambda^{-2n+k+j}.
	\end{equation}
	This implies that for all $1 \leq i\leq n$,  we have
	\begin{equation}\label{eq-2-relation-toeplitz-reverser}
		g_{i,i} = x_{1} (-1)^{n-i}   \lambda^{-2(n-i)},  \hbox{ and } g_{i,n} = x_{i,n} \, y_{n,n} = x_{i,n} = x_{n-i+1}.
	\end{equation}
	Note that for all $1\leq i < j \leq n-1$, we have
	\begin{align*}
		g_{i+1,j}& = \sum_{k=i+1}^{j}  x_{k-(i+1) +1} (-1)^{n-k}  \binom{n-k-1}{j-k } \, \lambda^{-2n+k+j}, \hbox{ and }
		\\	g_{i+1,j+1} &=   \Big( \sum_{k=i+1}^{j} x_{k-(i+1)+1}  (-1)^{n-k}  \binom{n-k-1}{(j+1)-k }\, \lambda^{-2n+k+j+1} \Big)
		\\ & \quad +  x_{j-i+1} (-1)^{n-j-1}  \lambda^{-2n+ 2(j+1)}.
	\end{align*}
	This implies that for all $1\leq i < j \leq n-1$, we have
	\begin{align*}
		\lambda^{-2} g_{i+1,j+1} &+ \lambda^{-1}g_{i+1,j} = x_{j-i+1}  (-1)^{n-j-1}  \,   \lambda^{-2n+ 2j}
		\\&	+ \sum_{k=i+1}^{j}   x_{k-(i+1)+1}(-1)^{n-k} \, \lambda^{-2n+k+j-1}\Bigg(  \binom{n-k-1}{(j-k) + 1 } + \binom{n-k-1}{j-k } \Bigg).
	\end{align*}
	Using Pascal's identity \eqref{eq-Pascal-rule},  for all $1\leq i < j \leq n-1$,   we have 
	\begin{align*}
		\lambda^{-2} g_{i+1,j+1} + \lambda^{-1}g_{i+1,j} &= (-1)^{n-j-1} x_{j-i+1}   \lambda^{-2n+2j}
		\\& +  \sum_{k=i+1}^{j}  (-1)^{n-k+1} x_{k-(i+1)+1}\lambda^{-2n+k+j-1} \binom{n-k}{j-k +1}.
	\end{align*}
	By substituting $r = k-1$,  for all $1\leq i < j \leq n-1$,  we have  
	\begin{align*}
		\lambda^{-2} g_{i+1,j+1} + \lambda^{-1}g_{i+1,j} &= (-1)^{n-j-1} x_{j-i+1}   \lambda^{-2n+2j}
		\\&+	\sum_{r=i}^{j-1}  (-1)^{n-r-1}  x_{r-i+1} \, \lambda^{-2n+r+j} \binom{n-r-1}{j-r}.
	\end{align*}
	This implies 
	\begin{equation}\label{eq-3-relation-toeplitz-reverser}
		\lambda^{-2} g_{i+1,j+1} + \lambda^{-1}g_{i+1,j} = 	\sum_{r=i}^{j}  (-1)^{n-r-1}  x_{r-i+1} \, \lambda^{-2n+r+j} \binom{n-r-1}{j-r},
	\end{equation}
	where $1\leq i < j \leq n-1$.
	Therefore,   from Equations \eqref{eq-1-relation-toeplitz-reverser} and  \eqref{eq-3-relation-toeplitz-reverser},  we have
	\begin{equation}\label{eq-4-relation-toeplitz-reverser}
		g_{i,j} = - ( \lambda^{-2} g_{i+1,j+1} + \lambda^{-1}g_{i+1,j})  \hbox{ for all }  1\leq i < j \leq n-1.
	\end{equation}
	Using  Equations \eqref{eq-2-relation-toeplitz-reverser}  and  \eqref{eq-4-relation-toeplitz-reverser}, we have
	$ g=  \mathrm{Toep}_{n}(\mathbf{x})\,  \Omega( \lambda,n) \,  = \, \Omega( \lambda,\mathbf{x}, n)$.
	This proves the first part of the lemma.
	
Now, recall the well-known fact that Toeplitz matrix $\mathrm{Toep}_{n}(\mathbf{x})$ commutes with the Jordan block $\mathrm{J}(\lambda, n)$ and its inverse; see \cite[Proposition 3.1.2]{COV}, {\cite[Theorem 9.1.1]{GLR}}.  Thus, using \lemref{lem-conjugacy-reverser-unipotent}, we get
	\begin{align*}
		\Big( \mathrm{Toep}_{n}(\mathbf{x})\, \Omega( \lambda, n) &\Big) \mathrm{J}(\lambda^{-1}, n) = \mathrm{Toep}_{n}(\mathbf{x})\, \Big( \Omega( \lambda,n) \, \mathrm{J}(\lambda^{-1}, n) \Big) 
	\\&	=\mathrm{Toep}_{n}(\mathbf{x})\,  \Big(\Big(\mathrm{J}(\lambda, n)\Big)^{-1} \,   \Omega( \lambda, n) \Big) =  \Big(\mathrm{J}(\lambda, n)\Big)^{-1} \,  \Big( \mathrm{Toep}_{n}(\mathbf{x})\, \Omega( \lambda, n) \Big).
	\end{align*}
	Therefore,  $ \Omega( \lambda,\mathbf{x}, n) \, \mathrm{J}(\lambda^{-1}, n) =\Big(\mathrm{J}(\lambda, n)\Big)^{-1} \,  \Omega( \lambda,\mathbf{x}, n) $.
	This completes the proof.
\end{proof}

The following result gives us the reverser set for certain reversible Jordan forms in $ {\GL}(n,\C)$. We refer to  \defref{def-special-matrix-omega} for the definition of $ \Omega(\lambda,  n)$. 

\begin{proposition} \label{prop-main-reverser-complex-Jodan-subform}
	Let $\mu, \lambda \in \C \setminus \{0\}$ such that $ \mu \in \{- 1, +1\}$ and $ \lambda \notin \{-1, +1\}$. 
	Then the following statements hold.
	\begin{enumerate} 
		\item \label{part-thm-main-reverser-complex-jodan-type-1}
		$\mathcal{R}_{{\GL}(n,\C)} (\mathrm{J}(\mu, n)) =  \Big\{   \Omega( \mu,\mathbf{x}, n) \in {\GL}(n,\C) \mid 
		\mathbf{x} \in \C^{n} \hbox {  with } x_1 \neq 0  \Big\}.$
		\item 
		\label{part-thm-main-reverser-complex-jodan-type-2}
		$\mathcal{R}_{{\GL}(2n,\C) }(\mathrm{J}(\lambda, n) \oplus \mathrm{J}(\lambda^{-1}, n))  =  $ $$  \hspace{2.7cm}\Big\{  \begin{psmallmatrix}
			& \Omega( \lambda,\mathbf{x}, n) \\
			\Omega( \lambda^{-1},\mathbf{y}, n)  &  \\
		\end{psmallmatrix} \in {\GL}(2n,\C) \mid  \mathbf{x},
		\mathbf{y} \in \C^{n} \hbox {  with } x_1, y_1 \neq 0  \Big\}.$$
	\end{enumerate}
\end{proposition}
\begin{proof} Suppose   $A \in {\GL}(n,\C)$ is a reversible element  and $ h   \in \mathcal{R}_{{\GL}(n,\C) }(A) $ is a  reverser for $A$.
Recall that the set $\mathcal{R}_{{\GL}(n,\C) }(A)$ of  reversers  of $A$ is a right coset of the centraliser  $\mathcal{Z}_{{\GL}(n,\C) }(A)$ of $A$. Therefore, if $g \in \mathcal{R}_{{\GL}(n,\C) }(A)$, i.e.,  $gAg^{-1} =A^{-1}$, then  $g = fh$ for some $f \in \mathcal{Z}_{{\GL}(n,\C) }(A))$. In other words, 
	\begin{equation}\label{eq-1-rev-complex-Jordan-subform}
		\mathcal{R}_{{\GL}(n,\C) }(A) = \mathcal{Z}_{{\GL}(n,\C) }(A)) \, h.
	\end{equation}
Now, using the above observation,  we prove the result as follows.
	
	\textit{Proof of (\ref{part-thm-main-reverser-complex-jodan-type-1}).}
	Let $ g   \in \mathcal{R}_{{\GL}(n,\C) }(\mathrm{J}(\mu, n)) $ and $h = \Omega(\mu,  n)$. Then  Lemma \ref{lem-conjugacy-reverser-unipotent}  implies  that $ h  \in \mathcal{R}_{{\GL}(n,\C) }(\mathrm{J}(\mu, n))$.   
	Using Equation \eqref{eq-1-rev-complex-Jordan-subform} and \cite[Proposition 3.1.2]{COV},  there exists a Toeplitz matrix $\mathrm{Toep}_{n}(\mathbf{x})$ such that 
	$$ g = \mathrm{Toep}_{n}(\mathbf{x}) \, h ,  \hbox{ where } \mathbf{x}=(x_{1},x_{2},\dots, x_{n}) \in \C^{n} \hbox{ and }  x_{1} \neq 0.$$
The first part of the proposition now follows from \lemref{lem-relation-toep-omega}.
	
	\textit{Proof of (\ref{part-thm-main-reverser-complex-jodan-type-2}).}
	Let $ g   \in \mathcal{R}_{{\GL}(2n,\C) }(\mathrm{J}(\lambda, n) \oplus \mathrm{J}(\lambda^{-1}, n)) $ and   $h =  \begin{psmallmatrix}
		&  \Omega( \lambda,n) \\
		\Omega( \lambda^{-1},n)   &  
	\end{psmallmatrix}$. Using Lemma \ref{lem-conjugacy-reverser-unipotent} and  \lemref{lem-relation-omega-inverse}, we have  $ h  \in \mathcal{R}_{{\GL}(2n,\C) }(\mathrm{J}(\lambda, n) \oplus \mathrm{J}(\lambda^{-1}, n))$.   
	Since $\lambda \neq \lambda^{-1}$,  Equation \eqref{eq-1-rev-complex-Jordan-subform}  and {\cite[Theorem 9.1.1,   Corollary 9.1.3]{GLR}} imply that there exist  Toeplitz matrices  $\mathrm{Toep}_{n}(\mathbf{x})$ and $\mathrm{Toep}_{n}(\mathbf{y})$  in $ {\GL}(n,\C)$ such that 
	$$ g=  \begin{psmallmatrix}
		\mathrm{Toep}_{n}(\mathbf{x}) &  \\
		&  \mathrm{Toep}_{n}(\mathbf{y})\\ 
	\end{psmallmatrix}  \begin{psmallmatrix}
		&  \Omega( \lambda,n) \\
		\Omega( \lambda^{-1},n)   &  \\ 
	\end{psmallmatrix},$$ 
	where $\mathbf{x}=(x_{1},x_{2},\dots, x_{n})$ and $\mathbf{y}=(y_{1},y_{2},\dots, y_{n})$ are in $\C^{n}$ such that both $x_{1} $ and $y_{1} $ are non-zero.   Now,  the \lemref{lem-relation-toep-omega} implies that $g$ has the following form
	$$g=  \begin{psmallmatrix}
		&   \Omega( \lambda,\mathbf{x}, n) \\
		\Omega( \lambda^{-1},\mathbf{y}, n)   &  \\ 
	\end{psmallmatrix}. $$ 
	This completes the proof.
\end{proof}

For more clarity, using \defref{def-general-special-matrix-omega}, we can rewrite the \propref{prop-main-reverser-complex-Jodan-subform}(\ref{part-thm-main-reverser-complex-jodan-type-1}) as in the following result.
\begin{corollary}
	Let $ g = [ g_{i,j} ]_{1 \leq i, j \leq n}  \in \mathcal{R}_{{\GL}(n,\C) }( \mathrm{J}(\mu, n)) $,  where $\mu  \in \{- 1, +1\}$. The entries of matrix $g$ satisfy the following conditions
	
	\begin{enumerate} 
		\item $g_{i,j} = 0$ for all $1\leq i,j \leq n$ such that $j<i$,
		\item  $g_{n,n}$ is a non-zero complex number (since $\det(g) \neq 0$),
		\item  $g_{i,n} $  is an arbitrary complex number for all  $1\leq i\leq n-1$,
		
		\item  For all $ 1\leq i \leq j \leq n-1 $, we have
		$$g_{i,j} = -  \mu^{-1} g_{i+1,j} - \mu^{-2} g_{i+1,j+1}.$$
	\end{enumerate}
\end{corollary}

\section{Strong reversibility  of certain Jordan forms in ${\SL}(n,\C)$}\label{sec-str-rev-Jordan-forms}

We will now examine the determinant of involutions in the reverser set obtained in  \propref{prop-main-reverser-complex-Jodan-subform}.

\begin{lemma} \label{lem-reverser-involution-subform}
	Let $\mu, \lambda \in \C \setminus \{0\}$ such that $ \mu \in \{- 1, +1\}$ and $ \lambda \notin \{- 1, +1\}$. Then the following statements hold.
	\begin{enumerate} 
		\item \label{part-reverser-involtuion-subform-type-1} Let $g \in \mathcal{R}_{{\GL}(n,\C)} (\mathrm{J}(\mu, n))$ be an involution. Then 
		{\small	$$\det(g) =  \begin{cases}
				+1 & \text{if  $n = 4k$}\\
				-1  & \text{if $ n =  4k +2$ }\\
				\pm 1  & \text{if  $n = 4k+1, 4k+3$}  \, (\text {i.e.,  $n$ is odd}),
			\end{cases}$$}
		where $k \in \N  \cup \{0\}$.
		
		\item  \label{part-reverser-involtuion-subform-type-2} Let $g \in \mathcal{R}_{{\GL}(2n,\C)} (\mathrm{J}(\lambda, n) \oplus \mathrm{J}(\lambda^{-1}, n))$ be an involution.  Then 
		$$\det(g) = (-1)^n = {\small \begin{cases}
				+1 & \text{if  $n$  is even}\\
				-1  & \text{if $n$ is odd.}
		\end{cases}}$$
	\end{enumerate}
\end{lemma}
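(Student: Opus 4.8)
The plan is to use the explicit descriptions of the reverser sets from \thmref{thm-main-reverser-complex-jodan-subform} together with the relation $\Omega(\lambda,\mathbf{x},n)=\mathrm{Toep}_n(\mathbf{x})\,\Omega(\lambda,n)$ from \lemref{lem-relation-toep-omega}, and then reduce the determinant computation to understanding the scalars $\det\Omega(\mu,n)$ and $x_1$. For part \eqref{part-reverser-involtuion-subform-type-1}: any $g\in\mathcal{R}_{\GL(n,\C)}(\mathrm{J}(\mu,n))$ has the form $g=\Omega(\mu,\mathbf{x},n)=\mathrm{Toep}_n(\mathbf{x})\Omega(\mu,n)$ with $x_1\neq 0$. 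Since $\mathrm{Toep}_n(\mathbf{x})$ is upper triangular with all diagonal entries equal to $x_1$, and $\Omega(\mu,n)$ is upper triangular with diagonal entries $(-1)^{n-i}\mu^{-2(n-i)}=(-1)^{n-i}$ (using $\mu^2=1$) for $i=1,\dots,n-1$ and $x_{n,n}=1$, one gets
$$\det(g)=x_1^{\,n}\cdot\prod_{i=1}^{n-1}(-1)^{n-i}=x_1^{\,n}\cdot(-1)^{\binom{n}{2}}.$$
Then I would impose the involution condition $g^2=\mathrm{I}_n$, which forces $\det(g)^2=1$, i.e. $\det(g)=\pm 1$; combined with $\det(g)=x_1^n(-1)^{\binom{n}{2}}$ this pins down what $x_1^n$ can be. The key extra input needed is that the involution condition actually forces $x_1\in\{+1,-1\}$ (not merely $x_1^n=\pm 1$): I expect this comes from comparing the top-left entry of $g^2$ — since $g$ is block/upper-triangular, $(g^2)_{n,n}=g_{n,n}^2$ and more usefully looking at the action on the diagonal shows $x_1^2=1$ after using $\Omega(\mu,n)^2=\mathrm{I}_n$ (\lemref{lem-involution-reverser-unipotent}) and $g\,\Omega(\mu,n)=\mathrm{Toep}_n(\mathbf{x})\,\Omega(\mu,n)^2=\mathrm{Toep}_n(\mathbf{x})$. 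Concretely: $g$ involution and $\Omega(\mu,n)$ involution give $g\,\Omega(\mu,n)^{-1}g^{-1}=\Omega(\mu,n)$, and since $g\,\Omega(\mu,n)^{-1}=\mathrm{Toep}_n(\mathbf{x})$ one finds $\mathrm{Toep}_n(\mathbf{x})$ conjugates $\Omega(\mu,n)$ to itself while $\mathrm{Toep}_n(\mathbf{x})^{-1}$ is again Toeplitz; a short argument with $g^2=\mathrm{Toep}_n(\mathbf{x})\Omega(\mu,n)\mathrm{Toep}_n(\mathbf{x})\Omega(\mu,n)=\mathrm{I}_n$ and the fact that $\Omega(\mu,n)\mathrm{Toep}_n(\mathbf{x})\Omega(\mu,n)^{-1}$ must also be upper-triangular Toeplitz (since $\Omega(\mu,n)^{-1}=\Omega(\mu^{-1},n)$ reverses the Jordan block and hence conjugates the centralizer of $\mathrm{J}(\mu,n)$ to itself) yields $x_1^2=1$. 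Once $x_1=\pm1$, $\det(g)=(-1)^{\binom{n}{2}}$ when one works modulo the sign ambiguity from $x_1^n$: for $n$ even $x_1^n=1$ so $\det(g)=(-1)^{\binom n2}$, which is $+1$ when $n\equiv 0\pmod 4$ and $-1$ when $n\equiv 2\pmod 4$; for $n$ odd $x_1^n=x_1=\pm 1$ is genuinely free, giving $\det(g)=\pm 1$. This matches the three cases in the statement.

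For part \eqref{part-reverser-involtuion-subform-type-2}: by \thmref{thm-main-reverser-complex-jodan-subform}\eqref{part-thm-main-reverser-complex-jodan-type-2}, $g=\begin{pmatrix} & \Omega(\lambda,\mathbf{x},n)\\ \Omega(\lambda^{-1},\mathbf{y},n) & \end{pmatrix}$, an anti-block-diagonal $2n\times 2n$ matrix. Its determinant is $(-1)^n\det\Omega(\lambda,\mathbf{x},n)\det\Omega(\lambda^{-1},\mathbf{y},n)$ — the factor $(-1)^n$ being the sign of the permutation swapping the two blocks of size $n$. Now $\det\Omega(\lambda,\mathbf{x},n)=x_1^n\prod_{i=1}^{n-1}(-1)^{n-i}\lambda^{-2(n-i)}$ and similarly for the other factor with $y_1$ and $\lambda^{-1}$; the product of the two $\lambda$-power products telescopes to $1$ (the $\lambda^{-2(n-i)}$ and $\lambda^{+2(n-i)}$ cancel), the two sign products give $(-1)^{2\binom n2}=1$, so $\det(g)=(-1)^n x_1^n y_1^n=(-1)^n(x_1y_1)^n$. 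Imposing $g^2=\mathrm{I}_{2n}$: the $(1,1)$-block of $g^2$ is $\Omega(\lambda,\mathbf{x},n)\Omega(\lambda^{-1},\mathbf{y},n)=\mathrm{I}_n$, and since $\Omega(\lambda,\mathbf{x},n)=\mathrm{Toep}_n(\mathbf{x})\Omega(\lambda,n)$, $\Omega(\lambda^{-1},\mathbf{y},n)=\mathrm{Toep}_n(\mathbf{y})\Omega(\lambda^{-1},n)$ and $\Omega(\lambda,n)\mathrm{Toep}_n(\mathbf{y})=\mathrm{Toep}_n(\mathbf{y}')\Omega(\lambda,n)$ for some Toeplitz $\mathbf{y}'$ (since $\Omega(\lambda,n)$ normalizes the Toeplitz algebra, as it conjugates $\mathrm{J}(\lambda,n)$ to $\mathrm{J}(\lambda,n)^{-1}$ — or more directly using \lemref{lem-relation-omega-inverse} that $\Omega(\lambda,n)^{-1}=\Omega(\lambda^{-1},n)$), this forces the product of the leading coefficients to be $1$, i.e. $x_1y_1=1$. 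Hence $(x_1y_1)^n=1$ and $\det(g)=(-1)^n$, as claimed.

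I anticipate the main obstacle is the bookkeeping in step two of each part — namely verifying rigorously that the involution constraint $g^2=\mathrm{I}$ forces $x_1^2=1$ (part (1)) respectively $x_1y_1=1$ (part (2)), rather than just the weaker determinant constraint. The cleanest route is to observe that $g$ is an involution reversing $A$ iff $gA^{-1}=g^{-1}A$ combined with $g^{-1}=g$, so $gA^{-1}g=A$; writing $g=h_0\,\Omega$ where $\Omega=\Omega(\mu,n)$ (resp. the anti-diagonal $\Omega$-matrix) is the fixed involutive reverser from Table \ref{table:1} and $h_0$ lies in the centralizer $\mathcal{Z}(A)$, the condition $g^2=\mathrm{I}$ becomes $h_0\,(\Omega h_0\Omega^{-1})=\mathrm{I}$, i.e. $\Omega h_0\Omega^{-1}=h_0^{-1}$; since $\Omega$ normalizes $\mathcal{Z}(A)$ (it is itself a reverser, so conjugation by $\Omega$ sends $\mathcal{Z}(A)$ to $\mathcal{Z}(A^{-1})=\mathcal{Z}(A)$) and acts on the one-dimensional space of leading Toeplitz coefficients by a scalar of absolute value forced to be $1$ by $\Omega^2=\mathrm{I}$, this scalar relation on leading coefficients reduces to exactly $x_1^2=1$ (resp. $x_1 y_1=1$). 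All remaining computations — the diagonal-product formula for $\det\Omega(\lambda,\mathbf{x},n)$, the telescoping of $\lambda$-powers, and the permutation sign $(-1)^n$ for the block swap — are routine and I would only state them.
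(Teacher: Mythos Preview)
Your approach is essentially the same as the paper's: parametrize $g$ via \thmref{thm-main-reverser-complex-jodan-subform}, compute $\det(g)=x_1^{\,n}(-1)^{\binom{n}{2}}$ (resp.\ $(-1)^n(x_1y_1)^n$), and then use $g^2=\mathrm{I}$ to pin down the remaining scalar. The only real difference is in how you extract the constraints $x_1^2=1$ and $x_1y_1=1$: you propose a structural argument via $\Omega$ normalizing the Toeplitz centralizer and the relation $h_0(\Omega h_0\Omega^{-1})=\mathrm{I}$, whereas the paper simply reads off a single matrix entry. Since $g=\Omega(\mu,\mathbf{x},n)$ is upper triangular with $g_{n,n}=x_1$, the equality $(g^2)_{n,n}=x_1^2=1$ is immediate; likewise in part~(2) the blocks $\Omega(\lambda,\mathbf{x},n)$ and $\Omega(\lambda^{-1},\mathbf{y},n)$ are upper triangular with $(n,n)$-entries $x_1,y_1$, so $(g^2)_{2n,2n}=y_1x_1=1$ directly. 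Your detour through the normalizer is correct but unnecessary --- you even note the entry argument yourself before abandoning it for the ``more useful'' route. Just keep the one-line entry computation and drop the rest.
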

\begin{proof}
	\textit{Proof of (\ref{part-reverser-involtuion-subform-type-1}).}
Using \propref{prop-main-reverser-complex-Jodan-subform}(\ref{part-thm-main-reverser-complex-jodan-type-1}), we can write $g = [g_{i,j}]_{1 \leq i, j \leq n} =  \Omega( \mu,\mathbf{x}, n)$, where $ \mathbf{x}=(x_{1},x_{2},\dots, x_{n}) \in \C^{n}$ such that $  x_{1} \neq 0$. Since $ \mu \in \{- 1, +1\}$, using  Equation \eqref{eq-2-relation-toeplitz-reverser}, we get that $ g$ is an upper triangular matrix with diagonal entries  $g_{i,i} =x_{1} (-1)^{(n-i)}  $ for all  $ 1\leq i \leq n $. This implies 
	$$ \det(g) = (x_{1})^n  \prod_{i=1}^n (-1)^{n-i} =  (x_{1})^n  (-1)^{\sum_{k=1}^n (n-i)} = (x_{1})^n (-1)^{\frac{n(n-1)}{2 }} .$$  
	Therefore,  $\det(g)$ depends only on $ x_{1}$ and $n$.
	Since $g$ is an involution with an upper triangular form, we have $(g_{n,n})^ 2= (x_1)^2 =1$. This implies  $$x_1 \in \{- 1, +1\}. $$
	The proof now follows from the equation  $ \det(g) = (x_{1})^n$  $(-1)^{\frac{n(n-1)}{2 }} $.
	
	\textit{Proof of (\ref{part-reverser-involtuion-subform-type-2}).}
	Using   \propref{prop-main-reverser-complex-Jodan-subform}(\ref{part-thm-main-reverser-complex-jodan-type-2}), we can write 
	\begin{equation}\label{equ-lem-reverser-involution-subform-2}
		g = 
		\begin{psmallmatrix}
			&   \Omega( \lambda,\mathbf{x}, n) \\
			\Omega( \lambda^{-1},\mathbf{y}, n)   &  
		\end{psmallmatrix},
	\end{equation}
	where $\mathbf{x}=(x_{1},x_{2},\dots, x_{n})$ and $\mathbf{y}=(y_{1},y_{2},\dots, y_{n})$ are in $\C^{n}$ such that both $x_{1} $ and $y_{1} $  are non-zero. Let $ \Omega( \lambda,\mathbf{x}, n) = [a_{i,j}]_{1 \leq i, j \leq n} $ and  $	\Omega( \lambda^{-1},\mathbf{y}, n) =  [b_{i,j}]_{1 \leq i, j \leq n}$.
Note that $ \Omega( \lambda,\mathbf{x}, n)$ and $	\Omega( \lambda^{-1},\mathbf{y}, n)$ are both upper triangular matrices with diagonal entries $ a_{i,i} = ( x_{1})  (-\lambda ^{-2})^{(n-i)}  $ and  $ b_{i,i} = ( y_{1}) (-\lambda ^{2})^{(n-i)}  $, respectively,  where  $ 1\leq i \leq n $. This implies
$$\det(\Omega( \lambda,\mathbf{x}, n)) = ( x_{1})^n  \Big[ \prod_{i=1}^n   (-\lambda ^{-2})^{(n-i)}  \Big] =  ( x_{1})^n  \Big[ (-\lambda^{-2})^{\sum_{i=1}^n (n-i)}  \Big].$$
	Thus, $  \det(\Omega( \lambda,\mathbf{x}, n)) = (x_{1})^n  \Big[ (-\lambda^{-2})^{\frac{n(n-1)}{2 }} \Big] $.
	Similarly, we get  $$\det(	\Omega( \lambda^{-1},\mathbf{y}, n) ) = ( y_{1})^n  \Big[ (-\lambda^{2})^{\frac{n(n-1)}{2 }}  \Big].$$
	Since $ \det(g) =  \det    \begin{psmallmatrix}
		& \Omega( \lambda,\mathbf{x}, n) \\
		\Omega( \lambda^{-1},\mathbf{y}, n) & 
	\end{psmallmatrix}$, we have   $$ \det(g) =   \det \begin{psmallmatrix}
		\Omega( \lambda,\mathbf{x}, n) &   \\
		& 	\Omega( \lambda^{-1},\mathbf{y}, n)  
	\end{psmallmatrix}   \begin{psmallmatrix}
		& \mathrm{I}_{n} \\
		\mathrm{I}_{n} &   
	\end{psmallmatrix}.$$
	Thus, $ \det(g) =  \det(\Omega( \lambda,\mathbf{x}, n)) \det(	\Omega( \lambda^{-1},\mathbf{y}, n)) (-1)^{n} $.  This implies
	$$ \det(g)  = (-1)^{n}  (- x_{1})^n ( -y_{1})^n = (-1)^{n}  ( x_{1}  y_{1})^n.$$
Since $g \in {\GL}(2n,\C)$ is an involution given in Equation \eqref{equ-lem-reverser-involution-subform-2}, where both $\Omega( \lambda,\mathbf{x}, n)$ and $\Omega( \lambda^{-1},\mathbf{y}, n)$  are upper triangular matrices in ${\GL}(n,\C)$, we have  
	$ (g^2)_{2n,2n}= x_1y_1 =1.$
	This implies	$$ \det(g)  =(-1)^{n}  ( x_{1}  y_{1})^n = (-1)^n.$$ 
	This completes the proof.
\end{proof}

\begin{remark}\label{rem-construction-rev-inv}
It is worth mentioning that the converse of \lemref{lem-reverser-involution-subform} also holds, and it follows by constructing a suitable reverser using Table \ref{table:1} and \propref{prop-main-reverser-complex-Jodan-subform}, which is also an involution. This construction is useful in proving the converse of \thmref{thr-main-strong-rev-SL(n, C)}.
\end{remark}

The next result helps us understand the strong reversibility in ${\SL}(n,\C)$.

\begin{proposition}\label{prop-str-rev-subform-SL}
	Let $\mu, \lambda \in \C \setminus \{0\}$ such that $ \mu \in \{- 1, +1\}$ and $ \lambda \notin \{-1,+1\}$.  Let $A \in {\SL}(n,\C)$ and $B \in {\SL}(2n,\C)$ denote the Jordan forms  $ \mathrm{J}(\mu, n)$ and $ \mathrm{J}(\lambda, n) \oplus \mathrm{J}(\lambda^{-1}, n)$, respectively.  Then the following statements hold.
	\begin{enumerate} 
		\item \label{part-1-prop-str-rev-subform-SL}  $A$ is reversible in ${\SL}(n,\C)$ for all $n \in \N$.
		Moreover, $A$ is strongly reversible   in ${\SL}(n,\C)$ if and only if $n \neq 4k+2$, where $k \in \N \cup \{0\}$. 
		
		\item  \label{part-2-prop-str-rev-subform-SL}  $B$ is reversible in ${\SL}(2n,\C)$ for all $n\in \N$. Moreover, 
		$B$ is strongly  reversible in ${\SL}(2n,\C)$ if and only if $n$ is even.
	\end{enumerate}
\end{proposition}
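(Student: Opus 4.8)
The plan is to reduce both statements to the determinant computations already performed in \lemref{lem-reverser-involution-subform}. The reversibility assertions are the quickest part: $\mathrm{J}(\mu,n)$ with $\mu\in\{-1,+1\}$ is a ``singleton'' block and $\mathrm{J}(\lambda,n)\oplus\mathrm{J}(\lambda^{-1},n)$ with $\lambda\notin\{-1,+1\}$ is an admissible ``pair'', so \lemref{lem-reversible-SL(n,F)} gives that $A$ is reversible in ${\SL}(n,\C)$ whenever $\mathrm{J}(\mu,n)\in{\SL}(n,\C)$ and $B$ is reversible in ${\SL}(2n,\C)$ for every $n$; concretely, \lemref{lem-conjugacy-reverser-unipotent} and \lemref{lem-relation-omega-inverse} produce the explicit reversing elements $\Omega(\mu,n)$ and $\bigl(\begin{smallmatrix}0 & \Omega(\lambda,n)\\ \Omega(\lambda^{-1},n) & 0\end{smallmatrix}\bigr)$ in ${\GL}$, which one rescales by a central scalar to land in the special linear group.

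For strong reversibility I would use the reformulation that $C\in{\SL}(m,\C)$ is strongly reversible in ${\SL}(m,\C)$ if and only if $\mathcal{R}_{{\GL}(m,\C)}(C)$ contains an involution of determinant $1$, and then read off which determinants are available from \lemref{lem-reverser-involution-subform}. For part~(\ref{part-1-prop-str-rev-subform-SL}) I split on $n\bmod 4$. Note first that when $n$ is odd, $\det\mathrm{J}(\mu,n)=\mu^{n}=\mu$, so $A\in{\SL}(n,\C)$ forces $\mu=1$. If $n=4k$, the involution $\Omega(\mu,n)$ (an involution by \lemref{lem-involution-reverser-unipotent}, a reverser by \lemref{lem-conjugacy-reverser-unipotent}) already has determinant $+1$ by \lemref{lem-reverser-involution-subform}(\ref{part-reverser-involtuion-subform-type-1}), so it witnesses strong reversibility in ${\SL}(n,\C)$. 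If $n=4k+2$, the same lemma states that \emph{every} involution in $\mathcal{R}_{{\GL}(n,\C)}(\mathrm{J}(\mu,n))$ has determinant $-1$, hence none lies in ${\SL}(n,\C)$ and $A$ is not strongly reversible there. If $n$ is odd (so $\mu=1$), I would observe that $-\Omega(1,n)$ is again an involution reversing $\mathrm{J}(1,n)$ --- because negation preserves both $g^{2}=\mathrm{I}_{n}$ and $g\,\mathrm{J}(1,n)\,g^{-1}=\mathrm{J}(1,n)^{-1}$ --- and that in odd dimension $\det(-\Omega(1,n))=-\det(\Omega(1,n))$, so exactly one of $\pm\Omega(1,n)$ has determinant $1$ and therefore belongs to ${\SL}(n,\C)$; this handles both $n\equiv1$ and $n\equiv3\pmod 4$ and completes part~(\ref{part-1-prop-str-rev-subform-SL}). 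For part~(\ref{part-2-prop-str-rev-subform-SL}) I take $g_{0}=\bigl(\begin{smallmatrix}0 & \Omega(\lambda,n)\\ \Omega(\lambda^{-1},n) & 0\end{smallmatrix}\bigr)$, which is an involution reversing $B$ (using $\Omega(\lambda^{-1},n)=\Omega(\lambda,n)^{-1}$ from \lemref{lem-relation-omega-inverse}, and \lemref{lem-conjugacy-reverser-unipotent} blockwise as in Table~\ref{table:1}); by \lemref{lem-reverser-involution-subform}(\ref{part-reverser-involtuion-subform-type-2}), $\det g_{0}=(-1)^{n}$, so for $n$ even we get $g_{0}\in{\SL}(2n,\C)$ and $B$ is strongly reversible, whereas for $n$ odd \emph{every} involution in $\mathcal{R}_{{\GL}(2n,\C)}(B)$ has determinant $-1$ and, since the ambient dimension $2n$ is even, the negation trick does not change the determinant, so there is no determinant-$1$ involution reversing $B$ and $B$ is not strongly reversible in ${\SL}(2n,\C)$.

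The only step that is not purely mechanical is the case $n\equiv3\pmod 4$ of part~(\ref{part-1-prop-str-rev-subform-SL}): here $\Omega(\mu,n)$ itself has determinant $-1$, so one genuinely needs a second involution in the reverser set, which is why I invoke the sign trick $g\mapsto-g$. I expect this to be the main (mild) obstacle. It is worth noting that the very same trick \emph{fails} for $B$ because the ambient dimension is always even there, and this is precisely what accounts for the asymmetry between the ``$n\neq4k+2$'' condition in part~(\ref{part-1-prop-str-rev-subform-SL}) and the ``$n$ even'' condition in part~(\ref{part-2-prop-str-rev-subform-SL}).
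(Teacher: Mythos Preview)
Your proposal is correct and follows essentially the same route as the paper: both arguments establish reversibility via an explicit reverser (the paper via $\Omega(\mu,\mathbf{x},n)$ with a carefully chosen $x_{1}$, you via a scalar rescaling of $\Omega(\mu,n)$, which is the same thing since $\Omega(\mu,(x_{1},0,\dots,0),n)=x_{1}\,\Omega(\mu,n)$), and both obtain the obstruction to strong reversibility directly from \lemref{lem-reverser-involution-subform}. Your ``sign trick'' $g\mapsto -g$ in the odd-$n$ case is exactly the paper's choice $x_{1}=\pm 1$, and your remark that it fails for $B$ because the ambient dimension $2n$ is even is a nice clarification the paper leaves implicit; your observation that $A\in{\SL}(n,\C)$ with $n$ odd forces $\mu=1$ is also a small point the paper does not spell out.
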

\begin{proof}
	\textit{Proof of  (\ref{part-1-prop-str-rev-subform-SL}).}
	Let	$g =   \Omega( \lambda,\mathbf{x}, n)$, where $ \mathbf{x}=(x_{1},0,\dots,0) \in \C^{n}$ such that $$  x_{1}  = {\small \begin{cases}
			-1  & \text {if $n  = 4k , 4k+1 $} \\
			+	1  & \text{if $n = 4k +3 $}\\
			-	\iota & \text{if $n = 4k +2 $}
		\end{cases},} \hbox { where } \iota^2=-1, k \in \N \cup \{0\}.$$		
	Then $ gAg^{-1} = A^{-1}$ and $g  \in {\SL}(n,\C)$.
	Thus $A$ is reversible in  ${\SL}(n,\C)$ for all $n \in \N $.
	Furthermore,   if $n \neq 4k+2$, then $g$ is also an involution in $ {\SL}(n,\C)$, where $k \in \N \cup \{0\}$.  This implies that $A$ is strongly reversible if $n \neq 4k+2$.
	
Next, consider the case when $n = 4k+2$,  where $k \in \N \cup \{0\}$.  Suppose that $A$ is strongly reversible. Then there exists $h \in {\SL}(n,\C)$ such that $ hAh^{-1} = A^{-1}$ and $h^2 = \mathrm{I}_n$.  In view of the Lemma \ref{lem-reverser-involution-subform}(\ref{part-reverser-involtuion-subform-type-1}), we have $\det(h) = -1$, which is a contradiction.  This proves the first part of the result.

	\textit{Proof of  (\ref{part-2-prop-str-rev-subform-SL}).}
	Let $	g = 
	\begin{psmallmatrix}
		&   \Omega( \lambda,\mathbf{x}, n) \\
		\Omega( \lambda^{-1},\mathbf{y}, n)   &  
	\end{psmallmatrix},$
	where $\mathbf{x}=(x_{1},0,\dots,0)$ and $  \mathbf{y}=(y_{1},0,\dots,0)$ are in $\C^{n}$ such that
	$$  x_{1} = 1,  \hbox { and } y_{1} = {\small \begin{cases}
			1  & \text{if $n$  is even} \\
			-1  & \text{if $n$ is odd }
		\end{cases}.}$$
Then $ gAg^{-1} = A^{-1}$ and $g  \in {\SL}(2n,\C)$. Thus $A$ is reversible in  ${\SL}(2n,\C)$ for all $n \in \N $. Furthermore,  if $n$ is even, then $g$ is also an involution in  ${\SL}(2n,\C)$. This implies that $A$ is strongly reversible in  ${\SL}(2n,\C)$  if $n$ is even.
	
Next, consider the case when $n$ is odd. Suppose that $A$ is strongly reversible. Then there exists $h  \in {\SL}(2n,\C)$ such that $ hAh^{-1} = A^{-1}$ and $h^2 = \mathrm{I}_{2n}$.  In view of   \lemref{lem-reverser-involution-subform}  (\ref{part-reverser-involtuion-subform-type-2}), we have $\det(h) = -1$, which is a contradiction.  This completes the proof.
\end{proof}

\subsection{Strongly reversible semisimple elements} The following lemma classifies strongly reversible semisimple (i.e., diagonalisable) elements in ${\SL}(n,\C)$. 
\begin{proposition} \label{prop-semi-strong-SL(n, C)}
A reversible semisimple element in ${\SL}(n,\C)$ is strongly reversible if and only if either $ \{\pm 1\} $ is an eigenvalue or $ n= 4k$ for some $k \in \N$. 
\end{proposition}

\begin{proof}
Let $ A \in  {\SL}(n,\C)$ be a reversible semisimple element. If $1$ or $-1$ is an eigenvalue of $A$, we can construct an involution in ${\SL}(n,\C)$ that conjugates $A$ to $A^{-1}$.   Therefore $ A $ is strongly reversible in ${\SL}(n,\C)$; see \remref{rem-construction-rev-inv}. Suppose $ 1$ and $-1 $ are not eigenvalues of $ A $. Then    \thmref{thm-reversible-SL(n, C)} implies that $ n=4m $ for some $ m\in \N $, and up to conjugacy, we can assume $ A= {\rm diag}(\lambda_1, \dots,\lambda_{2m},  \lambda_1^{-1},\dots,\lambda_{2m}^{-1})$.  Consider the involution $ g=\begin{psmallmatrix}
		&{\rm I}_{2m}\\
		{\rm I}_{2m}&\\
	\end{psmallmatrix} $ in ${\SL}(n,\C)$. Then $gAg^{-1}=A^{-1}$. Hence, $A$ is strongly reversible  in ${\SL}(n,\C)$.
	
Conversely,	suppose that $A $ is strongly reversible ${\SL}(n,\C)$ such that $ 1$ and $-1 $ are not eigenvalues of $ A $.  In view of  \thmref{thm-reversible-SL(n, C)}, up to conjugacy, we can  assume that $A$ has the form
$$ A = \Big( \lambda_1 \mathrm{I}_{m_1} \oplus {\lambda_1}^{-1} \mathrm{I}_{m_1} \Big) \oplus \Big( \lambda_2 \mathrm{I}_{m_2} \oplus {\lambda_2}^{-1} \mathrm{I}_{m_2} \Big) \oplus \dots \oplus \Big( \lambda_k \mathrm{I}_{m_k} \oplus {\lambda_k}^{-1} \mathrm{I}_{m_k} \Big),$$  where  $ \sum_{i=1}^{k}2 (m_i)  = n,  \lambda_s \neq \lambda_t $ or $\lambda_t^{-1} $ for $ s \neq t$, and $\lambda_i \notin \{- 1, +1\}$ for all $1 \leq i,s,t \leq k$.
Let  $g \in {\SL}(n,\C)$ be an  involution  such that $gAg^{-1}=A^{-1}$. 	By comparing each entry of the matrix equation $gA=A^{-1}g$ and using the conditions satisfied by each $\lambda_i$, we obtain that $g$ has the following block diagonal form
	$$g =   \begin{psmallmatrix}
		& g_1 \\
		\widetilde{g}_1 &  \\ 
	\end{psmallmatrix} \oplus \begin{psmallmatrix}
		& g_2 \\
		\widetilde{g}_2 &  \\ 
	\end{psmallmatrix} \oplus \dots \oplus \begin{psmallmatrix}
		& g_k \\
		\widetilde{g}_k &  \\ 
	\end{psmallmatrix},$$ where $g_i, \widetilde{g}_i \in  \mathrm{GL}(m_i,\C)$ for all $1 \leq i \leq k$.
	Since $g$ is an involution, we have $ \widetilde{g}_i = g_i^{-1}$ for all $1 \leq i \leq k$.
	This implies  $$\mathrm{det}(g) = \prod_{i=1}^{k} \mathrm{det} \begin{psmallmatrix}
		& g_i \\
		g_i^{-1} &  \\ 
	\end{psmallmatrix}=  \prod_{i=1}^{k} \mathrm{det} \begin{psmallmatrix}
		g_i &  \\
		& g_i^{-1}  \\ 
	\end{psmallmatrix}  \mathrm{det} \begin{psmallmatrix}
		&  \mathrm{I}_{m_i}\\
		\mathrm{I}_{m_i} &   \\ 
	\end{psmallmatrix}.$$
	Therefore, $\mathrm{det}(g) = \prod_{i=1}^{k} (-1)^{m_i} = (-1)^{\sum_{i=1}^{k} m_i} = (-1)^{\frac{n}{2}}$.  Since $ \det g =1 $,  $n$ is even.  This completes the proof.
\end{proof}

\section{Strong  reversibility  of unipotent elements  in ${\SL}(n,\C)$}  \label{sec-str-rev-unipotent}
In this section, we will investigate the strong reversibility of elements in ${\SL}(n,\C)$ with eigenvalues in $\{-1, +1\}$. First, we will focus on unipotent elements in $  \mathrm{SL}(n,\C)$. Before proceeding, we will consider an example demonstrating the complexities of studying the strong reversibility of unipotent elements in ${\SL}(n,\C)$.

\begin{example}\label{examp-str-rev-unipotent-6}
	Let $A= \begin{psmallmatrix}
		1 & 1\\
		0 & 1\\ 
	\end{psmallmatrix}  \oplus \begin{psmallmatrix}
		1 & 1\\
		0 & 1\\ 
	\end{psmallmatrix}   \oplus \begin{psmallmatrix}
		1 & 1\\
		0 & 1\\ 
	\end{psmallmatrix} $ be a unipotent element in  $  \mathrm{SL}(6,\C)$.  Then $A$ is not strongly reversible in $  \mathrm{SL}(6,\C)$. To see this, suppose that $A$ is strongly reversible. Then there exists an involution $g \in  \mathrm{SL}(6,\C)$ such that $ gAg^{-1}=A^{-1} $. Using the equation $gA= A^{-1} g$,  we get  
	\begin{equation}\label{eq-examp-rev-unipotent-6}
		g = \begin{psmallmatrix}
			x_{11} & x_{12} & x_{13} & x_{14} & x_{15} & x_{16} \\
			0 & -x_{11} &  0 & - x_{13} & 0 & -x_{15}\\ 
			x_{31} & x_{32} & x_{33} & x_{34} & x_{35} & x_{36} \\
			0 & -x_{31} &  0 & - x_{33} & 0 & -x_{35}\\ 
			x_{51} & x_{52} & x_{53} & x_{54} & x_{55} & x_{56} \\
			0 & -x_{51} &  0 & - x_{53} & 0 & -x_{55}\\ 
		\end{psmallmatrix}.
	\end{equation}
	Note that from Equation \eqref{eq-examp-rev-unipotent-6}, it is not straightforward to show that if $g$ is an involution, then $g \notin \mathrm{SL}(6,\C)$. However, 
	after suitably permuting rows and columns of matrix $g$,   we get 
	$$\widetilde{g} = S g S^{-1} =  \begin{psmallmatrix}
		x_{11} & x_{13}  & x_{15} & x_{12}  & x_{14} & x_{16} \\
		x_{31} & x_{33}  & x_{35} & x_{32}  & x_{34} & x_{36} \\
		x_{51} & x_{53}  & x_{55} & x_{52}  & x_{54} & x_{56} \\
		0 &  0 &  0 &  -x_{11} &   - x_{13}  & -x_{15}\\ 
		0 &  0 &  0 &  -x_{31} &   - x_{33}  & -x_{35}\\ 
		0 &  0 &  0 &  -x_{51} &   - x_{53}  & -x_{55}
	\end{psmallmatrix},  \hbox{ where } S \in    \mathrm{SL}(6,\C). $$
	Thus,  we can write 
	$\widetilde{g} = { \small \left(
		\begin{array}{c|c}
			P & Q\\
			\hline
			& -P
		\end{array}
		\right)}$,   where $P \in    \mathrm{GL}(3,\C)$ and $Q \in    \mathrm{M}(3,\C)$.
	Since $g$ is an involution in  $  \mathrm{SL}(6,\C)$ such that $\widetilde{g}  = Sg S^{-1}$, we have $ \widetilde{g} \in  \mathrm{SL}(6,\C) \hbox { and }   {(\widetilde{g})}^2 =  \mathrm{I}_6$.
	Using  ${(\widetilde{g})}^2 =  \mathrm{I}_6$, i.e.,   $P^2 =  \mathrm{I}_3$, we have
	$ \mathrm{det} (\widetilde{g})  = \mathrm{det}(P)\mathrm{det}(-P)=\mathrm{det}(P^2) (-1)^3 = (-1)^3.$
	This is a contradiction. Hence, $A$ is not  strongly reversible in $  \mathrm{SL}(6,\C)$. 
	\qed
\end{example}

\begin{remark}\label{rem-Weyr-form-significance}
Recall that every unipotent matrix in $  \mathrm{SL}(n,\C)$ is reversible, and we know the structure of the set of corresponding reversers; see \thmref{thm-reversible-SL(n, C)} and \secref{sec-rev-sym-group-GL(n, C)}. We are interested in finding the necessary conditions for strong reversibility of a unipotent matrix. In Example \ref{examp-str-rev-unipotent-6}, we have transformed the reverser $g$ into a block upper triangular form by appropriately permuting its rows and columns. This step is crucial in proving that the unipotent matrix considered in Example \ref{examp-str-rev-unipotent-6} is not strongly reversible. For a unipotent matrix $A$ in Jordan form with Jordan blocks of unequal sizes or a large number of diagonal Jordan blocks, checking the existence of an involution in  $\mathcal{R}_{{\GL}(n,\C) }(A) \cap {\SL}(n,\C)$  becomes challenging. Therefore, using the Jordan canonical form of a matrix to study strongly reversible elements in ${\SL}(n,\C) $ is not an efficient approach. Instead, we can use the Weyr canonical form, which has a more suitable centraliser (and reverser) than the Jordan canonical form; see \propref{prop-centralizer-basic-Weyr-block}. In particular, if a reversible element of ${\SL}(n,\C) $ is in the Weyr canonical form, every reverser has a block upper triangular form. 
\qed
\end{remark}

Now, using the notion of the Weyr form, we investigate the strong reversibility of the unipotent Jordan form, where all the Jordan blocks are of the same size. The following result generalises Example \ref{examp-str-rev-unipotent-6}.

\begin{lemma}\label{lem-general-uni-examp-non-str-rev}
Let $A=  \bigoplus_{ i=1}^{k}   \mathrm{J}(1, 2m)$ be a unipotent Jordan form in $\mathrm{SL}(2mk,\C) $.  Then the following statements hold.
	\begin{enumerate}
\item[(\textit{i})] If $gAg^{-1} = A^{-1}$ and $g^2 =  \mathrm{I}_{2mk}$, then  $\mathrm{det}(g)= (-1)^{mk}$.
\item[(\textit{ii})] If there are an odd number of Jordan blocks of size $2 \pmod 4$ (i.e., $m$ and $ k $ both are odd), then $A$ can not be strongly reversible in  $\mathrm{SL}(2mk,\C)$.
\end{enumerate}
\end{lemma}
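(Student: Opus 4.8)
The plan is to reduce to the Weyr canonical form of $A$ and then use the explicit description of the reversing elements of a homogeneous unipotent Weyr matrix recorded in \eqref{eq-reversing-symmetry-unipotent-Weyr-block}; once (\textit{i}) is established, (\textit{ii}) is immediate. First I would observe that the Jordan structure of $A=\bigoplus_{i=1}^{k}\mathrm{J}(1,2m)$ is the homogeneous partition $(2m,2m,\dots,2m)$ with $k$ parts, so by \lemref{lem-relation-both-partition} (equivalently \thmref{thm-relation-Weyr-Jordan-form}) its Weyr structure is the conjugate partition $(k,k,\dots,k)$ with $2m$ parts; hence the Weyr form of $A$ is $A_W=\mathcal{J}(\mathrm{I}_k,2m)$ as in \eqref{eq-unipotent-Weyr-block}. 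Fixing $P\in\GL(2mk,\C)$ with $A=PA_WP^{-1}$, if $gAg^{-1}=A^{-1}$ and $g^2=\mathrm{I}_{2mk}$ then $h:=P^{-1}gP$ satisfies $hA_Wh^{-1}=A_W^{-1}$, $h^2=\mathrm{I}_{2mk}$, and $\det h=\det g$, so it is enough to show $\det h=(-1)^{mk}$. This replaces the ad hoc row/column permutation used in Example~\ref{examp-str-rev-unipotent-6} by a genuine similarity, which is harmless because $\det$ is conjugation invariant; working in the Weyr basis from the outset is precisely what makes every reversing element block upper triangular.

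Next, by \eqref{eq-reversing-symmetry-unipotent-Weyr-block} (with $m$ replaced by $2m$) I would write $h=\mathrm{Toep}_{2m}(\mathbf{K})\,\Omega(\mathrm{I}_k,2m)$ for some tuple $\mathbf{K}=(K_{1,1},\dots,K_{1,2m})$ of $k\times k$ matrices with $\det K_{1,1}\neq 0$. Both factors are block upper triangular, hence so is $h$; the diagonal blocks of $\mathrm{Toep}_{2m}(\mathbf{K})$ are all $K_{1,1}$, while the defining relation $X_{i,i}=-X_{i+1,i+1}-X_{i+1,i}=-X_{i+1,i+1}$ (using $X_{i+1,i}=\mathrm{O}_k$) together with $X_{2m,2m}=\mathrm{I}_k$ gives the diagonal blocks of $\Omega(\mathrm{I}_k,2m)$ as $X_{i,i}=(-1)^{2m-i}\mathrm{I}_k$. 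Therefore the $i$-th diagonal block of $h$ is $(-1)^{2m-i}K_{1,1}$, so the $i$-th diagonal block of $h^2$ is $K_{1,1}^2$; since $h^2=\mathrm{I}_{2mk}$ this forces $K_{1,1}^2=\mathrm{I}_k$, whence $(\det K_{1,1})^2=1$ and $\det\mathrm{Toep}_{2m}(\mathbf{K})=(\det K_{1,1})^{2m}=1$. On the other hand,
$$\det\Omega(\mathrm{I}_k,2m)=\prod_{i=1}^{2m}\det\!\left((-1)^{2m-i}\mathrm{I}_k\right)=(-1)^{\,k\sum_{i=1}^{2m}(2m-i)}=(-1)^{km(2m-1)}=(-1)^{mk},$$
since $2m-1$ is odd. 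Multiplying the two contributions, $\det h=\det\mathrm{Toep}_{2m}(\mathbf{K})\cdot\det\Omega(\mathrm{I}_k,2m)=(-1)^{mk}$, which proves (\textit{i}).

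Finally, for (\textit{ii}): if $m$ and $k$ are both odd then $mk$ is odd, so by (\textit{i}) every $g\in\GL(2mk,\C)$ with $gAg^{-1}=A^{-1}$ and $g^2=\mathrm{I}_{2mk}$ has $\det g=-1$, hence $g\notin\SL(2mk,\C)$; therefore no involution of $\SL(2mk,\C)$ conjugates $A$ to $A^{-1}$, i.e.\ $A$ is not strongly reversible in $\SL(2mk,\C)$. I do not expect a genuine obstacle: the structural content is entirely in \eqref{eq-reversing-symmetry-unipotent-Weyr-block}, which is already available, and the rest is the short determinant bookkeeping above. The one point deserving care is the block-triangularity of $h$ in the Weyr basis, since it is exactly this feature that lets us extract $K_{1,1}^2=\mathrm{I}_k$ from the hypothesis $h^2=\mathrm{I}_{2mk}$ and thereby kill the contribution of $\mathrm{Toep}_{2m}(\mathbf{K})$ to the determinant.
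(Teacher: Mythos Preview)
Your proposal is correct and follows essentially the same route as the paper: pass to the Weyr form $A_W=\mathcal{J}(\mathrm{I}_k,2m)$, write the conjugated reverser as $h=\mathrm{Toep}_{2m}(\mathbf{K})\,\Omega(\mathrm{I}_k,2m)$ via \eqref{eq-reversing-symmetry-unipotent-Weyr-block}, read off the diagonal blocks $(-1)^{2m-i}K_{1,1}$, use $h^2=\mathrm{I}_{2mk}$ to get $K_{1,1}^2=\mathrm{I}_k$, and compute $\det h=(-1)^{mk}$. The only cosmetic difference is that you split the determinant as $\det\mathrm{Toep}_{2m}(\mathbf{K})\cdot\det\Omega(\mathrm{I}_k,2m)$, whereas the paper evaluates $\prod_i\det\big((-1)^{2m-i}K_{1,1}\big)$ directly; both yield the same conclusion, and part~(\textit{ii}) follows immediately in each case.
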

\begin{proof}
Let $A_{W}  \in \mathrm{SL}(2mk,\C)$ be the Weyr form corresponding to the Jordan form $A$; see \defref{def-basic-Weyr-block}. Then using \thmref{thm-relation-Weyr-Jordan-form}, we have
$A_{W} = \tau  A \tau^{-1} \hbox{ for some }  \tau \in \mathrm{SL}(2mk,\C).$ Moreover, $ A_{W} $  has the  Weyr structure $( \underset{2m\text{-times}}{\underbrace{{k,k,\dots,k} }})$
	and can be  written as follows
	\begin{equation}\label{eq-unipotent-Weyr-block}
		A_{W} = \begin{psmallmatrix}
			\mathrm{I}_{k} &\mathrm{I}_{k}   &  &  &    \\
			& \mathrm{I}_{k} & \mathrm{I}_{k}   &  &     \\
			&  &\ddots   & \ddots  &  \\
			& &  & \mathrm{I}_{k} & \mathrm{I}_{k} \\
			&  &  &  &\mathrm{I}_{k}
		\end{psmallmatrix}.
	\end{equation}
	Now, define $\Omega(\mathrm{I}_{k}, 2m) := [ X_{i,j} ]_{1 \leq i,j \leq 2m}  \in   \mathrm{GL}(2mk,\C)$  such that
	\begin{enumerate} 
		\item $X_{i,j} = \mathrm{O}_{k}$ for all $1\leq i,j \leq m$ such that $j<i$, where $\mathrm{O}_{k}$  denotes the $k \times k$ zero matrix.
		\item $X_{2m,2m} = \mathrm{I}_{k}$ and  $X_{i,2m} = \mathrm{O}_{k}  $  for all  $1\leq i \leq 2m-1$,
		\item  $X_{i,j} =    - X_{i+1,j} -  X_{i+1,j+1} \hbox{ for all }  1\leq i  \leq j \leq 2m-1$.	
	\end{enumerate}
	By using a similar argument as in \lemref{lem-conjugacy-reverser-unipotent}, we have
	\begin{equation}\label{eq-rev-uniptent-Weyr}
		\Omega(\mathrm{I}_{k}, 2m) \,A_{W} = A_W^{-1}  \, \Omega(\mathrm{I}_{k}, 2m).
	\end{equation}
	Furthermore, if $B \in \mathrm{GL}(2mk, \C)$  commutes with $A_{W}$, then  \propref{prop-centralizer-basic-Weyr-block} implies that   
	\begin{equation}\label{eq-centralizer-uniptent-Weyr}
		B=  \mathrm{Toep}_{2m}(\mathbf{K}):= \begin{psmallmatrix}
			{K}_{1,1} &{K}_{1,2}   & \cdots &  \cdots  & \cdots  & {K}_{1,2m} \\
			& {K}_{1,1} & {K}_{1,2}   & \cdots  & \cdots  &  {K}_{1,2m-1} \\
			&  &\ddots   & \ddots  && \vdots \\
			& & & \ddots & \ddots   & \vdots \\
			& & &  & {K}_{1,1} & {K}_{1,2} \\
			&  &  &  &  &{K}_{1,1}
		\end{psmallmatrix},
	\end{equation}
	where  $\mathbf{K} = (K_{1,1}, K_{1,2}, \dots,K_{1,2m})$  is a $2m$-tuple of matrices such that ${K}_{1,1} \in \mathrm{GL}(k,\C)$ and ${K}_{1,j} \in \mathrm{M}(k,\C)$ for all $2 \leq j \leq 2m$.  
	
Let $ h = [Y_{i,j}]=\tau g \tau^{-1 }  \in \mathrm{GL}(2mk,\C)$, where  $Y_{i,j} \in \mathrm{M}(k,\C)$ for all $1 \leq i, j \leq 2m$.  Since $g \in \mathrm{GL}(2mk,\C)$ such that $gAg^{-1} = A^{-1}$ and $g^2 =  \mathrm{I}_{2mk}$, we have 
$$ h A_{W}  h^{-1} =   A_{W} ^{-1} \hbox{ and } h^2 = \mathrm{I}_{2mk}.$$   
Note that the set of reversers of $A_{W}$ is a right coset of the centraliser of $A_{W}$. Therefore,  using Equations \eqref{eq-rev-uniptent-Weyr} and \eqref{eq-centralizer-uniptent-Weyr},  we have 
	\begin{equation*}\label{eq-reversing-symmetry-unipotent-Weyr-block}
		h =  \mathrm{Toep}_{2m}(\mathbf{K}) \, \Omega(\mathrm{I}_{k}, 2m)  \hbox{ and } h^2 = \mathrm{I}_{2mk}.
	\end{equation*}
	where $\mathrm{det}(K_{1,1}) \neq 0$.
	It follows that $h$ is a block upper-triangular matrix with diagonal blocks $Y_{i,i} = (-1)^{(2m-i)} K_{1,1}$ for all $1 \leq  i \leq 2m$ such that $(K_{1,1})^2= \mathrm{I}_k$. This implies
	$$ \mathrm{det}(h)= \prod_{i=1}^{2m} \mathrm{det}\Big((-1)^{(2m-i)}K_{1,1} \Big)  = ((-1)^{k})^{m } \, \Big(\mathrm{det}\Big((K_{1,1})^2 \Big)\Big)^m = (-1)^{km}.$$
	Since $ h =\tau g \tau^{-1 } $, we have $\mathrm{det}(g)= (-1)^{mk}$. Therefore, if $m $ and $k$ are odd, then $\mathrm{det}(g)= -1$. Hence, $A$ is not strongly reversible in   $\mathrm{SL}(2mk,\C)$ if $m $ and $k$ are odd. This completes the proof.
\end{proof}

Next, we consider an arbitrary unipotent Jordan form in  $\mathrm{SL}(n,\C)$.
Before that, note the following example, which gives an idea of the proof of the general result (cf. \propref{prop-unipotent-strong-rev-SL(n, C)}).
\begin{example}\label{example-non-st-rev-non-homo-unipotent}
	Let $A$ be a unipotent element in $\mathrm{SL}(10,\C)$  with Jordan structure $(4,4,2)$ as given in Example \ref{example-Weyr-form-explanation}. Then $A$ is not be strongly reversible in $\mathrm{SL}(10,\C)$. To see this, recall that $A$ has the Weyr structure $(3,3,2,2)$. Therefore, the Weyr form $A_{W}$  of $A$  and its inverse $A^{-1}_{W} $ can be given as 
	$$	{\small	
		\left( {\begin{array}{c|c|c|c}
				\mathrm{I}_3 & \mathrm{I}_3 &  &  \\
				\hline
				& \mathrm{I}_3 & \mathrm{I}_{3,2}&  \\
				
				\hline
				&  & \mathrm{I}_2& \mathrm{I}_2   \\
				
				\hline
				&  & & \mathrm{I}_2 
		\end{array} } \right), \hbox{ and } 
		A^{-1}_{W} = \left( {\begin{array}{c|c|c|c}
				\mathrm{I}_3 & -\mathrm{I}_3 & \mathrm{I}_{3,2} & - \mathrm{I}_{3,2} \\
				\hline
				& \mathrm{I}_3 & - \mathrm{I}_{3,2}&  \mathrm{I}_{3,2} \\
				\hline
				& & \mathrm{I}_2&- \mathrm{I}_2   \\
				
				\hline
				&  & & \mathrm{I}_2 
		\end{array} } \right).
	}$$
	In view of Equation \eqref{eq-centralizer-example-Weyr-form}, any element $f \in \mathrm{GL}(10,\C)$ that satisfies $fA_{W} = A_{W} f$ has the following block upper triangular  form
	$$ f= {\small \left( {\begin{array}{c|c|c|c}
				\begin{psmallmatrix}
					P & \ast \\
					& Q
				\end{psmallmatrix} & \ast & \ast & \ast \\
				\hline
				& \begin{psmallmatrix}
					P & \ast \\
					& Q
				\end{psmallmatrix} &\ast&  \ast \\
				\hline
				& & 	P & \ast  \\
				\hline
				&  &  & 	P
		\end{array} } \right),}
	\hbox{ where } 	P  = \begin{psmallmatrix}
		a & b\\
		c & d
	\end{psmallmatrix} \in \mathrm{GL}(2,\C), \, Q = \begin{pmatrix}
		g
	\end{pmatrix}\in \mathrm{GL}(1,\C).$$
	Let
	$
	h= 	{\small \left( {\begin{array}{c|c|c|c}
				-	\mathrm{I}_3 & - 2\mathrm{I}_3 & -\mathrm{I}_{3,2} &  \\
				\hline
				& \mathrm{I}_3 & \mathrm{I}_{3,2}& \\
				
				\hline
				&  & -\mathrm{I}_2&  \\
				
				\hline
				&  & & \mathrm{I}_2 
		\end{array} } \right)}$. Then
	$ h A_{W}  = A^{-1}_{W} h= {\small	\left( {\begin{array}{c|c|c|c}
				-	\mathrm{I}_3 & - 3\mathrm{I}_3 & -3\mathrm{I}_{3,2} & - \mathrm{I}_{3,2} \\
				\hline
				& \mathrm{I}_3 &2 \mathrm{I}_{3,2}&  \mathrm{I}_{3,2} \\
				
				\hline
				& & -\mathrm{I}_2&  -\mathrm{I}_{2}   \\
				
				\hline
				&&  & \mathrm{I}_2 
		\end{array} } \right)}.$
	Therefore, $h A_{W} h^{-1} =A^{-1}_{W}$. 	Since the  set of  reversers of $A_{W}$ is a right coset of the centraliser of $A_{W}$, every  reverser $\tau$  of $A_{W}$  has the following form
	$$\tau = fh=  {\small\left( {\begin{array}{c|c|c|c}
				\begin{psmallmatrix}
					-	P & \ast \\
					&- Q
				\end{psmallmatrix} & \ast & \ast & \ast \\
				\hline
				& \begin{psmallmatrix}
					P& \ast \\
					& Q
				\end{psmallmatrix} &\ast&  \ast \\
				\hline
				& & -P & \ast  \\
				\hline
				&  &  & P
		\end{array} } \right)}
	.$$
	This implies that  $\mathrm{det}(\tau) =	((-1)^2)^2 \mathrm{det}(P^4) (-1)^{1} \mathrm{det}(Q^2)=(-1)  \mathrm{det}(P^4)\mathrm{det}(Q^2).$
	Moreover, if $\tau$ is an involution, then both $ P$ and $Q$ are also involutions, and thus $\mathrm{det}(P^4) = \mathrm{det}(Q^2)=1$. Therefore, if  $\tau  \in \mathrm{GL}(10,\C)$ such that $\tau A_{W} \tau ^{-1} = A_{W} ^{-1}$ and $\tau^2 = \mathrm{I}_{10}$, then  $\mathrm{det}(\tau) =-1$. Hence, $A$ is not strongly reversible in $\mathrm{SL}(10,\C)$.
	\qed
\end{example}

Now, we will generalise Example \ref{example-non-st-rev-non-homo-unipotent}.  The following result follows from the proof of \cite[Theorem 4.6]{GM}. However, we will provide an alternative proof using the notion of the Weyr form. We refer to Section \ref{subsec-notation-partition-d(n)} for  notation of the partition used in the following result.
\begin{proposition}\label{prop-det-reverser-inv-unipotent-SL(n, C)}
Let $A \in \mathrm{SL}(n,\C)$ be a unipotent element such that the Jordan decomposition  of $A$ is represented by the  partition $ {\d}(n) = [ d_1^{t_{d_1}}, \dots,   d_s^{t_{d_s}} ] $, where $d_{k}$ is even for all $1 \leq k \leq s$. If $g$ is  an involution in $\mathrm{GL}(n,\C)$  such that  $gAg^{-1} =A^{-1}$, then $\mathrm{det}(g) = (-1)^{\lvert \E_{{\d}(n)}^2 \rvert}.$
\end{proposition}
\begin{proof}
	Let $A_{W}$ denote the Weyr form of $A$. 
	Using \lemref{lem-relation-both-partition}, the partition $\overline{{\d}}(n) $ representing the Weyr form $A_{W}$ is given by
	$$ \overline{\d}(n) = \Big[ (t_{d_1}+t_{d_2}+\cdots+t_{d_s})^{d_s},  (t_{d_1}+t_{d_2}+\cdots+t_{d_{s-1}})^{d_{s-1}-d_s}, \dots,  (t_{d_1}+t_{d_2})^{d_2 -d_3},(t_{d_1})^{d_1 -d_2}\Big].$$
	Therefore,  $A_{W}$ is a block matrix with $(d_1)^2$ many blocks. Moreover,  for all $1\leq i \leq d_1$, the size $n_i$ of the $i$-th diagonal block of  $A_{W}$ is given by
	$$ n_i =  {\small \begin{cases}
			t_{d_1}+t_{d_2}+\cdots+t_{d_s}  & \text{if $1 \leq i \leq d_s$ }\\
			t_{d_1}+t_{d_2}+\cdots+t_{d_{s-r-1} }& \text{if ${d_{s-r}} +1  \leq i \leq {d_{s-r-1}}$},  \text{where $0 \leq r \leq s-2$}
		\end{cases}.}$$ 
	This implies that the $(i,j)$-th block of $A_{W}$  has size $ n_i \times  n_j$, where $1\leq i,j \leq d_1$; see Section \ref{subsec-notation-block-matrix}. Furthermore, the $(i,j)$-th blocks of the Weyr form $A_{W}$ and its inverse $A_{W} ^{-1}$  are given by
	$$ {\small (A_{W})_{i,j} =  \begin{cases}
			\mathrm{I}_{n_i} & \text{if $i=j$ }\\
			\mathrm{I}_{n_i, n_{i +1}} & \text{if $i+1=j$}\\
			\mathrm{O}_{n_i \times n_{j}}  & \text{otherwise}
		\end{cases},} \text{ and }   {\small (A_{W} ^{-1})_{i,j} =  \begin{cases}
			\mathrm{I}_{n_i} & \text{if $i=j$ }\\
			(-1)^{(j-i)} \,  \mathrm{I}_{n_i, n_{j}} & \text{if $i<j$}\\
			\mathrm{O}_{n_i \times n_{j}}  & \text{otherwise}
		\end{cases},}$$
	where $ 1 \leq i,j \leq d_1 $. 
	Consider $\Omega_{W} := [ X_{i,j} ]_{1 \leq i,j \leq d_1}  \in   \mathrm{GL}(n,\C)$  such that
	{\small	\begin{equation}
			X_{i,j} =\begin{cases}
				\mathrm{O}_{n_i \times n_{j}}  & \text{if $j<i $}\\
				\mathrm{O}_{n_i \times n_{j}}  & \text{if $j =d_1,  i  \neq d_1$}\\
				(-1)^{d_1-i} \, \mathrm{I}_{n_i}& \text{if $j=i$ }\\
				(-1)^{d_1-i} \,  \binom{d_1-i-1}{j-i}  \,  \mathrm{I}_{n_i, n_{j}}  & \text{if $ i<j, j  \neq d_1$ }
			\end{cases},
	\end{equation}}
	where  $\binom{d_1-i-1}{j-i} $ denotes the binomial coefficients. Then by using a similar argument as in \lemref{lem-conjugacy-reverser-unipotent}, we have 
	$$\Omega_{W} A_{W} \Omega_{W}^{-1}=A_{W}^{-1}.$$
Let $f=[P_{i,j}]_{1 \leq i,j \leq d_1} \in  \mathrm{GL}(n,\C)$ be an $n \times n$ matrix  commuting with  Weyr form $A_{W}$ such both $f$ and $A$ has the same  block structure given by the partition $\overline{{\d}}(n) $. Then using \propref{prop-centralizer-basic-Weyr-block}, we can conclude that $f$ is an upper triangular block matrix, and the $i$-th diagonal block $P_{i,i}$ of $f$ has the following form
	\begin{enumerate}
		\item if  $1 \leq i \leq d_s$, then $ P_{i,i}=  \begin{psmallmatrix}
			{P}_{1} &\ast  & \ast &  \ast  & \cdots  &  \ast \\
			& {P}_{2} &  \ast   & \cdots  & \cdots  &   \ast \\
			&  &\ddots   & \ddots  && \vdots \\
			& & & \ddots & \ddots   & \vdots \\
			& & &  & {P}_{{s-1}} &  \ast \\
			&  &  &  &  &{P}_{s}
		\end{psmallmatrix}$,
		
		\item if  ${d_{s-r}} +1  \leq i \leq {d_{s-r-1}}$, then $$ P_{i,i}=  \begin{psmallmatrix}
			{P}_{1} &\ast  & \ast &  \ast  & \cdots  &  \ast \\
			& {P}_{2} &  \ast   & \cdots  & \cdots  &   \ast \\
			&  &\ddots   & \ddots  && \vdots \\
			& & & \ddots & \ddots   & \vdots \\
			& & &  & {P}_{{s-r-2}} &  \ast \\
			&  &  &  &  &{P}_{s-r-1}
		\end{psmallmatrix},$$
		
		\item  if  $d_2 +1  \leq i \leq d_1$, then $P_{i,i} = P_1$,
	\end{enumerate}
	where  $0 \leq r \leq s-3$ and $P_k \in \mathrm{GL}(t_{d_k}, \C)$ for all $ 1 \leq k \leq s$.
	It is worth noting that there are $d_k$ many times matrices $P_k$ are repeated in the diagonal blocks of matrix $f$, where $ 1 \leq k \leq s$. 
	
Since $A_{W}$ is the Weyr form of $A$, there exists a $\tau \in \mathrm{GL}(n,\C)$ such that $A_{W} = \tau A \tau^{-1}$.	Consider $h = \tau g \tau^{-1} = [Y_{i,j}]_{1 \leq i,j \leq d_1} \in \mathrm{GL}(n,\C)$. Since $g$ is  an involution in $\mathrm{GL}(n,\C)$  such that  $gAg^{-1} =A^{-1}$, it follows that $h$ is an involution in $\mathrm{GL}(n,\C)$ such that 
	\begin{equation}\label{eq-relation-reverser-Weyr-Jordan-unipotent}
		h A_{W} h^{-1} =A_{W}^{-1}, \hbox{ and } \mathrm{det}(h) =\mathrm{det}(g).
	\end{equation}
	Note that the set of reversers of $A_{W}$ is a right coset of the centraliser of $A_{W}$. Therefore,
	$$h =  f  \,\Omega_{W}.$$
	This implies  $h$ is an upper triangular block matrix such that diagonal blocks of $h$ are given by  $$Y_{i, i} = (-1)^{d_1-i}P_{i, i}, \hbox{ where }  1 \leq i \leq d_1.$$
	Therefore, $\mathrm{det}(h) =	\prod_{i=1}^{d_1} \mathrm{det}( (-1)^{d_1-i}P_{i,i})$. 	 Since  $d_k$ is even for all $1 \leq k \leq s$, we have $$ \mathrm{det}(h) = ((-1)^{t_{d_1}})^{\frac{d_1}{2}}  \mathrm{det}((P_{1})^{2})^{\frac{d_1}{2}}   ((-1)^{t_{d_2}})^{\frac{d_2}{2}} \mathrm{det}((P_{2})^{2})^{\frac{d_2}{2}}  \cdots ((-1)^{t_{d_s}})^{\frac{d_s}{2}}\mathrm{det}((P_{s})^{2})^{\frac{d_s}{2}}.$$
	Since $h$ is an involution,   $P_{k}$ is an involution for all $1 \leq k\leq s$. This implies 
	$$\mathrm{det}(h) =  ((-1)^{t_{d_1}})^{\frac{d_1}{2}}((-1)^{t_{d_2}})^{\frac{d_2}{2}} \cdots ((-1)^{t_{d_s}})^{\frac{d_s}{2}}.$$
	Observe that if $ d_k = \,0 \pmod 4$ for some $1 \leq k\leq s$, then  $((-1)^{t_{d_k}})^{\frac{d_k}{2}} = 1$. Thus,  we have
	$$\mathrm{det}(h) = \prod_{d_k = 2\pmod4} ((-1)^{t_{d_k}})^{\frac{d_k}{2}} =  \prod_{d_k =2 \pmod 4} (-1)^{t_{d_k}} =(-1)^{\sum_{d_k \in \E_{{\d}(n)}^2}  t_{d_k}} = (-1)^{\lvert \E_{{\d}(n)}^2 \rvert}, $$
	where $\E_{{\d}(n)}^2 = \{ d_k \mid d_k \equiv 2  \pmod 4 \}$. 
	The proof now follows from Equation \eqref{eq-relation-reverser-Weyr-Jordan-unipotent}.
\end{proof}

Finally, we classify the strongly reversible unipotent elements in $\mathrm{SL}(n,\C)$. This result is also proved in \cite{GM} using an infinitesimal version of the notion of the classical reversibility or reality, known as \textit{adjoint reality}.

\begin{proposition}[{\cite[Theorem 4.6]{GM}}] \label{prop-unipotent-strong-rev-SL(n, C)}
Let $A \in \mathrm{SL}(n,\C) $ be a unipotent element. Then $A$ is strongly reversible if and only if at least one of the following conditions holds.
	\begin{enumerate}
		\item \label{cond-lem-uni-str-rev-1}  There is a Jordan block $\mathrm{J}(1, 2r+1)$ of odd size in the Jordan decomposition of $A$.

		\item \label{cond-lem-uni-str-rev-2}  The total number of Jordan blocks of the form $\mathrm{J}(1, 4k+2)$  in the Jordan decomposition of $A$ is even.
	\end{enumerate}
\end{proposition}

\begin{proof} Up to conjugacy, we can assume that $A$ is in  Jordan form. Let $ {\d}(n) = [ d_1^{t_{d_1}}, \dots,   d_s^{t_{d_s}} ]$ be the  partition representing the Jordan form of $A$.  Note that if $A$ has a Jordan block of odd size, then using  \remref{rem-construction-rev-inv} and \propref{prop-str-rev-subform-SL}, we can construct a suitable involution $g $ in $ \mathrm{SL}(n,\C) $ such that $ gAg^{-1} =A^{-1}$. 	
 Let  $A$ be a strongly reversible element in $ \mathrm{SL}(n,\C) $. Then there exists an involution $g \in \mathrm{SL}(n,\C) $ such that $ gAg^{-1} =A^{-1}$.  Suppose that there does not exist any Jordan block of odd size in the Jordan decomposition of $A$, otherwise we are done.  Then  $d_{k}$ is even for all $1 \leq k \leq s$.  
Using \propref{prop-det-reverser-inv-unipotent-SL(n, C)}, we have
$$\mathrm{det}(g) = (-1)^{\lvert \E_{{\d}(n)}^2 \rvert} =1.$$
This implies that  $\lvert \E_{{\d}(n)}^2 \rvert  $ is even, where $\E_{{\d}(n)}^2 = \{ d_k \mid d_k \equiv 2  \pmod 4 \}$ is as defined in \defref{def-special-partition-1}. Thus, the forward direction of the result is proven.
	
Conversely, let at least one  of the conditions $(\ref{cond-lem-uni-str-rev-1})$ or $(\ref{cond-lem-uni-str-rev-2})$ of \propref{prop-unipotent-strong-rev-SL(n, C)} holds. Then  using  \lemref{lem-Jordan-M(n, C)}, \remref{rem-construction-rev-inv}, and \propref{prop-str-rev-subform-SL}, we can construct a suitable involution $g $ in $ \mathrm{SL}(n,\C) $ such that $ gAg^{-1} =A^{-1}$. Therefore, $A$ is strongly reversible in $ \mathrm{SL}(n,\C) $. This completes the proof.
\end{proof} 

The study of strong reversibility of an element in $\mathrm{SL}(n, \C)$ with $-1$ as its only eigenvalue is analogous to the unipotent case. Similar to \propref{prop-det-reverser-inv-unipotent-SL(n, C)}, we have the following result, which will be used in proving \thmref{thr-main-strong-rev-SL(n, C)}.

\begin{proposition} \label{prop-det-reverser-inv-SL(n, C)-eigenvalue-minus-1}
Let $A \in \mathrm{SL}(n,\C)$ be an element with $-1$ as its only eigenvalue such that the Jordan decomposition  of $A$ is represented by the  partition $ {\d}(n) = [ d_1^{t_{d_1}}, \dots,   d_s^{t_{d_s}} ] $, where $d_{k}$ is even for all $1 \leq k \leq s$. If $g$ is  an involution in $\mathrm{GL}(n,\C)$  such that  $gAg^{-1} =A^{-1}$, then $\mathrm{det}(g) =
	(-1)^{\lvert \E_{{\d}(n)}^2 \rvert}.$
\end{proposition}
\begin{proof}
The proof follows using a similar line of arguments as in \propref{prop-det-reverser-inv-unipotent-SL(n, C)}.
\end{proof}

The next result classify strongly reversible elements in $\mathrm{SL}(n, \C)$ with $-1$ as their only eigenvalue.

\begin{proposition} \label{prop-strong-rev-SL(n, C)-eigenvalue-minus-1}
Let $A$ be an element of $\mathrm{SL}(n,\C)$ with $-1$ as its only eigenvalue. Then $A$ is strongly reversible if and only if at least one of the following conditions holds.
\begin{enumerate}
\item  There is a Jordan block $\mathrm{J}(-1, 2r+1)$ of odd size in the Jordan decomposition of $A$.
\item   The total number of Jordan blocks of the form $\mathrm{J}(-1, 4k+2)$  in the Jordan decomposition of $A$ is even.
	\end{enumerate}
\end{proposition}

\begin{proof}
	The proof follows using a similar line of arguments as in \propref{prop-unipotent-strong-rev-SL(n, C)}.
\end{proof}

\section{Strong reversibility  in ${\SL}(n,\C)$ } \label{sec-str-rev-main-result}
In this section, we will investigate the strong reversibility of reversible elements of $\mathrm{SL}(n,\C)$ having eigenvalues $\lambda$ and $ \lambda^{-1}$, where $\lambda \neq \pm1$, and prove our main result \thmref{thr-main-strong-rev-SL(n, C)}. 
\begin{proposition}\label{prop-det-rev-inv-non-unipotent-Weyr}
	Let $\lambda \neq \pm1$, and $A= A_1 \oplus A_2$ be an element of $\mathrm{SL}(2n,\C)$ such that $A_1 \in \mathrm{GL}(n,\C)$ and $A_2 \in \mathrm{GL}(n,\C)$ have eigenvalues $\lambda$ and $\lambda^{-1}$, respectively. If  $g \in \mathrm{GL}(n,\C)$ be  an involution   such that  $gAg^{-1} =A^{-1}$,  then  $\mathrm{det}(g) = (-1)^n$
\end{proposition}
\begin{proof} 
	Up to conjugacy, we can assume that $ A_1$ and $ A_2$  are in Jordan form. Since $A= A_1 \oplus A_2$ is reversible,  \thmref{thm-reversible-SL(n, C)} implies that  $ A_1$ and $ A_2$  have the same Jordan structure; see \defref{def-jordan-structure-partition}. Let $A_{W}$ denote the Weyr  form of $A$ such that $A= (A_1)_{W} \oplus (A_2)_{W}$. Then $A_{W} = \tau A \tau^{-1}$ for some $\tau \in \mathrm{SL}(2n,\C)$. Moreover, the Weyr forms $ (A_1)_{W}$ and $ (A_2)_{W}$  have the same Weyr structure, say $(n_1,n_2,\dots,n_r)$; see \defref{def-basic-Weyr-block}. Then the $(i,j)$-th blocks of the Weyr forms  $ (A_1)_{W}$ and $ (A_2)_{W}$ are given by
	$$ ((A_1)_{W})_{i,j} = {\small \begin{cases}
			\lambda	\mathrm{I}_{n_i} & \text{if $j=i$ }\\
			\mathrm{I}_{n_i, n_{i +1}} & \text{if $j =i+1$}\\
			\mathrm{O}_{n_i \times n_{j}}  & \text{otherwise}
	\end{cases}}, \text{ and }  ((A_2)_{W})_{i,j} = {\small  \begin{cases}
			\lambda^{-1}	\mathrm{I}_{n_i} & \text{if $j=i$ }\\
			\mathrm{I}_{n_i, n_{i +1}} & \text{if $j =i+1$}\\
			\mathrm{O}_{n_i \times n_{j}}  & \text{otherwise}
	\end{cases}},$$
	where $1\leq i,  j \leq r$.
	Moreover, the $(i,j)$-th block of the upper triangular block matrix $(A_1)_{W}^{-1}	$ can be written as follows
	$$ 	((A_1)_{W})^{-1}_{i,j}=  {\small\begin{cases}
			\lambda^{-1}\mathrm{I}_{n_i}  & \text{if $j=i$ }\\
			(-1)^k \lambda^{-(k+1)} \mathrm{I}_{n_i, n_{i +k}} & \text{if $j =i+k$}, 1 \leq k \leq r- i\\
			\mathrm{O}_{n_i \times n_{j}}  & \text{otherwise}
		\end{cases}, \text{where $1\leq i,  j \leq r$.}}$$
	Similarly, we can write
	$$ ((A_2)_{W})^{-1}_{i,j} = {\small  \begin{cases}
			\lambda\mathrm{I}_{n_i}  & \text{$j=i$ }\\
			(-1)^k \lambda^{k+1} \mathrm{I}_{n_i, n_{i +k}} & \text{$j =i+k$}, 1 \leq k \leq r- i\\
			\mathrm{O}_{n_i \times n_{j}}  & \text{otherwise}
		\end{cases}, \text{where $1\leq i,  j \leq r$.}}$$
	Consider $\Omega_{W} =  \begin{psmallmatrix}
		& \Omega_{1} \\
		\Omega_{2}	& 
	\end{psmallmatrix} $ in   $\mathrm{GL}(2n,\C)$ such that  $\Omega_1= [ X_{i,j} ]_{1 \leq i,j \leq r} \in \mathrm{GL}(n,\C)$ and $\Omega_2= [ Y_{i,j} ]_{1 \leq i,j \leq r} \in \mathrm{GL}(n,\C)$   are defined as follows
	{\small 	\begin{align*}
			X_{i,j} &=\begin{cases}
				\mathrm{O}_{n_i \times n_{j}} & \text{if $ j<i $}\\
				\mathrm{O}_{n_i \times n_{j}}  & \text{if $j =r,  i  \neq r$}\\
				(-1)^{r-i} \, \lambda^{-2(r-i)} \, \mathrm{I}_{n_i} & \text{if $j=i$ }\\
				(-1)^{r-i} \,  \binom{r-i-1}{j-i}  \,  \lambda^{-2r+i+j}  \,  \mathrm{I}_{n_i, n_{j}}  & \text{if $ i<j, j  \neq r$ }
			\end{cases}, \hbox{ and}
			\\	Y_{i,j} &=\begin{cases}
				\mathrm{O}_{n_i \times n_{j}} & \text{if $ j<i $}\\
				\mathrm{O}_{n_i \times n_{j}}  & \text{if $j =r,  i  \neq r$}\\
				(-1)^{r-i} \, \lambda^{2(r-i)}\, \mathrm{I}_{n_i} & \text{if $j=i$ }\\
				(-1)^{r-i} \,  \binom{r-i-1}{j-i}  \, \lambda^{2r-i-j}  \,  \mathrm{I}_{n_i, n_{j}}  & \text{if $ i<j , j  \neq r$ }
			\end{cases},
	\end{align*}}
	where  $\binom{r-i-1}{j-i} $ denotes the binomial coefficients. Then  by using a similar argument as in \lemref{lem-conjugacy-reverser-unipotent}, we have 
	$$\Omega_{W} A_{W} \Omega_{W}^{-1}=A_{W}^{-1}.$$
	Let $ f \in \mathrm{GL}(2n,\C)$ satisfies $fA=Af$. Since $A = A_1 \oplus A_2$ such that $A_1$ and $A_2$ have no common eigenvalues, \cite[Proposition 3.1.1]{COV} implies that 
	$f =  \begin{psmallmatrix}
		B_1&  \\
		&  B_2
	\end{psmallmatrix},$
where $B_1,B_2  \in \mathrm{GL}(n,\C)$  such that $B_1 A_1=A_1 B_1$ and $B_2A_2=A_2 B_2$. Moreover, since $A_1$ and $A_2$ are basic Weyr matrices with the same Weyr structure,  \propref{prop-centralizer-basic-Weyr-block} implies that $B_1$ and $B_2$ are block upper triangular matrices with the same block structure $(n_1,n_2,\dots,n_r)$. 
	
Consider $h = \tau g \tau^{-1}$ in $\mathrm{GL}(2n,\C)$, where  $\tau \in \mathrm{SL}(2n,\C)$ such that $A_{W} = \tau A \tau^{-1}$.  Since $g$ is  an involution in $\mathrm{GL}(2n,\C)$  such that  $gAg^{-1} =A^{-1}$, it follows that $h$ is an involution in $\mathrm{GL}(2n,\C)$ such that 
	\begin{equation}\label{eq-relation-reverser-Weyr-Jordan-non-unipotent}
		h A_{W} h^{-1} =A_{W}^{-1}, \hbox{ and } \mathrm{det}(h) =\mathrm{det}(g).
	\end{equation}
	Since the set of reversers of $A_{W}$  is a right coset of the centraliser of $A_{W}$, we have $h =  f  \,\Omega_{W}.$ Therefore, 
	$h =   \begin{psmallmatrix}
		B_1&  \\
		&  B_2
	\end{psmallmatrix} \begin{psmallmatrix}
		& \Omega_{1} \\
		\Omega_{2}	& 
	\end{psmallmatrix} = \begin{psmallmatrix}
		& B_1 \Omega_{1} \\
		B_2\Omega_{2}	& 
	\end{psmallmatrix}.$
	This implies that $$\mathrm{det}(h) = \mathrm{det} \begin{psmallmatrix}
		B_1 \Omega_{1} &\\
		&	B_2\Omega_{2}	 
	\end{psmallmatrix} \mathrm{det} \begin{psmallmatrix}
		& \mathrm{I}_n \\
		\mathrm{I}_n	& 
	\end{psmallmatrix}= (-1)^n  \, \mathrm{det} \begin{psmallmatrix}
		B_1 \Omega_{1} &\\
		&	B_2 \Omega_{2}	 
	\end{psmallmatrix}.$$
	Since $h$ is an involution, we get  $  \mathrm{det} \begin{psmallmatrix}
		B_1 \Omega_{1} &\\
		&	B_2 \Omega_{2}	 
	\end{psmallmatrix} = \mathrm{det}(B_1 \Omega_{1})\, \mathrm{det}(B_2 \Omega_{2}) = 1.$ Therefore, $\mathrm{det}(h) = (-1)^n $. The proof now follows from Equation \eqref{eq-relation-reverser-Weyr-Jordan-non-unipotent}.
\end{proof}

The following result generalises \propref{prop-str-rev-subform-SL}\eqref{part-2-prop-str-rev-subform-SL}.
\begin{proposition}\label{prop-str-rev-lambda-non-unipotent}
Let $\lambda \neq \pm1$, and $A= A_1 \oplus A_2$ be a reversible element of $\mathrm{SL}(2n,\C)$ such that all eigenvalues of $A_1 \in \mathrm{GL}(n,\C)$ and $A_2 \in \mathrm{GL}(n,\C)$ are $\lambda$ and $\lambda^{-1}$, respectively. Then $A$ is strongly reversible if and only if $n$ is even.
\end{proposition}
\begin{proof}
If	$A$ is strongly reversible in $\mathrm{SL}(2n,\C)$,  then there exists an involution $g$  in $\mathrm{SL}(n,\C)$  such that  $gAg^{-1} =A^{-1}$. Using \propref{prop-det-rev-inv-non-unipotent-Weyr}, we have $\det{g} = (-1)^n =1$. This implies that $n$ is even.
	
Conversely, let $n$ be even. Since $A$ is reversible, using \thmref{thm-reversible-SL(n, C)}, we can  partition the blocks in the Jordan form of $A$  into pairs $ \{\mathrm{J}(\lambda, r),\mathrm{J}(\lambda^{-1}, r)\} $. Since $n$ is even, there are an even number of pairs $ \{\mathrm{J}(\lambda, r),\mathrm{J}(\lambda^{-1}, r)\} $ with $r$ is odd.  Therefore,  using \remref{rem-construction-rev-inv} and \propref{prop-str-rev-subform-SL}, we can construct a suitable involution $g $ in $ \mathrm{SL}(n,\C) $ such that $ gAg^{-1} =A^{-1}$. Therefore, $A$ is strongly reversible in $ \mathrm{SL}(n,\C) $. This proves the result.
\end{proof}

\subsection{Proof of \thmref{thr-main-strong-rev-SL(n, C)}}
Up to conjugacy, we can assume that $A$ is in Jordan form. Since $A \in \mathrm{SL}(n,\C)$ is reversible, \thmref{thm-reversible-SL(n, C)} implies that the Jordan blocks in the Jordan form $A$ can be partitioned into singletons $\{\mathrm{J}(\mu, k)\}$ or pairs $ \{\mathrm{J}(\lambda, m),\mathrm{J}(\lambda^{-1}, m)\} $, where $  \mu \in \{ -1,+1\}$, $\lambda \notin \{ -1, +1\}$. Let $ \lambda_1, \lambda_2,\dots,\lambda_\ell$ and their inverses $ \lambda_1^{-1}, \lambda_2^{-1},\dots, \lambda_\ell^{-1}$ be distinct eigenvalues of $A$ such that  $\lambda_i \notin \{ -1,+1\} $ and $\lambda_j \neq \lambda_{j'}$ or $\lambda_{j'}^{-1}$ for all $j \neq j'$, where $1 \leq i, j,j' \leq \ell$. Furthermore, suppose that both $\lambda_i$ and $\lambda_i^{-1}$ have multiplicity $m_i$ for all $1 \leq  i \leq \ell$. Therefore, up to conjugacy, we can assume that
\begin{equation} \label{eq-block-Jordan-matrices-str-rev-general}
	A = P \oplus Q \oplus \begin{psmallmatrix}
		R_{1} &  \\
		&  R'_{1}
	\end{psmallmatrix} \oplus \begin{psmallmatrix}
		R_{2} &  \\
		&  R'_{2}
	\end{psmallmatrix}  \oplus \cdots  \oplus \begin{psmallmatrix}
		R_{\ell} &  \\
		&  R'_{\ell}
	\end{psmallmatrix},
\end{equation}
such that $P \in \mathrm{GL}(p,\C)$, $Q \in \mathrm{GL}(q,\C)$, $R_i\in \mathrm{GL}(m_i,\C)$ and $R'_i  \in \mathrm{GL}(m_i,\C)$ are Jordan matrices corresponding to eigenvalues $+1$, $-1$,  $\lambda_i$ and $\lambda_i^{-1}$, respectively, where $1 \leq  i \leq \ell$.  Then the Weyr form $A_{W}$ corresponding to the Jordan form $A$ can be given by
\begin{equation} \label{eq-block-Weyr-matrices-str-rev-general}
	A_{W} = P_{W}  \,  \oplus Q_{W} \,  \oplus \begin{psmallmatrix}
		(R_{1})_{W}  &  \\
		&  (R'_{1})_{W} 
	\end{psmallmatrix} \oplus \begin{psmallmatrix}
		(R_{2})_{W}  &  \\
		&  (R'_{2})_{W} 
	\end{psmallmatrix}  \oplus \cdots  \oplus \begin{psmallmatrix}
		(R_{\ell})_{W}  &  \\
		&  (R'_{\ell})_{W} 
	\end{psmallmatrix},
\end{equation}
such that $P_{W}, Q_{W},(R_{i})_{W} $ and $(R'_{i})_{W}$ are basic Weyr matrices corresponding to Jordan matrices  $P,Q,R_{i}$ and $R'_{i}$, respectively, where $1 \leq  i \leq \ell$.  Since diagonal block matrices in Weyr form $A_{W}$ do not have common eigenvalues,  \cite[Proposition 3.1.1]{COV} implies  that any $f \in \mathrm{GL}(n,\C)$ commuting with $A_{W}$  has the following form
\begin{equation} \label{eq-commutator-block-Weyr-matrices-str-rev-general}
	f =B \oplus C \oplus  \begin{psmallmatrix}
		D_{1} &  \\
		&  D'_{1}
	\end{psmallmatrix} \oplus \begin{psmallmatrix}
		D_{2} &  \\
		&  D'_{2}
	\end{psmallmatrix}  \oplus \cdots  \oplus \begin{psmallmatrix}
		D_{\ell} &  \\
		&  D'_{\ell}
	\end{psmallmatrix},
\end{equation}
such that $B  \in \mathrm{GL}(p,\C), C  \in \mathrm{GL}(q,\C), D_i \in \mathrm{GL}(m_i,\C)$ and $D'_i \in \mathrm{M}(m_i,\C)$, and satisfies  $ B P_{W} = P_{W}B, \, C Q_{W}= Q_{W}  C, \,	D_{	i} 	(R_{i})_{W}  =	(R_{i})_{W}	D_{i} $ and $	D'_{i} (R'_{i})_{W} =(R'_{i})_{W}	D'_{i}$, respectively,  where $1 \leq i \leq \ell$.

Suppose that $A$ is strongly reversible in $ \mathrm{SL}(n,\C)$. Since $A_{W}$ is the Weyr form of $A$, there exists $\tau \in \mathrm{SL}(n,\C)$ such that $A_{W} = \tau A\tau^{-1}$. Therefore, $ A_{W}$ is strongly reversible in $ \mathrm{SL}(n,\C)$. This implies that there exists an involution $g \in \mathrm{SL}(n,\C)$ such that $g A_{W} g^{-1} =A_{W}^{-1}$. 
Note that  using a similar argument as  in \propref{prop-unipotent-strong-rev-SL(n, C)}, \propref{prop-strong-rev-SL(n, C)-eigenvalue-minus-1}  and \propref{prop-str-rev-lambda-non-unipotent}, we can find a suitable  reverser of $A_{W}$ given in Equation \eqref{eq-block-Weyr-matrices-str-rev-general}.  Since the set of  reversers is a right coset of the centraliser, it follows that $g$ has the following form
\begin{equation}\label{eq-reverser-block-Weyr-matrices-str-rev-general}
	g=  \alpha \oplus \beta \oplus g_1 \oplus \cdots  \oplus g_\ell,
\end{equation}
such that $\alpha  \in \mathrm{GL}(p,\C), \beta  \in \mathrm{GL}(q,\C),$ and $ g_i \in \mathrm{GL}(2m_i,\C)$, and satisfies  $ \alpha P_{W} \alpha^{-1}= P_{W}^{-1}, \, \beta Q_{W} \beta^{-1}=Q_{W}^{-1},$  and $ g_i \begin{psmallmatrix}
	(R_{i})_{W} &  \\
	&  	(R'_{i})_{W}
\end{psmallmatrix} g_i^{-1} = \begin{psmallmatrix}
(R_{i})_{W} &  \\
&  	(R'_{i})_{W}
\end{psmallmatrix} ^{-1}$, respectively, where $1 \leq i \leq t$.

Let $s$ denote the number of Jordan blocks of the form $\{\mathrm{J}(\mu, 4k+2)\}$, $\mu \in \{-1, + 1\}$, and $t$ denote the number of pairs of the form $ \{ \mathrm{J}(\lambda, 2m+1),\mathrm{J}(\lambda^{-1}, 2m+1)\} $, $ \lambda \notin \{- 1, +1\}$, 
in the Jordan form $A$. Note that if  condition $(\ref{cond-1-thr-strong-SL(n, C)})$ of \thmref{thr-main-strong-rev-SL(n, C)} holds or $s=t=0$, then using \remref{rem-construction-rev-inv} and  \propref{prop-str-rev-subform-SL},  we can construct an involution  $h \in \mathrm{SL}(n,\C)$ such that $hAh^{-1} =A^{-1}$.

Now, assume that $A$ does not satisfy condition $(\ref{cond-1-thr-strong-SL(n, C)})$ of \thmref{thr-main-strong-rev-SL(n, C)} (i.e., both ${\O}_{\d(p)}$ and  ${\O}_{\d(q)}$  are empty) and $s+t \neq 0$, otherwise we are done.
Using  \propref{prop-det-reverser-inv-unipotent-SL(n, C)}, \propref{prop-det-reverser-inv-SL(n, C)-eigenvalue-minus-1}, \propref{prop-det-rev-inv-non-unipotent-Weyr}, and Equation \eqref{eq-reverser-block-Weyr-matrices-str-rev-general}, we have 
$$\mathrm{det}(g) = (-1)^{\lvert \E_{{\d}(p)}^2 \rvert}  \, (-1)^{\lvert \E_{{\d}(q)}^2 \rvert}  \, \prod_{i=1}^{\ell}(-1)^{m_i} = (-1)^{\lvert \E_{{\d}(p)}^2 \rvert+ \lvert \E_{{\d}(q)}^2 \rvert + \sum_{i=1}^{\ell}m_i},$$
where $p+q + 2  \sum_{i=1}^{\ell}m_i =n$.  Since  $g \in \mathrm{SL}(n,\C)$, 
$\mathrm{det}(g) =(-1)^{\lvert \E_{{\d}(p)}^2 \rvert+ \lvert \E_{{\d}(q)}^2 \rvert +  \sum_{i=1}^{\ell}m_i} =1.$
This implies that $$ \lvert \E_{{\d}(p)}^2 \rvert+ \lvert \E_{{\d}(q)}^2 \rvert +  \sum_{i=1}^{\ell}m_i \hbox{ is even}.$$
Since $s =  \lvert \E_{{\d}(p)}^2 \rvert+ \lvert \E_{{\d}(q)}^2 \rvert$, we get $$s +  \sum_{i=1}^{\ell}m_i \hbox{ is even}.$$
This implies that $s+t$ is even, and thus $A$ satisfies condition $(\ref{cond-2-thr-strong-SL(n, C)})$ of \thmref{thr-main-strong-rev-SL(n, C)}. Therefore, the forward direction of the theorem is proven.

Conversely, up to conjugacy, we can assume that $A$ is in Jordan form as given by Equation \eqref{eq-block-Jordan-matrices-str-rev-general}. If $A$ satisfies at least one of the conditions $(\ref{cond-1-thr-strong-SL(n, C)})$ and $(\ref{cond-2-thr-strong-SL(n, C)})$ of \thmref{thr-main-strong-rev-SL(n, C)}, then using \remref{rem-construction-rev-inv} and  \propref{prop-str-rev-subform-SL}, we can construct a suitable involution $g$ in $\mathrm{SL}(n,\C)$ such that $gAg^{-1} = A^{-1}$. Therefore, $A$ is strongly reversible in $\mathrm{SL}(n,\C)$. This completes the proof. 
\qed

\subsection*{Acknowledgment} 
The authors thank the referee for the careful reading, helpful comments, and suggestions.
Gongopadhyay is partially supported by the SERB core research grant CRG/2022/003680. Lohan acknowledges the financial support from the IIT Kanpur Postdoctoral Fellowship. Maity is partially supported by the Seed Grant IISERBPR/RD/OO/2024/23.


\begin{thebibliography}{AAAA} 
	
\bibitem[Bu]{burton}  D. M. Burton,  {\it Elementary number theory},  7th ed., McGraw-Hill, New York, 2011, ISBN: 978-0-07-338314-9.
	
\bibitem[BM]{bm} A. Basmajian and B. Maskit, Space form isometries as commutators and products of involutions, {\it Trans. Amer. Math. Soc.}  364 (2012),  no. 9,  5015--5033, MR2922617.

\bibitem[BR]{BR} M. Baake and J. A. G. Roberts, The structure of reversing symmetry groups, {\it Bull. Austral. Math. Soc.} 73 (2006), 445--459, MR2230652. 



\bibitem[Dj]{Dj} D. Ž. Djoković,  Product of two involutions, \textit{Arch. Math. (Basel)} 18 (1967), 582--584, MR0219550.


\bibitem[Go]{Go}  R. Gow, Products of two involutions in classical groups of characteristic 2, \textit{J. Algebra} 71 (1981), no. 2, 583--591, MR0630620.


\bibitem[GHR]{GHR} W.  H.  Gustafson,  P.  R. Halmos  and H.  Radjavi,  Products of involutions,  { \it Linear Algebra Appl.} 13 (1976),  no. 1--2, 157--162, MR0399284.

\bibitem[GLM]{GLM}   K.  Gongopadhyay,  T.  Lohan and C.  Maity,  Reversibility of affine transformations, \textit{Proc. Edinb. Math. Soc. (2)}   66 (2023), no. 4, 1217--1228, MR4679222.


\bibitem[GLR]{GLR}  I. Gohberg,   P. Lancaster and L. Rodman, {\it Invariant Subspaces of Matrices with Applications},  John Wiley,  New York, 1986; republication SIAM,  Philadelphia, 2006, MR2228089.

\bibitem[GM]{GM} K. Gongopadhyay, C. Maity,  Reality of unipotent elements in classical Lie groups, \textit{Bull. Sci. math.} 185 (2023), Paper No. 103261, 29 pp., MR4583694.


\bibitem[GS]{GS} N. Gill and  A. Singh, Real and strongly real classes in $\mathrm{SL}_{n}(q)$, \textit{J. Group Theory} 14 (2011), no. 3, 437--459, MR2794377.


\bibitem[HP]{HP} F. Hoffman and E. C.  Paige,  Products of two involutions in the general linear group, \textit{Indiana Univ. Math. J.} 20 (1970/71), 1017--1020, MR0280508.

\bibitem[KN]{KN} F.  Knüppel and K. Nielsen,   SL(V) is 4-reflectional,  \textit{Geom. Dedicata} 38 (1991), no. 3,  301--308, MR1112667.



\bibitem[Li]{Li} K. M.	Liu,  Decomposition of matrices into three involutions, \textit{ Linear Algebra Appl.} 111 (1988), 1--24, MR0974042.

\bibitem[OCV]{COV}     K. C. O'Meara, J.  Clark and C. I. Vinsonhaler, 
\textit{Advanced topics in linear algebra: Weaving matrix problems through the Weyr form}, Oxford University Press, Oxford, 2011, MR2849857.

\bibitem[OS]{OS} 
A.  G. O'Farrell and  I.  Short, {\it Reversibility in Dynamics and Group Theory}, London Mathematical Society Lecture Note Series, vol.416, Cambridge University Press, Cambridge, 2015, MR3468569.


\bibitem[Pa1]{Pa1} 	C. de S.  Pazzis, Products of involutions in the stable general linear group, \textit{J. Algebra} 530 (2019), 235--289, MR3943377.

\bibitem[Pa2]{Pa2} C. de S.  Pazzis, Products of involutions of an infinite-dimensional vector space, \textit{Canad. J. Math.} 73 (2021), no. 1, 195--220, MR4201539.


\bibitem[PS]{PS} A. T. Paras and J. R. Salinasan,  The product of an involution and a skew-involution, \textit{Linear Algebra Appl}. 584 (2020), 431--437, MR4014740.


\bibitem[RV]{RV} A. Roche and  C. R. Vinroot,  A factorization result for classical and similitude groups, \textit{Canad. Math. Bull.} 61 (2018), no. 1, 174--190, MR3746483.

\bibitem[Sh1]{Sh1} H. Shapiro, The Weyr Characteristic, \textit{American Mathematical Monthly} 106 (1999),  919--929, MR1732500.

\bibitem[Sh2]{Sh2}  H. Shapiro, \textit{Linear algebra and matrices: Topics for a second course.} Pure and Applied Undergraduate Texts, 24. American Mathematical Society, Providence, RI, 2015, MR3408220.

\bibitem[ST]{ST}  A. Singh and M. Thakur,  Reality properties of conjugacy classes in algebraic groups, {\it Israel J. Math.} 165 (2008), 1--27, MR2403612.


\bibitem[Wo]{W} M.  J. Wonenburger, Transformations which are products of two involutions. {\it J. Math. Mech}  16 (1966) 327--338, MR0206025.


\end{thebibliography}
\end{document}